\documentclass{amsart}
\usepackage{amssymb,amsmath,amsfonts,amsthm}
\usepackage{graphicx}
\graphicspath{ {./images/} }

\usepackage{cleveref}

\usepackage{psfrag}
\usepackage{mathtools}
\usepackage{color}
\usepackage{todonotes}
\usepackage{enumitem}
\usepackage{mathrsfs}

\usepackage{showlabels}

\theoremstyle{plain}
\newtheorem{main}{Theorem}

\newtheorem{assumption}{Assumption}
\newtheorem{maincor}[main]{Corollary}
\newtheorem{theorem}{Theorem}[section]
\newtheorem{lemma}[theorem]{Lemma}
\newtheorem{proposition}[theorem]{Proposition}

\theoremstyle{remark}
\newtheorem{remark}[theorem]{Remark}
\newtheorem{definition}{Definition}

\newcommand\numberthis{\addtocounter{equation}{1}\tag{\theequation}}

\newcommand{\diam}{\operatorname{diam}}
\newcommand{\ess}{\operatorname{ess}}

           \def\ea{\end{array}}
          \def\ec{\end{center}}
     \def\ed{\end{description}}
        \def\ee{\end{equation}}
       \def\eea{\end{eqnarray}}
     \def\eeaa{\end{eqnarray*}}
 \def\et{\end{thebibliography}}

\def\ra{\rightarrow}

\def\cA{{\mathcal A}}

\def\cC{{\mathcal C}}
\def\cO{{\mathcal O}}

\def\cB{{\mathcal B}}

\def\ln{\operatorname{ln}}
\def\TT{{\mathbb T}}
\def\II{{\mathbb{I}}}
\def\RR{{\mathbb R}}
\def\PP{{\mathbb P}}

\def\ZZ{{\mathbb Z}}
\def\NN{{\mathbb N}}

\def\vp{{\varphi}}

\newcommand{\tom}{{\tilde{\Omega} }}

\newcommand{\tmu}{{\tilde{\mu} }}

\newcommand{\tT}{{\tilde{T} }}
\newcommand{\tU}{{\tilde{U} }}

\def\bM{{\bf{M}}}

\providecommand{\floor}[1]{\left\lfloor \, #1 \, \right\rfloor}

\title[Escape rate and conditional escape rate]{Escape rate and conditional escape rate from a probabilistic point of view}

\author{C Davis}
\thanks{C Davis, Department of Mathematics, University of Oklahoma,
Norman, 73019. E-mail: {\tt {cdavis@math.ou.edu}}.} 
\author{N Haydn}
\thanks{N Haydn, Department of Mathematics, University of Southern California,
Los Angeles, 90089-2532. E-mail: {\tt {nhaydn@usc.edu}}.} 
\author{F Yang}
\thanks{F Yang, Department of Mathematics, University of Oklahoma,
Norman, 73019. E-mail: {\tt {fan.yang-2@ou.edu}}.} 
\date{\today}

\setcounter{tocdepth}{1}
\begin{document}

\begin{abstract}
We prove that for a sequence of nested sets $\{U_n\}$ with $\Lambda = \cap_n U_n$ a measure zero set, the localized escape rate converges to the extremal index of $\Lambda$, provided that the dynamical system is $\phi$-mixing at polynomial speed. We also establish the general equivalence between  the local escape rate for entry times and the local escape rate for returns.

\end{abstract}

\maketitle
\tableofcontents

\section{Introduction}

In recent years there has been an increasing interest in open dynamical systems,
which are dynamical systems with an invariant measure where one places a trap or
hole in the phase space, and looks at the decay rate of the measure of points that
are not caught by the trap up to some time (the survival set). This rate is known to be related to the rate of the correlations decay for the system (see~\cite{KL}). When the correlations decay exponentially fast, the decay rate for the measure of the survival set is typically exponential , and 
depends on the location and size of the trap. We invite the readers to the review article~\cite{DY} for a general overview on this topic. 

When the decay rate for the measure of the survival set is normalised by
the measure of the trap, one obtains the localized escape rate as the measure
of the trap goes to zero. Such problems are related to the entry times and return
times distribution but  this similarity does not allow to deduce 
limiting statistics from each other since the limits are taken in different ways, and the rate of convergence for the entry times to its limiting distribution is usually insufficient for the study of escape rates.

In the past local escape rates have been associated with either metic holes centred at 
a point whose radius decreased to zero or, in the presence of partitions, with 
cylinder sets which decrease to a single point. In this case a dichotomy has been 
established for many systems which shows the local escape rate to be equal
to one at non-periodic points and equal to the extremal index at periodic points. See the classical work~\cite{FP} for conformal attractors,~\cite{BDT,DT} for the transfer operator approach for interval maps, and~\cite{HY} for a probabilistic approach which applies to systems in higher dimension.
This mirrors the behaviour of the  limiting return times distributions that are 
Poisson at non-periodic points, and P\'olya-Aeppli compound Poisson
at periodic points in which case the compounded geometric distribution has the 
weights given by the extremal index $\theta \in (0,1)$. See~\cite{HV09}.

In this paper we will generalise the concept of localized escape rates to the cases when
the limiting set of the shrinking neighbourhoods are not any longer points, periodic or
non-periodic, but instead are allowed to be any null sets. 
Recently such a more general 
setting was used to obtain the limiting return times distribution at null sets 
which turn out to be compound Poisson in a more general sense where the 
associated parameters form a spectrum that is determined by the limiting cluster
size distribution (for this see below the coefficients $\alpha_k$). For more detail on this topic, see~\cite{HV19} and~\cite{FY19}. 

Unlike the conventional transfer operator method studied in~\cite{BDT} following the general setup in~\cite{KL} where exponential decay of correlations is required, our approach in this paper only assumes the system to be $\phi$-mixing at polynomial speed (surprisingly, this is enough to deduce that the escape rate is exponential!). This assumption may still not be optimal. However, we believe that the same results does not hold for $\alpha$-mixing systems in general. See the counter examples in~\cite{BDT} for systems modeled by Young's towers with polynomial tail, and note that such systems are $\alpha$-mixing at the same rate.

\section{Statement of results}\label{s.2}
Throughout this paper, we will assume that $(\bM, T, \cB, \mu)$ is a measure preserving system on some compact metric space $\bM$, with $\cB$ the Borel $\sigma$-algebra. Unless otherwise specified, $T$ is assumed to be non-invertible, although our result also holds in the invertible case, see Remark~\ref{r.1} and Theorem~\ref{t.leftmixing},~\ref{t.leftmixing.open} below. Let $\mathcal{A}$ be a measurable partition of 
$\bM$ (finite or countable). Denote, by $\mathcal{A}^n=\bigvee_{j=0}^{n-1}T^{-j}\mathcal{A},$  its $n$th join (in the invertible case, see Remark~\ref{r.1}). Then $\mathcal{A}^n$ is a partition of $\bM$ and its elements are
called $n$-cylinders. We assume that $\mathcal{A}$ is generating, that is $\bigcap_nA_n(x)$ consists of the 
singleton $\{x\}$.

\begin{definition} (i) The measure $\mu$ is {\em left $\phi$-mixing} with respect to  $\mathcal{A}$
if
$$
|\mu(A \cap T^{-n-k} B) - \mu(A)\mu(B)| \le \phi_L(k)\mu(A)
$$
for all $A \in \sigma(\mathcal{A}^n)$, $n\in\mathbb{N}$ and  $B \in \sigma(\bigcup_{j\ge1} \mathcal{A}^j )$,
where $\phi(k)$ is a decreasing function which converges to zero as $k\to\infty$. Here $\sigma(\mathcal{A}^n)$ is the $\sigma$-algebra generated by $n$-cylinders.\\
(ii) The measure $\mu$ is {\em right $\phi$-mixing} w.r.t.\  $\mathcal{A}$
if
$$
|\mu(A \cap T^{-n-k} B) - \mu(A)\mu(B)| \le \phi_R(k)\mu(B)
$$
for all $A \in \sigma(\mathcal{A}^n)$, $n\in\mathbb{N}$ and $B \in \sigma(\bigcup_j \mathcal{A}^j )$,
where $\phi(k)\searrow0$.\\
(iii) The measure $\mu$ is {\em $\psi$-mixing} w.r.t.\  $\mathcal{A}$
if
$$
|\mu(A \cap T^{-n-k} B) - \mu(A)\mu(B)| \le \psi(k)\mu(A)\mu(B)
$$
for all $A \in \sigma(\mathcal{A}^n)$, $n\in\mathbb{N}$ and $B \in \sigma(\bigcup_j \mathcal{A}^j )$,
where $\psi(k)\searrow0$. Clearly $\psi$-mixing implies both left and right $\phi$-mixing with $\phi(k)  = \psi(k)$.\\
\end{definition}

\begin{remark}
	If it is clear from context which type of mixing we are referring to (as is always the case in this paper), then we will suppress the subscripts $R,L.$
\end{remark}

We write, for any subset $U\subset \bM$, 
$$
\tau_U(x)=\min\{j\ge 1: T^j(x)\in U\}
$$
the first entry time to the set $U$. Then $\tau_U|_U$ is the first return time for points in $U$. 
We can now define the {\em escape rate} into $U$ by
$$
\rho(U)=\lim_{t\to\infty}\frac1t|\log\mathbb{P}(\tau_U>t)|
$$
whenever the limit exists. It captures the exponential decay rate for the set of points whose orbits have not visited $U$ before time $t$.
Observe that if $U\subset U'$ then $\mathbb{P}(\tau_{U'}>t)\le\mathbb{P}(\tau_{U}>t)$
and consequently $\rho(U)\le\rho(U')$. We define the {\em conditional escape rate} as
$$
\rho_U(U)=\lim_{t\to\infty}\frac1t|\log\mathbb{P}_U(\tau_U>t)|,
$$
where $\mathbb{P}_U$ is the conditioned measure on $U$.

We are particularly interested in the asymptotic behavior of $\rho(U)$ along a sequence of positive measure sets $\{U_n\}$ whose measure goes zero. For this purpose, we call $\{U_n\}$ a {\em nested sequence of sets} if $U_{n+1}\subset U_n$, and if $\lim_n\mu(U_n) = 0$. For the measure zero set $\Lambda = \cap_n U_n$, we define the {\em localized escape rate at $\Lambda$} as:
\begin{equation}\label{e.escaperate}
\rho(\Lambda, \{U_n\})=\lim_{n\to\infty}\frac{\rho(U_n)}{\mu(U_n)}.
\end{equation}
provided that the limit exists. We will show that under certain conditions, the localized escape rate of $\Lambda$ exists and does not depend on the choice of $U_n$. 

\subsection{Local escape rate for unions of cylinders}
First, we consider with the case where each $U_n$ is a union of $\kappa_n$-cylinders, for some non-decreasing sequence of integers $\{\kappa_n\}$.

We make some assumptions on the sizes of the nested sequence $\{U_n\}$.  For each $n$ and $j\ge 1$, we define $\cC_j(U_n) =\{A\in\cA^j,A\cap U_n\ne\emptyset\}$ the collection of all $j$-cylinders that have non-empty intersection with $U_n$.
Then, we write
$$
U_n^j = \bigcup_{A\in\cC_j(U_n)} A
$$
for the approximation of $U_n$ by $j$-cylinders from outside. For each fixed $j$, $\{U_n^j\}_n$ is also nested, that is, $U_{n+1}^j\subset U_n^j$. Obviously we have $U_n\subset U_n^j$ for all $j$, and $U_n=U_n^j$ if $j\ge \kappa_n$. 

\begin{definition}\label{d.2}
	A nested sequence $\{U_n\in\cA^{\kappa_n}\}$ is called a {\em good neighborhood system}, if: 
	\begin{enumerate}
		\item $\kappa_n\nearrow\infty$ and $\kappa_n\mu(U_n)^\varepsilon\to 0$ for some $\varepsilon\in(0,1)$;
		\item there exists $C>0$ and $p'>1$ such that $\mu(U^j_n)\le \mu(U_n) + Cj^{-p'}$ for all $j<\kappa_n$.
	\end{enumerate}
\end{definition}

\subsubsection{Local escape rate and the extremal index}
We make the following definitions, following ~\cite{HV19}

For a positive measure set $U\subset\bM$, we define the higher order entry times recursively:
$$
\tau^1_U=\tau_U, \mbox{ and }\tau^j_U(x) = \tau^{j-1}_U(x) + \tau_U(T^{\tau^{j-1}_U}(x)).
$$
For simplicity, we write $\tau^0_U = 0$ on $U$.

For a nested sequence $\{U_n\}$, define
\begin{equation}\label{e.hatalpha}
\hat\alpha_\ell(K,U_n)=\mu_{U_n}(\tau^{\ell-1}_{U_n}\le K),
\end{equation}
i.e. $\hat\alpha_\ell(K,U_n)$ is the conditional probability of having at least $(\ell-1)$ returns to $U_n$ before time $K$. We shall assume
that the limit $\hat\alpha_\ell(K)=\lim_{n\to\infty}\hat\alpha_\ell(K,U_n)$ exists for for $K\in \NN$ large enough
and every $\ell\ge 1$. By monotonicity the limits 
\begin{equation}\label{e.3}
\hat\alpha_\ell=\lim_{K\to\infty}\hat\alpha_\ell(K)
\end{equation} 
exist as $\hat\alpha_\ell(K)\le \hat\alpha_\ell(K')\le1$ for all $K\le K'$.
 Since we put $\tau^0_U = 0$, it follows that $\hat{\alpha}_1 = 1$.
We will see later that the existence of the limits defining $\hat\alpha_{\ell}$ implies the existence of the following limits:
\begin{equation}\label{e.4}
\alpha_1 = \lim_{K\to\infty}\lim_{n\to\infty}\mu_{U_n}(\tau_{U_n}>K) = 1-\hat\alpha_2.
\end{equation}
$\alpha_1\in[0,1]$ is generally known as the {\em extremal index} (EI). See the discussion in Freitas et al~\cite{FFT13}. 

The next theorem shows that the escape rate is indeed given by the extremal index.

\begin{main}\label{m.2}
Assume that $T:\bM\to \bM$ preserves a probability measure $\mu$ that is right $\phi$-mixing  with $\phi(k)\le Ck^{-p}$ for some  $C>0$ and $p>1$, and $\{U_n\}$ is a good neighborhood system such that $\{\hat{\alpha}_{\ell}\}$ defined in~\eqref{e.3} exists, and satisfies $\sum_{\ell}\ell\hat\alpha_{\ell}<\infty$. 

Then $\alpha_1$ defined by~\eqref{e.4} exists, 
and the localized escape rate at $\Lambda$ exists and satisfies
$$
\rho(\Lambda, \{U_n\}) = \alpha_1.
$$
\end{main}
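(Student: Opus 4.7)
The first assertion, existence of $\alpha_1$, is immediate from the hypothesis: at $\ell=2$ the limit $\lim_n\mu_{U_n}(\tau_{U_n}>K) = 1-\hat\alpha_2(K)$ exists, and monotonicity in $K$ yields $\alpha_1 = 1-\hat\alpha_2 \in [0,1]$.

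For $\rho(\Lambda,\{U_n\}) = \alpha_1$ the plan has two stages. Stage~1 uses right $\phi$-mixing to reduce $\rho(U_n)$ to a single-scale tail estimate. Choose a block length $N_n\to\infty$ with $N_n\mu(U_n)\to 0$ and a gap $g_n$ satisfying $\phi(g_n) = o(N_n\mu(U_n))$ and $\kappa_n+g_n = o(N_n)$; this is possible because condition~(1) of Definition~\ref{d.2} bounds $\kappa_n$ by a small negative power of $\mu(U_n)$ and $\phi(k)\le Ck^{-p}$ permits a choice such as $g_n\sim(N_n\mu(U_n))^{-1/p}$. Since $\{\tau_{U_n}>s\}\in\sigma(\cA^{s+\kappa_n})$, right $\phi$-mixing combined with the bound $(\kappa_n+g_n)\mu(U_n)$ for the event of a visit inside the gap gives
\begin{equation*}
\mathbb{P}(\tau_{U_n}>s+\kappa_n+g_n+r) = \mathbb{P}(\tau_{U_n}>s)\,\mathbb{P}(\tau_{U_n}>r) + O\bigl(\phi(g_n) + (\kappa_n+g_n)\mu(U_n)\bigr).
\end{equation*}
Iterating this $k\approx t/N_n$ times and letting $t\to\infty$ establishes both the existence of $\rho(U_n)$ and the asymptotic $\rho(U_n) = -N_n^{-1}\log\mathbb{P}(\tau_{U_n}>N_n) + o(\mu(U_n))$.

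Stage~2 evaluates $\mathbb{P}(\tau_{U_n}\le N_n)$ by a Palm identity. With $\xi_n = \sum_{j=1}^{N_n}\ind_{U_n}\circ T^j$,
\begin{equation*}
N_n\mu(U_n) = \mathbb{E}[\xi_n] = \mathbb{E}[\xi_n\mid\tau_{U_n}\le N_n]\,\mathbb{P}(\tau_{U_n}\le N_n),
\end{equation*}
so identifying the limit of the conditional expectation yields the tail. Conditioning on $\tau_{U_n}=j$, $\xi_n$ counts visits from the first-hit point $T^j(x)\in U_n$; stationarity and mixing pin down the first-hit distribution as concentrating on \emph{cluster entries}, not on a $\mu_{U_n}$-uniform visit, so the expected future visit count converges to the mean limiting cluster size and not to its size-biased version $\sum_\ell\hat\alpha_\ell$. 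A direct computation from the return-time probabilities defining $\hat\alpha_\ell$---specifically the identity $\hat\alpha_\ell - \hat\alpha_{\ell+1} = \alpha_1\,\mathbb{P}(\text{cluster size}\ge\ell)$, obtained by reading the left-hand side as the $\mu_{U_n}$-probability of exactly $\ell-1$ within-cluster returns---telescopes to $\alpha_1\cdot(\text{mean cluster size}) = \hat\alpha_1 = 1$. Hence $\mathbb{E}[\xi_n\mid\tau_{U_n}\le N_n]\to 1/\alpha_1$, so $\mathbb{P}(\tau_{U_n}\le N_n) = \alpha_1 N_n\mu(U_n)(1+o(1))$, and combining with Stage~1 and $-\log(1-x) = x(1+o(1))$ as $x\to 0$ delivers $\rho(U_n)/\mu(U_n)\to\alpha_1$.

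The main obstacle will be quantifying the Stage~2 identification uniformly in $n$: the convergences $\hat\alpha_\ell(K)\to\hat\alpha_\ell$ are assumed only with $K$ fixed, yet the Palm identity forces a cutoff $K_n\to\infty$ at a rate compatible with $N_n\mu(U_n)\to 0$ and with polynomial $\phi$-mixing. The summability hypothesis $\sum\ell\hat\alpha_\ell<\infty$ is precisely what keeps the truncation error $\sum_{\ell\ge K_n}\hat\alpha_\ell$ vanishing uniformly, and condition~(2) of Definition~\ref{d.2}, $\mu(U_n^j)\le\mu(U_n)+Cj^{-p'}$, provides the cylinder approximation needed to apply right $\phi$-mixing to the events $\{T^jx\in U_n\}$ without losing the polynomial error rate. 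Threading the three scales---$p$ from the $\phi$-mixing, $p'$ from the cylinder approximation, and the cluster-size tail---through the iterated block decomposition so that every accumulated error is $o(N_n\mu(U_n))$ is where the technical bulk of the proof resides.
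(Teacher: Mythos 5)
Your Stage~1 coincides, up to notation, with the paper's Lemma~\ref{l.mainlemma} (which itself goes through Lemma~\ref{l.blocks}): choose $\Delta_n\sim\mu(U_n)^{-b}$ and $s_n\sim\mu(U_n)^{-(1-a)}$ so that $\phi(\Delta_n-\kappa_n)^{\eta_n}$ and $\Delta_n\mu(U_n)^{\eta_n}$ are both $o(s_n\mu(U_n))$, iterate the quasi-subadditivity of $\PP(\tau_{U_n}>t)$, and conclude $\rho(U_n)=s_n^{-1}\PP(\tau_{U_n}\le s_n)(1+o(1))$. Your choices of $N_n,g_n$ are admissible variants of this.

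Stage~2, however, is a genuinely different organizing principle from the paper's Lemma~\ref{l.3}. The paper decomposes $\{\tau_{U_n}\le s_n\}$ into the disjoint events $D_q$ (``last block of length $K$ containing a hit is block $q$''), shows $\mu(D_q)=\mu(V_q)(1+\mathcal{O}^*(F))$ by bounding the three error terms I, II, III with right $\phi$-mixing and the cylinder-approximation hypothesis $\mu(U_n^j)\le\mu(U_n)+Cj^{-p'}$, and sums; the computation is anchored on the fixed-$K$ identity $\PP(\tau_{U_n}\le K)=\alpha_1K\mu(U_n)(1+\mathcal{O}^*(\varepsilon))$ borrowed from HV19 Lemma~3. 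Your Palm identity $N_n\mu(U_n)=\EE[\xi_n\mid\tau_{U_n}\le N_n]\,\PP(\tau_{U_n}\le N_n)$ instead routes everything through the conditional mean visit count and makes the ``$1/\alpha_1$ equals mean cluster size'' interpretation of Theorem~\ref{t.1} explicit and structural, rather than letting it emerge from summing error-corrected block probabilities. That is a cleaner conceptual picture.

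I would nevertheless resist the suggestion that this route is lighter. Establishing $\EE[\xi_n\mid\tau_{U_n}\le N_n]\to 1/\alpha_1$ requires: (i) showing that given $\tau_{U_n}\le N_n$ the probability of a second cluster in $[1,N_n]$ vanishes --- this needs the gap/mixing argument, though the crude bound $\PP(\tau_{U_n}\le N_n)\le N_n\mu(U_n)$ does break the circularity you should worry about here; (ii) identifying the law of the first-hit point $T^{\tau_{U_n}}x\in U_n$, conditioned on $\tau_{U_n}\le N_n$, as the cluster-entry distribution rather than $\mu_{U_n}$ --- you flag this correctly, and it is exactly where the size-biasing content of Theorem~\ref{t.1} (equivalently HV19 Theorem~2) enters, so you are not bypassing that result but repackaging it; and (iii) replacing the fixed-$K$ limits $\hat\alpha_\ell(K)$ by the $N_n$-scale count, which is precisely where the assumption $\mu(U_n^j)\le\mu(U_n)+Cj^{-p'}$ and the polynomial tail of $\phi$ must both be threaded through, as in the paper's bound on term III. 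In short: the two proofs consume the same mixing estimates and the same good-neighborhood-system hypotheses; the difference is whether one carries the accumulated error through a probability sum over blocks or through a conditional expectation. Either way, your Stage~2 paragraph is a plan, not yet a proof, and the three items above are the parts that would need to be written out.

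Your treatment of the existence of $\alpha_1$ is correct and matches the paper's remarks following~\eqref{e.3}.
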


\begin{remark}
Theorem~\ref{m.2} has a similar formulation for left $\phi$-mixing systems. See Remark~\ref{rem:bothsides} and Theorem~\ref{t.leftmixing}, \ref{t.leftmixing.open} for more detail.
\end{remark}

For Gibbs-Markov systems (for the precise definition, see Section~\ref{s.3}) the same result is true:

\begin{main}\label{m.gibbsmarkov}
	Assume that $T:\bM \to \bM$ is a Gibbs-Markov system with respect to the partition $\cA$. Let $\{U_n\}$ be a good neighborhood system such that $\{\hat{\alpha}_{\ell}\}$ defined in~\eqref{e.3} exists, and satisfies $\sum_{\ell}\ell\hat\alpha_{\ell}<\infty$. 
	
	Then $\alpha_1$ defined by~\eqref{e.4} exists. Furthermore, the localized escape rate at $\Lambda$ exists and satisfies
	$$
	\rho(\Lambda, \{U_n\}) = \alpha_1.
	$$
\end{main}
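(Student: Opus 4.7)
The natural plan is to derive Theorem~\ref{m.gibbsmarkov} as a direct consequence of Theorem~\ref{m.2}, by verifying that every Gibbs--Markov system is right $\phi$-mixing with respect to its defining partition $\cA$ at an exponential rate, which is a fortiori a polynomial rate $Ck^{-p}$ for any $p>1$.

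The input is the standard transfer-operator theory for Gibbs--Markov maps: bounded distortion together with the big-image property forces the Perron--Frobenius operator $\cL$ of $\mu$ to have a spectral gap on the appropriate space of locally Lipschitz observables, giving $\|\cL^k f-\mu(f)\|\le C\lambda^k\|f\|$ for some $C>0$, $\lambda\in(0,1)$. Translating this into correlation estimates, one takes the test function $f=\mathbf 1_A/\mu(A)$ for $A\in\sigma(\cA^n)$ and pairs with $\mathbf 1_B$ for $B\in\sigma(\bigcup_{j\ge1}\cA^j)$; bounded distortion controls the Lipschitz norm of $\mathbf 1_A/\mu(A)$ uniformly in the cylinder level $n$, so one obtains
$$
\bigl|\mu(A\cap T^{-n-k}B)-\mu(A)\mu(B)\bigr|\le C\lambda^{k}\mu(A)\mu(B),
$$
which is $\psi$-mixing and in particular right $\phi$-mixing with $\phi(k)=C\lambda^k\le Ck^{-p}$ for every $p>1$.

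Once this verification is in place, all hypotheses of Theorem~\ref{m.2} are met for the given nested system $\{U_n\}$: the good neighborhood assumptions on $\{U_n\}$, the existence of $\hat\alpha_\ell$ from \eqref{e.3}, and the summability $\sum_\ell\ell\hat\alpha_\ell<\infty$ are all carried over unchanged. Theorem~\ref{m.2} then delivers the existence of $\alpha_1$ and the desired identity $\rho(\Lambda,\{U_n\})=\alpha_1$.

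The main obstacle I anticipate is making the $\phi$-mixing estimate uniform across all cylinder levels $n$: because $A$ ranges over $\sigma(\cA^n)$ for arbitrarily large $n$, the Lipschitz-type norm of $\mathbf 1_A/\mu(A)$ must not grow with $n$. This is precisely what bounded distortion of a Gibbs--Markov system is designed to deliver, but some care is required at cylinder boundaries and, when $\cA$ is countably infinite, in ensuring that the big-image property produces constants independent of the cylinder chosen. An alternative route would be to prove Theorem~\ref{m.gibbsmarkov} directly by exploiting the Markov structure in place of mixing, but this would essentially duplicate the argument of Theorem~\ref{m.2} and appears less economical.
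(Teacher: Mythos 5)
Your strategy differs from the paper's. The paper does not establish right $\phi$-mixing for Gibbs--Markov systems and then invoke Theorem~\ref{m.2}; instead it proves a parallel lemma (Lemma~\ref{l.3'}) in which the one place Lemma~\ref{l.3} genuinely uses right $\phi$-mixing --- the estimate of the short-range correlation term III in~\eqref{e.III} --- is replaced by a direct cylinder computation using the distortion bound and the big image property. The other term II is handled with left $\phi$-mixing alone, which is the mixing property the paper records for Gibbs--Markov systems (see the proof of Lemma~\ref{l.3'} and Remark~\ref{rem:bothsides}).

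The specific justification you offer for $\psi$-mixing has a gap. The claim that ``bounded distortion controls the Lipschitz norm of $\mathbf{1}_A/\mu(A)$ uniformly in the cylinder level $n$'' is false: in the metric $d_\lambda(x,y)=\lambda^{s(x,y)}$, the locally Lipschitz seminorm of $\mathbf{1}_A$ for an $n$-cylinder $A$ is of order $\lambda^{-(n-1)}$, so $\mathbf{1}_A/\mu(A)$ has Lipschitz norm that blows up in $n$, and distortion does nothing to cure this. The correct argument applies the transfer operator $\cL^n$ to $\mathbf{1}_A$ first: since $T^n|_A$ is injective and $T^nA$ is a union of partition elements, $\cL^n\mathbf{1}_A$ is supported on $T^nA$ and, by the Gibbs bound and the distortion estimate, its sup norm and its locally Lipschitz seminorm are both of order $\mu(A)$, uniformly in $n$. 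Only then does the spectral gap applied to the remaining $k$ iterates give $\|\cL^{n+k}\mathbf{1}_A-\mu(A)\|_\infty\lesssim\eta^k\mu(A)$, hence $|\mu(A\cap T^{-n-k}B)-\mu(A)\mu(B)|\lesssim\eta^k\mu(A)\mu(B)$.

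With that correction your route does close and gives a clean reduction of Theorem~\ref{m.gibbsmarkov} to Theorem~\ref{m.2}, at the cost of invoking the transfer-operator spectral gap for countable-alphabet Gibbs--Markov maps. The paper's Lemma~\ref{l.3'} makes essentially the same distortion/big-image computation but only at the one place it is needed, namely $\mu(U_n\cap T^{-j}U_n)\lesssim\mu(U_n)\mu(U_n^j)$, which keeps the proof elementary and self-contained; this is why the two theorems are proved in parallel rather than one being reduced to the other.
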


\begin{remark}\label{r.1}
	If $T$ is invertible, then one has to define the $n$-join by
	$$
	\mathcal{A}^n=\bigvee_{j=-n}^{n}T^{-j}\mathcal{A}. 
	$$
	In this case it is useful to write, for $m,n\in \ZZ$, $\cA^n_m: = \bigvee_{j=m}^{n}T^{-j}\mathcal{A}$. In particular we have  $\cA^n = \cA_{-n}^n$.
	The $\phi$ and $\psi$-mixing properties are defined by the same formulas. For integers $m,m',n,n'>0$,  if $U\in \cA^n_{-m}, V \in \cA^{n'}_{-m'}$ then for $k>n+m'$  we have $\mu(U\cap T^{-k} V) = \mu(T^{-m}U \cap T^{-k-m} V)$ where $T^{-m}U\in \cA_0^{m+n}$, $T^{-k-m}V \in \cA_{k+m-m'}^{k+m+n'}$. Note that $k+m-m'>n+m > 0$, so the estimate can be treated in the same way as the non-invertible case with only
	minor adjustments. However, the approximation $U^j_n$ need to be handled with care. See Remark~\ref{r.invertible} for a potential problem and the treatment. 
\end{remark}

\subsubsection{In the absence of short returns}
We will see below that when points in $\{U_n\}$ do not return to $U_n$ ``too soon'', then $\alpha_1 = 1$. To formulate this, we define the {\em periodic of $U$} as:
$$
\pi(U) = \min\{k>0: T^{-k}U\cap U\ne\emptyset\},
$$
and the {\em essential periodic of $U$} by:
$$
\pi_{\ess}(U) = \min\{k>0: \mu(T^{-k}U\cap U)>0\}.
$$
$\pi$ and $\pi_{\ess}$ mark the shortest return of points in $U$.
Note that $\pi(U)\le \pi_{\ess}(U)$ for all measurable $U\in\bM$, and equality holds if $T$ is continuous, $U$ is open and $\mu$ has full support.


\begin{maincor}\label{mc.0}
	Let $(\bM, T, \cB, \mu)$ be a measure preserving system. Assume that $\{U_n\}$ is a good neighborhood system with $\pi_{\ess}(U_n) \to \infty$, and $(T,\mu,\cA)$ satisfies one of the following two assumptions:
	\begin{enumerate}
		\item either $\mu$ is right $\phi$-mixing with $\phi(k)\le Ck^{-p}$ for some $p>1$;
		\item or $T$ is Gibbs-Markov;
	\end{enumerate}
	then the localized escape rate at $\Lambda$ exists and satisfies
	$$
	\rho(\Lambda, \{U_n\}) = 1.
	$$
\end{maincor}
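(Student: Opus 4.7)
The plan is to deduce Corollary~\ref{mc.0} directly from Theorem~\ref{m.2} (under hypothesis~(1)) or Theorem~\ref{m.gibbsmarkov} (under hypothesis~(2)): both yield $\rho(\Lambda,\{U_n\}) = \alpha_1$ as soon as their hypotheses are met. Since $\{U_n\}$ is already a good neighborhood system and the $\phi$-mixing / Gibbs--Markov assumptions are built into~(1) and~(2), the only thing I need to verify is the existence of the cluster spectrum $\{\hat\alpha_\ell\}$ together with the summability $\sum_\ell \ell\hat\alpha_\ell<\infty$, and then to compute that $\alpha_1 = 1$.

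The key step is that the essential period controls short returns in the most direct possible way. Writing
\begin{equation*}
\{x\in U_n : \tau_{U_n}(x)\le K\} \;=\; \bigcup_{k=1}^{K}\bigl(U_n\cap T^{-k}U_n\bigr),
\end{equation*}
the inequality $\pi_{\ess}(U_n) > K$ forces every set on the right to be $\mu$-null, hence $\mu_{U_n}(\tau_{U_n}\le K) = 0$. The hypothesis $\pi_{\ess}(U_n)\to\infty$ therefore gives, for each fixed $K$,
\begin{equation*}
\hat\alpha_2(K,U_n) \;=\; \mu_{U_n}\bigl(\tau^1_{U_n}\le K\bigr) \;=\; 0
\end{equation*}
for all $n$ sufficiently large, so $\hat\alpha_2(K) = 0$, and taking $K\to\infty$ in~\eqref{e.3} gives $\hat\alpha_2=0$. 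For $\ell\ge 3$ the trivial monotonicity $\tau^{\ell-1}_{U_n} \ge \tau^{1}_{U_n}$ yields $\hat\alpha_\ell(K,U_n) \le \hat\alpha_2(K,U_n) = 0$, so every $\hat\alpha_\ell$ with $\ell\ge 2$ also vanishes. Combined with the convention $\hat\alpha_1=1$, the spectrum $\{\hat\alpha_\ell\}$ exists and is trivially summable: $\sum_\ell \ell\hat\alpha_\ell = 1 < \infty$.

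Substituting into~\eqref{e.4} produces $\alpha_1 = 1 - \hat\alpha_2 = 1$, and an application of Theorem~\ref{m.2} or Theorem~\ref{m.gibbsmarkov} (depending on whether~(1) or~(2) is in force) concludes that $\rho(\Lambda,\{U_n\}) = \alpha_1 = 1$. I do not expect any substantive obstacle: the corollary is essentially a clean specialization of the main theorems, and the only mildly delicate point is the order of limits — one must first fix $K$, use $\pi_{\ess}(U_n)\to\infty$ to send $n\to\infty$, and only then let $K\to\infty$, precisely as in the definition of the spectrum.
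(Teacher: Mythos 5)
Your proof is correct and matches the paper's own argument exactly: the paper simply invokes Lemma~\ref{l.3.1} (which says $\pi_{\ess}(U_n)\to\infty$ forces $\hat\alpha_\ell=0$ for all $\ell\ge 2$) and then applies Theorem~\ref{m.2} or~\ref{m.gibbsmarkov}, whereas you reprove that lemma inline, but the reasoning, the order of limits, and the conclusion $\alpha_1=1$ are identical.
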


As an immediate corollary, we have:

\begin{maincor}\label{mc.1}
	The conclusion of Corollary~\ref{mc.0} holds if the assumption ``$\pi_{\ess}(U_n)\to\infty$'' is replaced by the following assumptions:
	\begin{enumerate}
		\item $T$ is continuous,  $\Lambda = \cap_nU_n = \cap_n \overline{U}_n$;
		\item $\Lambda$ intersects every forward orbit at most once, that is, for every $x\in \Lambda$ we have $\Lambda\cap \{T^k(x) : k\ge 1\}=\emptyset$. 
	\end{enumerate}
\end{maincor}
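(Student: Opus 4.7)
The plan is to reduce directly to Corollary~\ref{mc.0}. Specifically, I would show that assumptions (1) and (2) of Corollary~\ref{mc.1} force $\pi_{\ess}(U_n)\to\infty$; once this is established, the conclusion $\rho(\Lambda,\{U_n\})=1$ is immediate from Corollary~\ref{mc.0} (both under the right $\phi$-mixing and the Gibbs-Markov hypothesis).

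The first step is a monotonicity observation: since $\{U_n\}$ is nested, the inclusion $T^{-k}U_{n+1}\cap U_{n+1}\subset T^{-k}U_n\cap U_n$ shows that every $k$ for which $\mu(T^{-k}U_n\cap U_n)=0$ also satisfies $\mu(T^{-k}U_{n+1}\cap U_{n+1})=0$. Hence $\{\pi_{\ess}(U_n)\}_n$ is non-decreasing, so either $\pi_{\ess}(U_n)\to\infty$ (in which case there is nothing more to do) or the sequence stabilises at some finite integer $K\ge 1$ for all $n\ge n_0$.

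Suppose we are in the latter case, aiming for a contradiction. For every $n\ge n_0$ we then have $\mu(T^{-K}U_n\cap U_n)>0$, and in particular $T^{-K}U_n\cap U_n\ne\emptyset$. Passing to closures and using continuity of $T$ so that $T^{-K}\overline{U}_n$ is closed, the sets $T^{-K}\overline{U}_n\cap\overline{U}_n$ form a decreasing family of nonempty closed subsets of the compact space $\bM$. By the finite intersection property their intersection is nonempty; since preimages commute with arbitrary intersections, hypothesis (1) identifies this intersection with $T^{-K}\Lambda\cap\Lambda$. Any $x$ in this set then satisfies $x\in\Lambda$ and $T^K(x)\in\Lambda$ with $K\ge 1$, which contradicts hypothesis (2) that $\Lambda$ meets each forward orbit at most once.

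The only genuinely nontrivial ingredient is the compactness / finite-intersection step, and this is precisely where hypothesis (1)---that $\Lambda$ coincides with the intersection of the \emph{closures} $\overline{U}_n$, not merely with $\bigcap_n U_n$---becomes essential, since without it one could not commute the intersection with the closure operation. I do not anticipate any further obstacle: the argument is in this sense a clean topological upgrade of Corollary~\ref{mc.0}, trading the direct metric condition $\pi_{\ess}(U_n)\to\infty$ for the more geometric conditions (1) and (2).
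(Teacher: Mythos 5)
Your argument is correct and follows the same reduction as the paper: show that hypotheses (1)--(2) force $\pi_{\ess}(U_n)\to\infty$, then invoke Corollary~\ref{mc.0}. Where the paper simply quotes \cite[Proposition~6.3]{FY19} (stated for $\pi$) together with the inequality $\pi(U)\le\pi_{\ess}(U)$, you supply a self-contained finite-intersection/compactness proof working directly with $\pi_{\ess}$ --- a valid and slightly more elementary reproof of that cited step, but not a different strategy.
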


\subsection{From cylinders to open sets: exceedance rate for the extreme value process}
Next, we deal with the case where  $\{U_n\}$ consists of open sets. For this purpose, we consider a continuous observable
$$\vp:\bM\to\RR \cup \{+\infty\},$$
such that the maximal value of $\vp$, which could be positive infinite, is achieved on a $\mu$  measure zero closed set $\Lambda$. Then we consider the process generated by the dynamics of $T$ and the observable $\varphi:$
$$
X_0=\varphi,\,X_1=\varphi\circ T,\,\ldots, X_k = \varphi\circ T^k
,\ldots.$$
Let $\{u_n\}$ be a non-decreasing sequence of real numbers. We will think of $u_n$ as a sequence of thresholds, and the event $\{X_k>u_n\}$
marks an exceedance above the threshold $u_n$. Also denote by $U_n$ the open set
\begin{equation}\label{e.Un}
U_n: = \{X_0>u_n\}.
\end{equation}
It is clear that $\{U_n\}$ is a nested sequence of sets. 

We are interested in the set of points where $X_k(x)$ remains under the threshold $u_n$ before time $t$. For this purpose, we put
$$
M_n = \max\{X_k: k=0,\ldots, n-1\},
$$
and
$$
\zeta(u_n) = \lim_{t\to\infty}\frac1t |\log \PP(M_t<u_n)|.
$$ 
Finally, define the {\em exceedance rate of $\vp$ along the thresholds $\{u_n\}$} as:
$$
\zeta(\vp, \{u_n\})=\lim_{n\to\infty}\frac{\zeta(u_n)}{\mu(U_n)}.
$$

We will make the following assumption on the shape of $U_n$. For each $r_n>0$, we approximate 
$U_n$ by two open sets (`o' and `i' stand for `outer' and `inner'):
$$
U^o_n 
=B_{r_n}(U_n)
= \bigcup_{x\in U_n} B_{r_n}(x), \mbox{\,\, and \,\,} 
U^i_n 
=U_n\setminus \overline{B_{r_n}(U_n)}
= U_n\setminus \left(\bigcup_{x\in \partial U_n}\overline{B_{r_n}(x)}\right).
$$
It is easy to see that 
$$
\overline{U^i_n}\subset U_n\mbox{ and } \overline{U_n}\subset  U^o_n,
$$

The following assumption requires $U_n$ to be {\em well approximable} by $U^{i/o}_n$. 
\begin{assumption}\label{a.1}
There exists a positive, decreasing sequence of real numbers $\{r_n\}$ with $r_n\to0$, such that
\begin{equation}\label{e.5}
\mu\left(U^o_n\setminus U^i_n\right) = o(1)\mu(U_n).
\end{equation}
\end{assumption}
Here $o(1)$ means the term goes to zero under the limit $n\to\infty$.

\begin{main}\label{m.3}
Assume that 
\begin{enumerate}
	\item either $\mu$ is right $\phi$-mixing with $\phi(k)\le Ck^{-p}$, $p>1$;
	\item or $(T,\mu,\cA)$ is a Gibbs-Markov system.
\end{enumerate}
Let $\vp:\bM\to \RR\cup\{+\infty\}$ be a  continuous function achieving its maximum on a measure zero set
 $\Lambda$. Let $\{u_n\}$ be a non-decreasing sequence of real numbers with $u_n\nearrow \sup \vp$, 
 such that the open sets $U_n$ defined by~\eqref{e.Un} satisfy Assumption~\ref{a.1} for a sequence
 $r_n$ that decreases to $0$. 
 Assume  $\{\hat{\alpha}_{\ell}\}$, defined by~\eqref{e.3}, exist and satisfy $\sum_{\ell}\ell\hat{\alpha}_{\ell}<\infty$.  
 Let $\kappa_n$ be the smallest positive integer for which $\diam\cA^{\kappa_n}\le r_n$ and  assume:
\begin{enumerate}[label=(\alph*)]
\item $\kappa_n\mu(U_n)^\varepsilon\to 0$ for some $\varepsilon \in (0,1)$;
\item $U_n$ has small boundary: there exist $C>0$ and $p'>1$, such that \\
$\mu\left(\bigcup_{A\in\cA^j, A\cap  B_{r_n}(\partial U_n) \ne\emptyset}A\right)\le C j^{-p'}$ for all $n$ and $j\le\kappa_n$.
\end{enumerate}
Then the exceedance rate of $\vp$ along $\{u_n\}$ exists and satisfies
$$
\zeta(\vp,\{u_n\}) = \alpha_1.
$$
\end{main}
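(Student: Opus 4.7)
The plan is to reduce the open-set case of Theorem~\ref{m.3} to the cylinder cases (Theorem~\ref{m.2} or~\ref{m.gibbsmarkov}) by sandwiching each $U_n$ between two unions of $\kappa_n$-cylinders. First observe that the exceedance rate equals the escape rate: the identity $\mathbb{P}(M_{t+1}<u_n)=(1-\mu(U_n))\,\mathbb{P}(\tau_{U_n}>t\,|\,x\notin U_n)$ shows that $\mathbb{P}(M_t<u_n)$ and $\mathbb{P}(\tau_{U_n}>t)$ differ by a factor bounded in $t$ for each fixed $n$, so $\zeta(u_n)=\rho(U_n)$ and it suffices to show $\rho(U_n)/\mu(U_n)\to\alpha_1$.

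Define the outer and inner cylinder approximations
\[
V_n=\bigcup\{A\in\cA^{\kappa_n}:A\cap U_n\ne\emptyset\},\qquad W_n=\bigcup\{A\in\cA^{\kappa_n}:A\subset U_n\}.
\]
Because $\diam\cA^{\kappa_n}\le r_n$, any $\kappa_n$-cylinder meeting $U_n$ lies inside $B_{r_n}(U_n)=U_n^o$, and any $\kappa_n$-cylinder in $V_n\setminus W_n$ contains points $y\in U_n$ and $z\in U_n^c$ with $d(y,z)\le r_n$; the connected ball $B_{r_n}(y)$ then meets both $U_n$ and $U_n^c$, hence intersects $\partial U_n$, forcing $y\in B_{r_n}(\partial U_n)$, so the cylinder lies in $U_n^o\setminus U_n^i$. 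Assumption~\ref{a.1} then gives $\mu(V_n\setminus W_n)\le\mu(U_n^o\setminus U_n^i)=o(\mu(U_n))$, whence $\mu(V_n)=\mu(W_n)(1+o(1))=\mu(U_n)(1+o(1))$.

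I next verify that $\{V_n\}$ and $\{W_n\}$ are good neighborhood systems. Condition (1) of Definition~\ref{d.2} follows from hypothesis (a) together with the preceding measure comparison. For condition (2) applied to $\{V_n\}$, note that $V_n^j=U_n^j$ for every $j\le\kappa_n$, so the required bound reduces to $\mu(U_n^j\setminus U_n^{\kappa_n})\le Cj^{-p'}$; adapting the connectedness argument above, any $j$-cylinder contributing to this set must meet $B_{r_n}(\partial U_n)$, and hypothesis (b) then supplies the bound. The inner system $\{W_n\}$ is handled identically. Moreover, since $\mu(U_n\triangle V_n)=o(\mu(U_n))$ and the $K$-step return-time statistics for fixed $K$ change by at most $O(K)\mu(V_n\setminus U_n)/\mu(U_n)\to0$, the limits $\hat\alpha_\ell(K)$ and the summability $\sum_\ell\ell\hat\alpha_\ell<\infty$ are inherited by $V_n$ and $W_n$.

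Applying Theorem~\ref{m.2} (case (1)) or Theorem~\ref{m.gibbsmarkov} (case (2)) to $\{V_n\}$ and $\{W_n\}$ yields $\rho(V_n)/\mu(V_n)\to\alpha_1$ and $\rho(W_n)/\mu(W_n)\to\alpha_1$. The inclusion $W_n\subset U_n\subset V_n$ and the monotonicity of $\rho$ then give
\[
\frac{\rho(W_n)}{\mu(W_n)}\cdot\frac{\mu(W_n)}{\mu(U_n)}\;\le\;\frac{\rho(U_n)}{\mu(U_n)}\;\le\;\frac{\rho(V_n)}{\mu(V_n)}\cdot\frac{\mu(V_n)}{\mu(U_n)},
\]
and both outer ratios tend to $\alpha_1$, so the sandwich closes. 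The main obstacle is the geometric step linking the cylinder-level boundary $V_n^j\setminus V_n$ (for $j<\kappa_n$) to the metric boundary $B_{r_n}(\partial U_n)$: partition elements in dynamical systems are typically not path-connected and may have diameter much larger than $r_n$, so the clean connectedness-of-balls argument that works for $V_n\setminus W_n$ (where cylinders have diameter $\le r_n$) must be supplemented by the thickened-boundary control supplied by hypothesis (b).
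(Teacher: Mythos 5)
Your proposal is correct and follows essentially the same plan as the paper's proof: sandwich $U_n$ between the inner and outer $\kappa_n$-cylinder approximations, verify that these are good neighborhood systems with the same cluster parameters $\hat\alpha_\ell$, apply the cylinder results (Theorem~\ref{m.2} or~\ref{m.gibbsmarkov}), and close the squeeze via monotonicity of $\rho$ --- the last step is exactly what the paper packages as Proposition~\ref{p.approx}. Two harmless deviations: you derive the invariance of $\hat\alpha_\ell(K)$ under the $o(1)\mu(U_n)$ perturbation directly, whereas the paper cites \cite[Lemma~5.6]{FY19}; and you replace the measure identity $\mathbb{P}(M_t<u_n)=\mathbb{P}(\tau_{U_n}>t)$ (which holds by $T$-invariance) with an equivalent conditioning calculation. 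As for the obstacle you flag at the end: the link between $V_n^j\setminus V_n$ and $B_{r_n}(\partial U_n)$ does not actually require the $r_n$-scale connected-ball argument at the $j$-cylinder level; it only needs that any $j$-cylinder meeting both $U_n$ and $U_n^c$ must intersect $\partial U_n$ (and therefore $B_{r_n}(\partial U_n)$), which holds whenever $j$-cylinders are connected. That implicit topological hypothesis is the full content of the ``geometric step,'' and once it is granted hypothesis (b) supplies the measure bound; so your hedged closing remark is in fact a complete resolution rather than an open gap, and it is the same assumption the paper uses silently in the displayed inequality for $\mu(V_n^j)$.
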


\subsection{Conditional escape rate: a general theorem}
Our next theorem deals with the relation between the escape rate and the conditioned escape rate. 
\begin{main}\label{m.4}
For any measure preserving system $(\bM, T, \cB, \mu)$ and any positive measure set $U\in\bM$, we have 
$$
\rho(U) = \rho_U(U),
$$
assuming one of them exists.
\end{main}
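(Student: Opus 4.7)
The plan is to establish a single exact identity between $\mathbb{P}(\tau_U>t)$ and $\mathbb{P}_U(\tau_U>t)$, and then translate it into a comparison of exponential decay rates. Write $M_t:=\mu(\{\tau_U>t\})$ and $a_t:=\mu(U\cap\{\tau_U>t\})=\mu(U)\,\mathbb{P}_U(\tau_U>t)$. Observing that $\{\tau_U>t\}=T^{-1}\{y:T^jy\notin U,\ 0\le j\le t-1\}$, the $T$-invariance of $\mu$ gives
\[
M_t=\mu\bigl(U^c\cap\{\tau_U>t-1\}\bigr)=M_{t-1}-a_{t-1},
\]
so $a_t=M_t-M_{t+1}$, and telescoping yields the companion identity $M_t=\sum_{s\ge t}a_s+\mu(\{\tau_U=\infty\})$. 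For the sketch I assume $\mu(\{\tau_U=\infty\})=0$, which is automatic whenever $\mu$ is ergodic with $\mu(U)>0$; the opposite case forces $\rho(U)=0$ and must be handled separately.

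The easy inequality $a_t\le M_t$ gives $|\log a_t|\ge|\log M_t|$, hence $\rho_U(U)\ge\rho(U)$ whenever both limits exist. The reverse inequality rests on the crucial (but immediate) observation that $t\mapsto a_t$ is non-increasing, since $U\cap\{\tau_U>t+1\}\subset U\cap\{\tau_U>t\}$. Combined with the identity $\sum_{s=t}^{t+k-1}a_s=M_t-M_{t+k}$, monotonicity yields for every $k\ge 1$
\[
k\,a_t\ge\sum_{s=t}^{t+k-1}a_s=M_t-M_{t+k},
\qquad\text{hence}\qquad
a_t\ge\frac{M_t-M_{t+k}}{k}.
\]
Choosing $k=k(t)$ to be the smallest integer with $M_{t+k}\le M_t/2$ converts this into $a_t\ge M_t/(2k(t))$, giving
\[
\frac{|\log a_t|}{t}\le\frac{|\log M_t|}{t}+\frac{\log(2k(t))}{t}.
\]

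Assuming $\rho(U)$ exists and is positive, the two-sided bound $e^{-(\rho(U)+\varepsilon)t}\le M_t\le e^{-(\rho(U)-\varepsilon)t}$ valid for $t$ large forces $k(t)=O(t)$, so $\log k(t)/t\to 0$ and the last display yields $\rho_U(U)\le\rho(U)$, which combined with the easy direction gives equality. For the dual hypothesis, if $\rho_U(U)$ exists and is positive, bounding $a_s\le\mu(U)e^{-(\rho_U(U)-\varepsilon)s}$ for $s$ large and summing the geometric tail in $M_t=\sum_{s\ge t}a_s$ produces the matching inequality $\rho(U)\ge\rho_U(U)$. The main technical point I expect to need care with is the estimate $k(t)=O(t)$ on the halving time of $M_t$, together with the corresponding treatment of the vanishing-rate edge case $\rho(U)=0$; once it is in hand the non-increasing property of $a_t$ together with the telescoping identity forces the two rates to coincide, with no mixing or regularity hypothesis required.
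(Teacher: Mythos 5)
Your argument is correct and mirrors the paper's own proof: both hinge on the exact telescoping identity $\mu(U)\,\mathbb{P}_U(\tau_U>t)=\mathbb{P}(\tau_U>t)-\mathbb{P}(\tau_U>t+1)$ together with the monotonicity of $t\mapsto\mu(U\cap\{\tau_U>t\})$, and your halving time $k(t)$ plays the same role as the paper's fixed scale $\gamma n$ (for $\gamma>1$) in converting a rate for $\mathbb{P}(\tau_U>t)$ into one for its first difference. The vanishing-rate edge case you flag is likewise left untreated in the paper, whose supporting arithmetic lemma is explicitly restricted to a positive rate $\vartheta>0$.
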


Note that this theorem does not rely on the mixing assumption nor any information on the shape of $U$. In particular, if one defines the {\em localized conditional escape rate at $\Lambda$} as
$$
\rho_\Lambda(\Lambda, \{U_n\}) = \lim_{n\to\infty}\frac{\rho_{U_n}(U_n)}{\mu(U_n)},
$$
then we immediately have $\rho(\Lambda, \{U_n\}) = \rho_{\Lambda}(\Lambda, \{U_n\})-\alpha_1$ under the assumptions of Theorem~\ref{m.2} or~\ref{m.gibbsmarkov}.

\subsection{Escape rate on Young's towers with exponential tail}
Young's towers, also known as the Gibbs-Markov-Young structure, is first introduced by Young in~\cite{Y2} and~\cite{Y3}. Young's tower can be viewed as a discrete time suspension over a
Gibbs-Markov system $(\tom, \tT, \tmu)$, such that the roof function $R$ (in this case, it is usually
call the {\em return time function}) is integrable with respect to the measure $\tmu$. 

\begin{main}\label{m.5}
	Assume that $T$ is a $C^{2}$ map modeled by Young's tower, such that the return time function $R$ has exponential tail: there exists $\lambda\in (0,1)$ such that 
	$$
	\tmu(R>n) \lesssim \lambda^n.
	$$

	Let $\{U_n\subset \tom\}$ be a nested sequence of sets satisfying the assumption of Theorem~\ref{m.gibbsmarkov} in the cylinder case, or Theorem~\ref{m.3} in the open set case. Then the localized escape rate at $\Lambda=\cap_n U_n$ exists and satisfies 
	$$
	\rho(\Lambda,\{U_n\}) =\alpha_1.
	$$
\end{main}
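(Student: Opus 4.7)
My plan is to reduce everything to the Gibbs-Markov base $(\tom, \tT, \tmu)$ over which Young's tower is built, then invoke Theorem~\ref{m.gibbsmarkov} or Theorem~\ref{m.3} there. Since the sets $U_n \subset \tom$ already satisfy the hypotheses of the relevant theorem directly on the base, applying it to $(\tom, \tT, \tmu)$ will give
$$\lim_{n\to\infty}\frac{\tilde\rho(U_n)}{\tmu(U_n)} = \alpha_1,$$
where $\tilde\rho(U_n) = \lim_{k\to\infty}\frac{1}{k}|\log \tmu(\ttau_{U_n} > k)|$ is the escape rate computed via the induced map $\tT$.

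The second step will be to pass from the $\tT$-escape rate back to the $T$-escape rate. For $x \in \tom$ the hitting times of $x$ to $U_n \subset \tom$ under $T$ occur only at the successive returns to $\tom$, so writing $S_k R(x) = \sum_{j=0}^{k-1} R(\tT^j x)$ one has the exact identity
$$\tau_{U_n}(x) = S_{\ttau_{U_n}(x)} R(x).$$
For a point starting outside $\tom$ one waits at most $R$ additional steps to reach $\tom$, a contribution negligible at the time scale $t$ we consider. Setting $\overline R = \int R \, d\tmu$, the assumption $\tmu(R > n) \lesssim \lambda^n$ will yield the uniform large-deviations estimate
$$\tmu\bigl(|S_k R - k\overline R| > \varepsilon k\bigr) \le e^{-c(\varepsilon) k}$$
for every $\varepsilon > 0$ and all $k$ sufficiently large, with $c(\varepsilon) > 0$ independent of $n$.

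Combining these ingredients I expect to obtain the sandwich
$$\tmu\bigl(\ttau_{U_n} > t/[(1-\varepsilon)\overline R]\bigr) - e^{-c(\varepsilon)t/\overline R} \le \PP(\tau_{U_n} > t) \le \tmu\bigl(\ttau_{U_n} > t/[(1+\varepsilon)\overline R]\bigr) + e^{-c(\varepsilon)t/\overline R},$$
after absorbing the ascent from arbitrary level $\ell$ into the $\varepsilon$-slack. Because $\tilde\rho(U_n)$ is controlled by $\tmu(U_n)\to 0$ via Theorem~\ref{m.gibbsmarkov}, the large-deviations error is of smaller order than the main term for $n$ large. Taking $\frac1t |\log \cdot|$ and then $\varepsilon \to 0$ yields $\rho(U_n) = \tilde\rho(U_n)/\overline R$. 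Since $U_n$ lies at level $0$ of the tower, the lifted invariant measure satisfies $\mu(U_n) = \tmu(U_n)/\overline R$, so the $\overline R$ factors cancel and
$$\frac{\rho(U_n)}{\mu(U_n)} = \frac{\tilde\rho(U_n)}{\tmu(U_n)} \longrightarrow \alpha_1.$$

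The main obstacle I foresee is the large-deviations comparison: one must ensure that the deviation probability for $S_k R$ around $k\overline R$ is negligible compared to $\tmu(\ttau_{U_n} > k)$ at the scale $k \sim t/\overline R$. This is precisely where the exponential tail of $R$ is essential, forcing $c(\varepsilon)$ to dominate $\tilde\rho(U_n)$ uniformly in $n$; a merely polynomial tail on $R$ would not suffice without additional input, and the counterexamples alluded to in the introduction suggest this is not a technicality but a genuine constraint.
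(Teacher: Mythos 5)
Your proposal follows essentially the same route as the paper. The paper formalizes your second step as a standalone Theorem~\ref{t.suspension}, which uses exactly the time-lifting identity $\tau_{U_n}(y) = -m + \sum_{j<\ttau_{\tilde U_n}(y_0)} R(\tT^j y_0)$ together with exponential large deviations for the Birkhoff sums of $R$ (derived from the exponential tail via~\cite{BDT}, Appendix B, as you anticipate) to show $\rho(\Lambda,\{U_n\}) = \tilde\rho(\tilde\Lambda,\{\tilde U_n\})$; Theorem~\ref{m.5} is then deduced by combining this with Theorem~\ref{m.gibbsmarkov} or~\ref{m.3} applied on the Gibbs-Markov base.
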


\subsection{Organization of the paper}
This paper is organized in the following way. 
In section~\ref{s.3} we then state some properties that surround the parameters of 
very short returns whose presence is unaffected by the Kac timescale. In section~\ref{s.4}
we then prove the main results for cylinder approximations of the zero measure target set $\Gamma$.
 One crucial result here is Lemma~\ref{l.3} which yields the 
extremal index for the near zero time limiting distribution for entry times (as opposed 
to return times). Those results are then used in section~\ref{s.5} to extend them to 
the case when the approximating sets are metric neighbourhoods. In section~\ref{s.6} we 
then provide a general argument which shows that the local escape rate for entry 
times is the same as the local escape rate for returns. In section~\ref{s.7} we then 
show that the local escape rate persists for the induced map. Section~\ref{s.8}
is dedicated to entertain us with examples.

\section{Preliminaries}\label{s.3}
\subsection{Return and entry times along a nested sequence of sets}
In this section we recall the general results in~\cite{HV19} on the number of entries to an arbitrary null set $\Lambda$ within a cluster.

Given a sequence of nested sets $U_n,n=1,2,\ldots$ with $U_{n+1}\subset U_n$, $\cap_n U_n=\Lambda$ and $\mu(U_n)\to 0$, we will fix a large integer $K>0$ (which will be sent to infinity later), and assume that the limit 
$$
\hat\alpha_\ell(K) = \lim_{n\to\infty}\mu_{U_n}(\tau^{\ell-1}_{U_n}\le K)
$$
exists for $K$ sufficiently large and for every $\ell\in\NN$. By definition
$\hat\alpha_\ell(K)\ge \hat\alpha_{\ell+1}(K)$ for all $\ell$, and $\hat\alpha_1(K)=1$ due to our choice of $\tau^0 = 0$ on $U$. Also note that $\hat\alpha_\ell(K)$ is non-decreasing in $K$ for every $\ell$. As a result, we have for every $\ell\ge 1$:
\begin{equation}\label{e.3.0}
\hat\alpha_\ell = \lim_{K\to\infty}\hat\alpha_\ell(K) \mbox{ exists for every }\ell, \mbox{ and } \hat\alpha_1=1,\hat\alpha_\ell\ge \hat\alpha_{\ell+1}.
\end{equation}

Note that in the definition of $\hat{\alpha}$, the cut-off for the short return time $K$ does not depend on the set $U_n$. 
Another way to study the short return properties for the nested sequence $U_n$ is to look at
\begin{equation}\label{e.beta}
\hat\beta_\ell = \lim_{n\to\infty}\mu_{U_n}(\tau_{U_n}^{\ell-1}\le s_n)
\end{equation}
for some increasing sequence of integers $\{s_n\}$, with $s_n\mu(U_n)\to 0$ as $n\to\infty$. This is the approach taken by Freitas et al in~\cite{FFRS}. It is proven that for many systems (including Gibbs-Markov systems and Young's towers with polynomial tails), we have $\hat\beta_\ell=\hat{\alpha}_{\ell}$. See~\cite[Proposition 5.4 and 6.2]{FY19}

To demonstrate the power of desynchronizing $K$ from $n$, recall that for any set $U$, the  essential periodic of $U$ is given by:
$$
\pi_{\ess}(U) = \min\{k>0: \mu(T^{-k}U\cap U)>0\}.
$$
Then the following lemma can be easily verified using the definition of $\hat{\alpha}$:

\begin{lemma}\label{l.3.1}
Let $\{U_n\}$ be a sequence of nested sets. Assume that $\pi_{\ess}(U_n)\to \infty$ as $n\to\infty$, then $\hat{\alpha}_\ell$ exists and equals zero for all $\ell\ge 2$. 
\end{lemma}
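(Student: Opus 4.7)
The plan is to unwind the definitions and exploit monotonicity of the higher entry times. Fix $\ell\ge 2$ and an arbitrary $K\in\NN$. Since the higher entry times $\tau^{j}_{U_n}$ are strictly increasing in $j$ by construction, we have $\tau_{U_n}^{\ell-1}\ge \tau_{U_n}^{1}=\tau_{U_n}$ whenever $\ell\ge 2$. Consequently
$$
\{x\in U_n: \tau_{U_n}^{\ell-1}(x)\le K\}\subset \{x\in U_n: \tau_{U_n}(x)\le K\},
$$
so it suffices to bound $\hat\alpha_2(K,U_n)$, i.e.\ to show that this quantity vanishes for all $n$ large.

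Next I would rewrite the event $\{x\in U_n: \tau_{U_n}(x)\le K\}$ as the union $\bigcup_{j=1}^{K}\bigl(U_n\cap T^{-j}U_n\bigr)$. By the very definition of the essential period, $\mu(U_n\cap T^{-j}U_n)=0$ for every $1\le j<\pi_{\ess}(U_n)$. Since the hypothesis gives $\pi_{\ess}(U_n)\to\infty$, there exists $n_0=n_0(K)$ such that $\pi_{\ess}(U_n)>K$ for all $n\ge n_0$. For such $n$ each set in the union has measure zero, hence
$$
\mu\bigl(U_n\cap\{\tau_{U_n}\le K\}\bigr)=0,
$$
which gives $\hat\alpha_2(K,U_n)=0$, and therefore $\hat\alpha_\ell(K,U_n)=0$ for every $\ell\ge 2$.

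Taking $n\to\infty$ shows that the limit $\hat\alpha_\ell(K)$ exists and equals $0$ for every $K$ and every $\ell\ge 2$. Sending $K\to\infty$ then yields $\hat\alpha_\ell=0$ for all $\ell\ge 2$ as claimed.

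There is no real obstacle here; the only thing worth flagging is the convention $\tau^0_U=0$ on $U$, which forces $\hat\alpha_1=1$ and would break the monotonicity step if $\ell=1$ were allowed. This is precisely why the statement restricts to $\ell\ge 2$.
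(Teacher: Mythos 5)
Your argument is essentially identical to the paper's: both reduce to the observation that $\{x\in U_n:\tau^{\ell-1}_{U_n}(x)\le K\}\subset\bigcup_{j=1}^{K}\bigl(U_n\cap T^{-j}U_n\bigr)$ and note that once $\pi_{\ess}(U_n)>K$ each term in this union has $\mu$-measure zero. The only cosmetic difference is that you first pass from general $\ell\ge2$ to $\ell=2$ via monotonicity of $\tau^j_{U_n}$, while the paper bounds $\mu_{U_n}(\tau^{\ell-1}_{U_n}\le K)$ directly by the same union.
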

\begin{proof}
For each $K$, one can take $n_0$ large enough such that $\pi_{\ess}(U_n)> K$ for all $n > n_0$. Then for $\ell \ge 2$, 
$$
\mu_{U_n}(\tau^{\ell-1}_{U_n}\le K)  \le \mu_{U_n} \left(\bigcup_{k=0}^K T^{-k}U_n\cap U_n\right) = 0
$$
since all the intersections have zero measure.
\end{proof}

Note that $\hat\alpha_\ell(K)$ is the conditional probability to have at least  $\ell-1$ returns in a cluster with length $K$. If we consider the level set:
\begin{equation}\label{e.alpha}
\alpha_\ell(K, U_n) = \mu_{U_n}(\tau^{\ell-1}_{U_n}\le K<\tau^\ell_{U_n}),
\end{equation}
and its limit
$$
\alpha_\ell(K)=\lim_{n\to\infty}\alpha_\ell(K, U_n),
$$
\begin{equation}\label{e.3.1}
\alpha_\ell = \lim_{K\to\infty}\alpha_\ell(K),
\end{equation}
then it is easy to see that 
$$
\alpha_\ell = \hat\alpha_\ell - \hat\alpha_{\ell+1},\mbox{ and so }\hat{\alpha}_\ell = \sum_{j\ge \ell}\alpha_j 
$$ 
which, in particular, implies the existence of $\alpha_\ell$. 

Next, following~\cite{HV19} we put for every integer $\ell >0$ and $K>0$, 
\begin{equation}\label{e.2}
\lambda_\ell(K,U_n) = \frac{\PP(\sum_{i=1}^{K}\II_{U_n}\circ f^i=\ell)}{\PP(\sum_{i=1}^{K}\II_{U_n}\circ f^i\ge 1)}.
\end{equation}
In other words, $\lambda_\ell(K,U_n)$ is, conditioned on having an entry to the set $U_n$, the probability to have precisely $\ell$ entries in a cluster with length $K$. The next theorem provides the relation between $\hat\alpha_\ell$ and $\lambda_\ell$:

\begin{theorem}\cite[Theorem 2]{HV19}\label{t.1}
	Assume that $U_n$ is a sequence of nested sets with $\mu(U_n)\to 0$. Assume that the limits in~\eqref{e.3.0} exist for $K$ large enough and every $\ell\ge 1$. Also assume that $\sum_{\ell=1}^{\infty}\ell\hat\alpha_\ell<\infty$.
	
	Then
	$$
	\lambda_\ell=\frac{\alpha_\ell - \alpha_{\ell+1}}{\alpha_1},
	$$ 
	where $\alpha_\ell = \hat\alpha_\ell -\hat\alpha_{\ell+1}$. In particular, the limit defining $\lambda_\ell$ exists. Moreover, the average length of the cluster of entries satisfies
	$$
	\sum_{\ell=1}^{\infty}\ell\lambda_\ell = \frac{1}{\alpha_1}.
	$$
\end{theorem}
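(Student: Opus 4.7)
The identity $\lambda_\ell=(\alpha_\ell-\alpha_{\ell+1})/\alpha_1$ admits a clean probabilistic interpretation: in the limit $n\to\infty$ with $K$ large but fixed, the visits of a typical orbit to $U_n$ inside $[1,K]$ organize into well-separated clusters, and $\lambda_\ell$ equals the cluster-size distribution. My plan is to (i) identify $\lambda_\ell$ with the cluster-size distribution $\pi_\ell$, (ii) express $\alpha_j$ as a size-biased functional of $\pi_\ell$, and (iii) invert to obtain $\pi_\ell$ explicitly.

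For step (i), I would fix a gap threshold $s_n\to\infty$ with $s_n\mu(U_n)\to 0$ and call a \emph{cluster} a maximal run of visit times whose successive gaps are $\le s_n$. The hypotheses that $\hat\alpha_\ell=\lim_K\hat\alpha_\ell(K)$ exists and $\sum_\ell\ell\hat\alpha_\ell<\infty$ ensure within-cluster return statistics stabilize and cluster size has finite mean, while right $\phi$-mixing (or Gibbs--Markov quasi-independence across gaps of length $\ge s_n$) makes different clusters within $[1,K]$ asymptotically independent; in the low-intensity regime the event of two or more clusters in $[1,K]$ has probability $O(\mu(U_n)^2)$, negligible against that of a single cluster. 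Writing $c_K\mu(U_n)$ for the expected number of clusters in $[1,K]$, this yields
\begin{equation*}
\PP(N_K=\ell)\;=\;c_K\mu(U_n)\pi_\ell+o(\mu(U_n)),\qquad \PP(N_K\ge 1)\;=\;c_K\mu(U_n)+o(\mu(U_n)),
\end{equation*}
so that $\lambda_\ell=\PP(N_K=\ell)/\PP(N_K\ge 1)\to\pi_\ell$.

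For step (ii), conditioning on a visit at time $0$ (i.e.\ on $x\in U_n$) samples a visit uniformly and is therefore a size-biased sample of clusters: the visit lies in a cluster of size $\ell$ with probability $\ell\pi_\ell/M$, where $M=\sum_\ell\ell\pi_\ell$, and its position is uniform on $\{1,\dots,\ell\}$; the number of subsequent visits in the cluster is $\ell-k$ for position $k$. Therefore
\begin{equation*}
\alpha_j\;=\;\lim_{K\to\infty}\mu_{U_n}(N_K=j-1)\;=\;\sum_{\ell\ge j}\frac{\ell\pi_\ell}{M}\cdot\frac{1}{\ell}\;=\;\frac{1}{M}\sum_{\ell\ge j}\pi_\ell.
\end{equation*}
Differencing in $j$ gives $\pi_\ell=M(\alpha_\ell-\alpha_{\ell+1})$, and summing $\sum_\ell\pi_\ell=1$ forces $M\alpha_1=1$, hence $M=1/\alpha_1$ and $\pi_\ell=(\alpha_\ell-\alpha_{\ell+1})/\alpha_1$. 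Combined with step (i), this delivers both advertised formulas.

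The main obstacle is rigorously establishing the cluster picture: one must pick $s_n$ simultaneously large enough that $\mu_{U_n}(N_{s_n}=\ell-1)\to\alpha_\ell$ and small enough that $s_n\mu(U_n)\to 0$ still permits $\phi$-mixing to decouple distinct clusters within $[1,K]$ with a quantitative error. The summability condition $\sum_\ell\ell\hat\alpha_\ell<\infty$ is precisely what is needed both to control the tails in the limits above and to guarantee $M<\infty$, so that the size-biasing identity is meaningful.
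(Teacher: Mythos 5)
Your algebra in steps (ii)--(iii) is correct and captures the right combinatorics: the size‑biasing identity $\alpha_j=\tfrac1M\sum_{\ell\ge j}\pi_\ell$ together with $\sum_\ell\pi_\ell=1$ does force $M=1/\alpha_1$ and $\pi_\ell=(\alpha_\ell-\alpha_{\ell+1})/\alpha_1$, and $\sum_\ell\ell\lambda_\ell=1/\alpha_1$ follows by Abel summation using $\sum_\ell\ell\hat\alpha_\ell<\infty$ (which ensures $\ell\alpha_\ell\to0$). The genuine gap is in step (i). Theorem~\ref{t.1} carries \emph{no mixing hypothesis} — it only assumes nestedness, $\mu(U_n)\to0$, existence of the limits in~\eqref{e.3.0}, and summability. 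Your identification $\lambda_\ell=\pi_\ell$ is built on decoupling distinct clusters in $[1,K]$ via right $\phi$‑mixing or Gibbs--Markov quasi‑independence, and on the claim that ``two or more clusters in $[1,K]$ has probability $O(\mu(U_n)^2)$''. Neither is available here: without a mixing assumption the second‑cluster probability need not be negligible relative to the first, and Palm‑style position‑uniformity inside a cluster needs more than stationarity once one conditions on cluster size. So as written the proof imports an extra hypothesis and the proposed formalization would not go through.

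The fix is that the identity is purely a consequence of stationarity and the double‑limit structure, and a cluster/Palm picture is not needed. Observe the pointwise identity
$$
\II\bigl(N_K\ge j\bigr)=\sum_{i=1}^{K}\II_{U_n}(T^i x)\cdot\II\bigl(\#\{k\in(i,K]:T^kx\in U_n\}=j-1\bigr),
$$
valid because the right side selects exactly the $(N_K-j+1)$‑th visit when $N_K\ge j$ and is empty otherwise. Taking expectation and using $T$‑invariance (the $i$‑th summand equals $\mu\bigl(T^{-i}\{y\in U_n:\tau^{j-1}_{U_n}(y)\le K-i<\tau^j_{U_n}(y)\}\bigr)=\mu(U_n)\,\alpha_j(K-i,U_n)$) gives
$$
\PP(N_K\ge j)=\mu(U_n)\sum_{m=0}^{K-1}\alpha_j(m,U_n),
$$
with no decoupling whatsoever. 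Letting $n\to\infty$ and then $K\to\infty$, a Ces\`aro argument (the finitely many small $m$ for which the limits in~\eqref{e.3.0} are not assumed wash out as $K\to\infty$) yields $\lim_K\lim_n\PP(N_K\ge j)/\PP(N_K\ge1)=\alpha_j/\alpha_1$, and differencing in $j$ gives the stated formula for $\lambda_\ell$. This direct route makes your size‑biasing intuition rigorous while using only the theorem's actual hypotheses; the summability $\sum_\ell\ell\hat\alpha_\ell<\infty$ is what justifies the final interchange of $K\to\infty$ with the sum over $\ell$ in $\sum_\ell\ell\lambda_\ell$.
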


For more properties on $\{\hat{\alpha}_\ell\}$, $\{\alpha_{\ell}\}$ and $\{\lambda_\ell\}$, we direct the readers to~\cite{HV19} and~\cite[Section 3]{FY19}.

\subsection{Gibbs-Markov systems}

A map $T:\bM\to\bM$ is called {\em Markov} if there is a countable measurable partition $\cA$ on $\bM$ with $\mu(A)>0$ for all $A\in \cA$, such that for all $A\in \cA$, $T(A)$ is injective and can be written as a union of elements in $\cA$. Write $\cA^n=\bigvee_{j=0}^{n-1}T^{-j}\cA$ as before, it is also assumed that $\cA$ is (one-sided) generating.

Fix any $\lambda\in(0,1)$ and define the metric $d_\lambda$ on $\bM$ by $d_\lambda(x,y) = \lambda^{s(x,y)}$, where $s(x,y)$ is the largest positive integer $n$ such that $x,y$ lie in the same $n$-cylinder. Define the Jacobian $g=JT^{-1}=\frac{d\mu}{d\mu\circ T}$ and $g_k = g\cdot g\circ T \cdots g\circ T^{k-1}$.

The map $T$ is called {\em Gibbs-Markov} if it preserves the measure $\mu$, and also satisfies the following two assumptions:\\
(i) The big image property: there exists $C>0$ such that $\mu(T(A))>C$ for all $A\in \cA$.\\
(ii) Distortion: $\log g|_A$ is Lipschitz for all $A\in\cA$.

In view of (i) and (ii), there exists a constant $D>1$ such that for all $x,y$ in the same $n$-cylinder, we have the following distortion bound:
$$
\left|\frac{g_n(x)}{g_n(y)}-1\right|\le D d_\lambda(T^nx,T^ny),
$$
and the Gibbs property:
$$
D^{-1}\le \frac{\mu(A_n(x))}{g_n(x)}\le D.
$$
It is well known (see, for example, Lemma 2.4(b) in~\cite{MN05}) that Gibbs-Markov systems are exponentially $\phi$-mixing, that is, $\phi(k)\lesssim \eta^k$ for some $\eta\in(0,1)$.\footnote{Here the notation $\lesssim$ means that the equality holds up to some constant $C>0$.}

\section{Escape rate for unions of cylinders}\label{s.4}
This section contains the proof of Theorem~\ref{m.2}, ~\ref{m.gibbsmarkov} and Corollary~\ref{mc.0},~\ref{mc.1}.  We will suppress the dependence of $\rho$ on $\{U_n\}$ and simply write $\rho(\Lambda)$  for the local escape rate at $\Lambda$.

\subsection{The block argument}
In this section, we will provide a general framework on the escape rate for polynomially $\phi$-mixing systems. The main lemma, which is Lemma~\ref{l.mainlemma}, allows us to reduce the escape rate (which is on the points that do not enter $U$ in a {\em large time-scale}) to the probability of having {\em short entries}.

First we introduce the following standard result for systems that are either left or right $\phi$-mixing. The proof can be found in~\cite{HY, A06}.

\begin{lemma}\cite[Lemma 4]{HY}
Assume that $\mu$ is either left or right $\phi$-mixing for the partition $\cA$. For $U\in\sigma(\cA^{\kappa_n})$, let $s,t>0$ and $\Delta<\frac{s}{2}$ then we have $$|\PP(\tau_U>s+t)-\PP(\tau_U>s)\PP(\tau_U>t)|\leq2(\Delta\mu(U)+\phi(\Delta-\kappa_n))\PP(\tau_U>t-\Delta).$$
\end{lemma}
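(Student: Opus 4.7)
The proof uses the standard ``gap--mixing'' technique: write $\{\tau_U>s+t\}$ as the intersection of an early event and a late event, introduce a buffer of width $\Delta$ between them so that the $\phi$-mixing hypothesis becomes applicable, and carefully control the approximation error picked up along the way.

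Start from the identity
\[
\{\tau_U>s+t\}=\{\tau_U>s\}\cap T^{-s}\{\tau_U>t\}.
\]
The early factor $\{\tau_U>s\}$ is $\sigma(\cA^{s+\kappa_n-1})$-measurable while the late factor depends only on coordinates from time $s+1$ onwards; since they abut without a gap, mixing cannot be applied directly. To create a gap, enlarge the early event to $\{\tau_U>s-\Delta\}\in\sigma(\cA^{s-\Delta+\kappa_n-1})$. The resulting over-count is
\[
\PP\bigl(\{s-\Delta<\tau_U\le s\}\cap T^{-s}\{\tau_U>t\}\bigr)=\sum_{j=s-\Delta+1}^{s}\PP(\tau_U=j,\,T^{-s}\{\tau_U>t\}),
\]
and after using $T$-invariance to shift each summand back by $j$, each one reduces to $\mu(U\cap T^{-(s-j)}\{\tau_U>t\})$. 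Since $s-j\in[0,\Delta-1]$, a window of at least $t-\Delta$ no-entries survives, so a union bound yields $\Delta\mu(U)\PP(\tau_U>t-\Delta)$ for this total error.

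Once the buffer is in place, the two approximated events are separated by a coordinate gap of $\Delta-\kappa_n$, and the $\phi$-mixing assumption (in either the left or right form) produces
\[
\bigl|\PP\bigl(\{\tau_U>s-\Delta\}\cap T^{-s}\{\tau_U>t\}\bigr)-\PP(\tau_U>s-\Delta)\PP(\tau_U>t)\bigr|\le \phi(\Delta-\kappa_n)\,\PP(\tau_U>t-\Delta),
\]
where we have upper-bounded the prefactor $\PP(\tau_U>t)$ (right case) or $\PP(\tau_U>s-\Delta)$ (left case, after placing the buffer on the late side via $T^{-s}\{\tau_U>t\}\subset T^{-(s+\Delta)}\{\tau_U>t-\Delta\}$) by $\PP(\tau_U>t-\Delta)$ using monotonicity of $\PP(\tau_U>\cdot)$. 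Finally, replacing $\PP(\tau_U>s-\Delta)$ with $\PP(\tau_U>s)$ costs an extra $\Delta\mu(U)\PP(\tau_U>t-\Delta)$ by the same shift-and-bound argument. Summing the three contributions with the triangle inequality yields
\[
|\PP(\tau_U>s+t)-\PP(\tau_U>s)\PP(\tau_U>t)|\le 2(\Delta\mu(U)+\phi(\Delta-\kappa_n))\PP(\tau_U>t-\Delta).
\]

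The main obstacle is the sharp bookkeeping in the approximation step: to retain the factor $\PP(\tau_U>t-\Delta)$ instead of the cruder bound $1$, one must decompose the approximation error according to the value of $\tau_U$ and invoke $T$-invariance before the union bound, so that the late-time no-entry condition is preserved in every summand. The left $\phi$-mixing case is handled symmetrically by introducing the buffer on the late side rather than the early side, leading to the same bound.
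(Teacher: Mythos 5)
Your overall decomposition---widen the early event from $\{\tau_U>s\}$ to $\{\tau_U>s-\Delta\}$, apply $\phi$-mixing across the resulting coordinate gap of $\Delta-\kappa_n$, and then swap $\PP(\tau_U>s-\Delta)$ back for $\PP(\tau_U>s)$---matches the strategy of the cited proof in \cite[Lemma 4]{HY}. But the estimate of the over-count contains a genuine gap. You assert that each term $\mu\bigl(U\cap T^{-(s-j)}\{\tau_U>t\}\bigr)$, $s-j\in[0,\Delta-1]$, is bounded by $\mu(U)\,\PP(\tau_U>t-\Delta)$. That is a product bound for the measure of an intersection, and set containment alone never supplies it: one only has $\mu(U\cap V)\le\min\{\mu(U),\mu(V)\}$, and $\mu(U\cap V)$ can vastly exceed $\mu(U)\mu(V)$ (take $U=V$). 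Decoupling the event ``in $U$ at time $0$'' from the subsequent window of $t-\Delta$ no-entries is itself a mixing statement, and your argument never invokes mixing at this step. The symptom shows in your final tally, where $\phi(\Delta-\kappa_n)$ appears with coefficient $1$, whereas the lemma's bound carries coefficient $2$; that second $\phi$-term is exactly the price of the mixing application you omitted.

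The remedy is not to decompose over $j$ at all. Keep the whole event $\{s-\Delta<\tau_U\le s\}$, which lies in $\sigma(\cA^{s+\kappa_n})$, and replace $T^{-s}\{\tau_U>t\}$ by the larger set $T^{-(s+\Delta)}\{\tau_U>t-\Delta\}$, opening a coordinate gap of $\Delta-\kappa_n$. Applying the mixing inequality to this pair gives
\[
\mu\bigl(\{s-\Delta<\tau_U\le s\}\cap T^{-s}\{\tau_U>t\}\bigr)\le\bigl(\Delta\mu(U)+\phi(\Delta-\kappa_n)\bigr)\PP(\tau_U>t-\Delta),
\]
which, combined with your other two contributions, yields the stated bound. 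Finally, in the left $\phi$-mixing case you bound the prefactor $\PP(\tau_U>s)$ by $\PP(\tau_U>t-\Delta)$; this requires $s\ge t-\Delta$ and fails otherwise, so the left-mixing variant as written is also incomplete.
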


Iterating the previous lemma, we obtain:
\begin{lemma}\label{l.blocks} 
Assume that $\mu$ is either left or right $\phi$-mixing for the partition $\cA$. Let $s>0$ and $\Delta<\frac{s}{2}$. Define $q=\floor{\frac{s}{\Delta}}$, $\eta=\frac{q}{q+1}$, and $\delta=2(\Delta\mu(U)+\phi(\Delta-\kappa_n)).$  Then for every $k\ge 3$ that is an integer multiple of $q^{-1},$ we have 
$$\PP(\tau_U>ks)\leq(\PP(\tau_U>s)+\delta^\eta)^{k-2}.
$$
\end{lemma}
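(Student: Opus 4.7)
The strategy is to iterate the two-term estimate from the preceding lemma. Combining that estimate with the monotonicity of $t\mapsto\PP(\tau_U>t)$ yields the single-step product form
\[
\PP(\tau_U>s+t)\le \bigl(\PP(\tau_U>s)+\delta\bigr)\,\PP(\tau_U>t-\Delta).
\]
I would peel off blocks of length $s$ from the total time $ks$ one at a time: each application of the single-step inequality contributes a factor of $\PP(\tau_U>s)+\delta$ and consumes $s+\Delta$ units of remaining time. Continuing until the remaining time falls below $s$, and then bounding the leftover survival probability by $1$, produces an upper bound of the form $(\PP(\tau_U>s)+\delta)^J$.

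The definition $q=\lfloor s/\Delta\rfloor$ forces $s+\Delta\le s(q+1)/q$, so the number of iterations satisfies
\[
J\ge \frac{ks}{s+\Delta}\ge \frac{kq}{q+1}=k\eta.
\]
The hypothesis that $k$ is an integer multiple of $q^{-1}$ makes $k\eta=kq/(q+1)$ a rational number with a controlled denominator, which is what allows one to bookkeep the iteration count cleanly; the hypothesis $k\ge 3$ is needed to absorb the two boundary corrections from the initial and terminal partial chunks into the $-2$ appearing in the target exponent.

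The final step, which I expect to be the most subtle, is the algebraic conversion of a bound of the form $(\PP(\tau_U>s)+\delta)^{k\eta}$ into the stated $(\PP(\tau_U>s)+\delta^\eta)^{k-2}$. The underlying trade-off is that enlarging the base by replacing $\delta$ with $\delta^\eta\ge\delta$ (which follows from $\delta\le 1$ and $\eta<1$) can be exchanged against enlarging the exponent from $k\eta$ to $k-2$; the identity $\eta=q/(q+1)$ is precisely what makes these two enlargements compatible in the regime where $\PP(\tau_U>s)+\delta^\eta\le 1$. This algebraic juggling, rather than the iteration itself, is where I expect the bulk of the technical work to lie.
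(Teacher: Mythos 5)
The plan is sound up to the point where you concede "the bulk of the technical work" lies, but that final conversion step is exactly where the argument breaks. Collapsing the two-term estimate into the single multiplicative form $\PP(\tau_U>s+t)\le(\PP(\tau_U>s)+\delta)\PP(\tau_U>t-\Delta)$ and iterating does legitimately give a bound of the form $(\PP(\tau_U>s)+\delta)^{J}$ with $J\approx k\eta$. However, the asserted conversion
\[
(\PP(\tau_U>s)+\delta)^{k\eta}\;\le\;(\PP(\tau_U>s)+\delta^\eta)^{k-2}
\]
is false in general. Taking logarithms and letting $k\to\infty$, it is equivalent to $(\PP(\tau_U>s)+\delta)^{\eta}\le\PP(\tau_U>s)+\delta^\eta$, and this fails whenever $\PP(\tau_U>s)$ is not small: e.g.\ with $q=2$, $\eta=2/3$, $\PP(\tau_U>s)=0.5$, $\delta=10^{-3}$, one has $(0.501)^{2/3}\approx 0.631 > 0.51=0.5+(10^{-3})^{2/3}$, so for all $k\ge 7$ (still a multiple of $q^{-1}=1/2$) the left side exceeds the right. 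The heuristic that "$\eta=q/(q+1)$ makes the two enlargements compatible" does not survive quantification.

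The underlying problem is that you discard the additive structure too early. In the paper's argument the induction step keeps the two terms separate,
\[
\PP(\tau_U>ks)\le \PP(\tau_U>s)\,\PP(\tau_U>(k-1)s)+\delta\,\PP\big(\tau_U>(k-1-q^{-1})s\big),
\]
applies the inductive hypothesis to each, and then factors out $(\PP(\tau_U>s)+\delta^\eta)^{k-3-q^{-1}}$. The bracketed remainder is handled by writing $\delta=\delta^\eta\cdot\delta^{\eta/q}$ — this uses precisely $\eta+\eta/q=1$ — and then bounding $\delta^{\eta/q}\le(\PP(\tau_U>s)+\delta^\eta)^{1/q}$. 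That absorption only works because $\delta$ is sitting by itself as a coefficient, not already merged into $\PP(\tau_U>s)+\delta$. Once you merge them, you are left needing the false inequality above, so the multiplicative collapse should be abandoned in favour of the inductive two-term bookkeeping.
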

\begin{proof}
We follow the proof of Theorem~1 in~\cite{HY} and observe that by the previous lemma, 
for $k\in[1,3]$ that is an integer multiple of $q^{-1}$, we have
\begin{align*}
\PP(\tau_U>ks)&\leq\PP(\tau_U>s)\PP(\tau_U>(k-1)s)+\delta\PP(\tau_U>(k-1-q^{-1})s)\\
&\leq\PP(\tau_U>s)^2+\delta\PP(\tau_U>s)\\
&=\PP(\tau_U>s)(\PP(\tau_U>s)+\delta)\\
&\leq(\PP(\tau_U>s)+\delta^\eta)^{k-2},
\end{align*}
where we used $\PP(\tau_U>s)\leq\PP(\tau_U>s)+\delta^\eta$, and $\PP(\tau_U>s)\leq 1.$ 
 
For $k>3$, we use induction on $m=k\cdot q\in\NN$:
\begin{align*}
\PP(\tau_U>ks)&\leq\PP(\tau_U>s)\PP(\tau_U>(k-1)s)+\delta\PP(\tau_U>(k-1-q^{-1})s)\\
&\leq\PP(\tau_U>s)(\PP(\tau_U>s)+\delta^\eta)^{k-3}+\delta(\PP(\tau_U>s)+\delta^\eta)^{k-3-q^{-1}}\\
&=(\PP(\tau_U>s)+\delta^\eta)^{k-3-q^{-1}}[\PP(\tau_U>s)(\PP(\tau_U>s)+\delta)^{q^{-1}}+\delta]\\
&\leq(\PP(\tau_U>s)+\delta^\eta)^{k-2}.
\end{align*}
We justify the last inequality as follows. By definition of $\eta,$ we have $\delta=\delta^\eta\delta^{\frac{\eta}{q}}\leq\delta^\eta(\PP(\tau_U>s)+\delta^\eta)^{q^{-1}}.$
Consider the bracketed term in the third line. \begin{align*}&\PP(\tau_U>s)(\PP(\tau_U>s)+\delta)^{q^{-1}}+\delta\\\leq&\PP(\tau_U>s)(\PP(\tau_U>s)+\delta)^{q^{-1}}+\delta^\eta(\PP(\tau_U>s)+\delta^\eta)^{q^{-1}}\\
=&\PP(\tau_U>s)+\delta^\eta)^{1+q^{-1}}.\end{align*}
By induction this completes the proof.
\end{proof}

The next lemma establishes the relation between the escape rate and the probability of short entries:

\begin{lemma}\label{l.mainlemma}
Assume that $\mu$ is either left or right $\phi$-mixing for the partition $\cA$, with $\phi(k)\le Ck^{-p}$ for some $p>0$. Let $\{U_n\in\sigma(\cA^{\kappa_n})\}$ be a nested sequence of sets for some $\kappa_n\nearrow\infty$. Furthermore, assume that there exists $\varepsilon\in(0,1)$, such that $\kappa_n\mu(U_n)^\varepsilon \to 0$.

Then we have 
\begin{equation}\label{e.mainlemma}
\rho(\Lambda) = \lim_{n\to\infty}\frac{\PP(\tau_{U_n}\le s_n)}{s_n\mu(U_n)},
\end{equation}
where $s_n= \floor{\mu(U_n)^{-(1-a)}}$ for any fixed $a>0$ small enough.
\end{lemma}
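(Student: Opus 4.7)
The plan is to use the block bound of Lemma~\ref{l.blocks} to convert the long-time survival event into a product of approximately independent short-time events of length $s_n$, and then linearise the resulting logarithm, exploiting $\PP(\tau_{U_n}\le s_n) \le s_n\mu(U_n) = \mu(U_n)^a \to 0$.

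First, I would choose the gap $\Delta_n$ so that $\delta_n := 2(\Delta_n\mu(U_n) + \phi(\Delta_n-\kappa_n))$ is $o(s_n\mu(U_n))$. Taking $\Delta_n = \mu(U_n)^{-\gamma}$ with $\varepsilon < \gamma < 1-a$ and $p\gamma > a$ works: the hypothesis $\kappa_n \ll \mu(U_n)^{-\varepsilon}$ gives $\Delta_n - \kappa_n \sim \Delta_n$, and $\phi(k) \le Ck^{-p}$ then gives $\phi(\Delta_n-\kappa_n) = O(\mu(U_n)^{\gamma p})$; such $\gamma$ exists once $a$ is sufficiently small. Lemma~\ref{l.blocks} then gives
\[
\PP(\tau_{U_n}>ks_n) \le \bigl(\PP(\tau_{U_n}>s_n)+\delta_n^\eta\bigr)^{k-2}
\]
for admissible $k$. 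Taking $-\log$, dividing by $ks_n$ and letting $k\to\infty$ produces the lower bound $\rho(U_n) \ge -s_n^{-1}\log(\PP(\tau_{U_n}>s_n)+\delta_n^\eta)$. For the matching upper bound, I would iterate the reverse mixing inequality $\PP(\tau_{U_n}>s+t) \ge \PP(\tau_{U_n}>s)\PP(\tau_{U_n}>t) - \delta_n$ to obtain $\PP(\tau_{U_n}>ks_n) \ge \PP(\tau_{U_n}>s_n)^k - \delta_n/\PP(\tau_{U_n}\le s_n)$; within the range of $k$ where the geometric term dominates the error (still wide enough to access the defining limit of $\rho(U_n)$), this produces $\rho(U_n) \le -s_n^{-1}\log\PP(\tau_{U_n}>s_n) + o(\mu(U_n))$. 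Since $\delta_n^\eta/(s_n\mu(U_n))\to 0$, the two bounds combine to
\[
\rho(U_n) = -\frac{1}{s_n}\log \PP(\tau_{U_n}>s_n) + o(\mu(U_n)).
\]

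To finish, I apply $-\log(1-x) = x + O(x^2)$ at $x = \PP(\tau_{U_n}\le s_n) \le \mu(U_n)^a$ to get
\[
-\frac{1}{s_n}\log \PP(\tau_{U_n}>s_n) = \frac{\PP(\tau_{U_n}\le s_n)}{s_n} + O(\mu(U_n)^{1+a}),
\]
with the remainder $o(\mu(U_n))$. Dividing by $\mu(U_n)$ and letting $n\to\infty$ yields~\eqref{e.mainlemma}.

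The hard part will be the upper bound on $\rho(U_n)$: the naive iteration of the reverse mixing inequality accumulates the additive error into $\delta_n/\PP(\tau_{U_n}\le s_n)$, which is only small compared to the geometric term $\PP(\tau_{U_n}>s_n)^k$ for $k$ up to roughly $\log(\mu(U_n)^a/\delta_n)/\PP(\tau_{U_n}\le s_n)$. Beyond this threshold, the trivial induction bound turns negative, so one must show the admissible range is already wide enough to capture the escape rate --- either by assembling a reverse analogue of Lemma~\ref{l.blocks} from blocks of length $s_n$ separated by gaps of length $\Delta_n$, or by establishing a subadditivity-type bound on $-\log\PP(\tau_{U_n}>ks_n)$ that extends to all $k$ up to an $o(\mu(U_n))$ error per unit time.
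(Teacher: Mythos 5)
Your proposal follows the same two steps as the paper: apply the block bound from Lemma~\ref{l.blocks} to relate $\PP(\tau_{U_n}>ks_n)$ to powers of $\PP(\tau_{U_n}>s_n)+\delta_n^{\eta_n}$, then linearise the logarithm using $\PP(\tau_{U_n}\le s_n)\le s_n\mu(U_n)=\mu(U_n)^a\to 0$ and balance $s_n$, $\Delta_n$ against $\kappa_n$ and $p$. The choice $\Delta_n\sim\mu(U_n)^{-\gamma}$ with $\varepsilon<\gamma<1-a$ and $p\gamma>a$ is exactly what the paper does (it calls your $\gamma$ by $b$), and your bookkeeping of $\delta_n^{\eta_n}/(s_n\mu(U_n))\to 0$ is correct.

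Where you diverge is in how candidly you treat the lower bound on $\PP(\tau_{U_n}>ks_n)$, which gives the upper bound on $\rho(U_n)$. The paper writes, after citing Lemma~\ref{l.blocks}, an \emph{equality}
$\frac{1}{ks_n}|\log\PP(\tau_{U_n}>ks_n)|=\frac{k-2}{ks_n}|\log(\PP(\tau_{U_n}>s_n)+\delta_n^{\eta_n})|$,
but Lemma~\ref{l.blocks} only supplies the $\ge$ direction; the paper is silently leaning on the two-sided version carried out in the proof of Theorem~1 of~\cite{HY}. You correctly flag this as the genuine content. Your worry about the naive additive iteration is well placed --- the bound $\PP(\tau_U>ks)\ge\PP(\tau_U>s)^k-\delta/\PP(\tau_U\le s)$ does collapse once $k$ crosses $\approx|\log(\delta/\PP(\tau_U\le s))|/\PP(\tau_U\le s)$ --- but your first alternative is the right one and it actually closes. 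Running the same induction as in Lemma~\ref{l.blocks} on the lower-bound side of~\cite[Lemma~4]{HY}, namely $\PP(\tau_U>s+t)\ge\PP(\tau_U>s)\PP(\tau_U>t)-\delta\,\PP(\tau_U>t-\Delta)$, with $\PP(\tau_U>ks-\Delta)\le\PP(\tau_U>(k-q^{-1})s)$, gives
$\PP(\tau_U>ks)\ge(\PP(\tau_U>s)-\delta^\eta)^{k+C}$
for all admissible $k$, and the inductive step closes under the sole condition $\PP(\tau_U>s)\ge 2\delta^\eta$, which is automatic here since $\PP(\tau_{U_n}>s_n)\to 1$ while $\delta_n^{\eta_n}\to 0$. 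The error is absorbed into the base of the exponential rather than accumulating additively, so there is no restriction on the range of $k$ and you can pass $k\to\infty$ as desired. With this reverse block bound in hand, your linearisation argument is complete and matches the paper's conclusion.

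Please write in the next version the reverse analogue of Lemma~\ref{l.blocks} explicitly (or cite the two-sided statement in~\cite{HY} precisely), since the displayed equality in the paper's proof is not literally a consequence of the stated Lemma~\ref{l.blocks}.
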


\begin{remark}
At first glance, the RHS of~\eqref{e.mainlemma} is similar to the definition of the local escape rate in~\eqref{e.escaperate}. However, since $s_n \ll \mu(U_n)^{-1}$ (where the latter is the average return time given by Kac's formula), $\PP(\tau_{U_n}\le s_n)$ concerns the probability of short entries to $U$.
\end{remark}

\begin{proof}
Let $\{s_n\}, \{\Delta_n\}$ be increasing sequences of positive integers with $\Delta_n<s_n/2$, whose choice will be specified later. Write $q_n = \floor{s_n/\Delta_n}$, $\eta_n = \frac{q_n}{q_n+1}$ and $\delta_n = 2(\Delta_n\mu(U_n) + \phi(\Delta_n-\kappa_n))$ as before. We again follow largely the proof of Theorem~1 in~\cite{HY} and get by Lemma~\ref{l.blocks}
$$
\frac{1}{ks_n} |\log\PP(\tau_{U_n}>ks_n)| = \frac{k-2}{ks_n}|\log\left(\PP(\tau_{U_n}>s_n)+\delta_n^{\eta_n}\right)|.
$$
Take limit as $k\to\infty$ and note that $\PP(\tau_{U_n}>s_n) = 1-\PP(\tau_{U_n}\le s_n)$, we obtain
\begin{align*}
\rho(U_n) =& \lim_{k\to\infty}\frac{1}{ks_n} |\log\PP(\tau_{U_n}>ks_n)|\\
 = &\frac{1}{s_n}\big(\PP(\tau_{U_n}\le s_n)+o(s_n\mu(U_n))+\mathcal{O}(\delta_n^{\eta_n})\big).\numberthis\label{e.8}
\end{align*}
Here we used the trivial estimate 
$$
\PP(\tau_{U_n}\le s_n) \le \PP\!\left(\bigcup_{1\le k\le s_n}T^{-k}(U_n)\right)\le s_n\mu(U_n).
$$

Divide~\eqref{e.8} by $\mu(U_n)$ and let $n\to\infty$, we obtain
\begin{equation}\label{e.9}
\rho(\Lambda) = \lim_{n\to\infty}\left(\frac{\PP(\tau_{U_n}\le s_n)}{s_n\mu(U_n)} + \frac{\delta_n^{\eta_n}}{s_n\mu(U_n)}\right).
\end{equation}

It remains to show that the second term converges to zero for some proper choice of $\{s_n\}$ and $\Delta_n$. For this purpose, we fix some $a\in(0,1), b\in(\varepsilon,1)$ and choose $s_n= \floor{\mu(U_n)^{-(1-a)}}$, and $\Delta_n = \floor{\mu(U_n)^{-b}}\gg\kappa_n = o(\mu(U_n)^{-\varepsilon})$. Then we have ($a_n \lesssim b_n$ means there exists constant $C$ such that $a_n\le C\cdot b_n$):
\begin{align*}
\frac{\delta_n^{\eta_n}}{s_n\mu(U_n)}\lesssim & \frac{\Delta_n^{\eta_n}\mu(U_n)^{\eta_n}}{s_n\mu(U_n)}  + \frac{\phi(\Delta_n-\kappa_n)^{\eta_n}}{s_n\mu(U_n)}\\
\le & \Delta_n\mu(U_n)^{\eta_n-a} + \Delta_n^{-p\eta_n}\mu(U_n)^{-a}\\
\le & \mu(U_n)^{\eta_n-a-b} + \mu(U_n)^{bp\eta_n-a}.
\end{align*} 

In order for both terms to go to zero, we need:
\begin{enumerate}
\item $1-a>b$, which guarantees that $s_n\gg \Delta_n$, so $q_n\to \infty$ and consequently $\eta_n\nearrow 1$; then the first term will go to zero;
\item $bp>a$, so that the second term goes to zero.
\end{enumerate}
Both requirements are satisfied if we take any $b\in(\varepsilon,1)$, then choose  $0<a<\min\{1-b,bp\}$. Combine this with~\eqref{e.9} we conclude that 
$$
\rho(\Lambda) = \lim_{n\to\infty}\frac{\PP(\tau_{U_n}\le s_n)}{s_n\mu(U_n)},
$$
as desired.
\end{proof}

In the remaining part of this section, we will prove that the RHS of~\eqref{e.mainlemma} coincides with the extreme index defined by~\eqref{e.4}. But before we move on, let us state a direct corollary of the previous lemma, which is interesting in its own right.
\begin{proposition}\label{p.coarsebound}
Assume that $\mu$ is either left or right $\phi$-mixing for the partition $\cA$, with $\phi(k)\le Ck^{-p}$ for some $p>0$. Let $\{U_n\in\sigma(\cA^{\kappa_n})\}$ be a nested sequence of sets for some $\kappa_n\nearrow\infty$. Furthermore, assume that there exists $\varepsilon\in(0,1)$, such that $\kappa_n\mu(U_n)^\varepsilon \to 0$.

Then we have 
\begin{equation}
\rho(\Lambda) \in [0,1],
\end{equation}
provided that the local escape rate at $\Lambda$ exists.
\end{proposition}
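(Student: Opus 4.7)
The plan is to invoke Lemma~\ref{l.mainlemma} directly: since the hypotheses of the proposition are precisely those of the lemma, we have the identity
$$
\rho(\Lambda) = \lim_{n\to\infty}\frac{\PP(\tau_{U_n}\le s_n)}{s_n\mu(U_n)},
$$
where $s_n = \floor{\mu(U_n)^{-(1-a)}}$ for some small $a>0$. The proposition then reduces to showing that this ratio lies in $[0,1]$ for all $n$ large enough.

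The lower bound $\rho(\Lambda)\ge 0$ is immediate, since both $\PP(\tau_{U_n}\le s_n)$ and $s_n\mu(U_n)$ are non-negative quantities.

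For the upper bound, I would use the trivial union bound together with the $T$-invariance of $\mu$, which was already exploited inside the proof of Lemma~\ref{l.mainlemma}. Namely,
$$
\PP(\tau_{U_n}\le s_n) = \mu\!\left(\bigcup_{k=1}^{s_n} T^{-k} U_n\right)\le \sum_{k=1}^{s_n}\mu(T^{-k}U_n)=s_n\mu(U_n),
$$
so the ratio is bounded above by $1$ for every $n$. Passing to the limit yields $\rho(\Lambda)\le 1$.

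There is no real obstacle: the content of the proposition is entirely packaged inside Lemma~\ref{l.mainlemma}, and the two bounds are respectively trivial and a one-line Markov-style estimate. The only thing worth emphasizing in the write-up is that the existence of the limit is assumed as part of the hypothesis, so we are not claiming convergence, only locating the (assumed) limit within $[0,1]$.
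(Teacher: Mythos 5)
Your proof is correct and follows exactly the paper's own route: apply Lemma~\ref{l.mainlemma} to rewrite $\rho(\Lambda)$ as the limit of $\PP(\tau_{U_n}\le s_n)/(s_n\mu(U_n))$, then bound this ratio above by $1$ via the union bound $\PP(\tau_{U_n}\le s_n)\le s_n\mu(U_n)$, which is precisely the trivial estimate already used inside the lemma's proof. No discrepancies.
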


\begin{proof}
The lower bound is clear. For the upper bound, the trivial estimated which is already used in the proof of Lemma~\ref{l.mainlemma} yields 
$$
\frac{\PP(\tau_{U_n}\le s_n)}{s_n\mu(U_n)} \le \frac{s_n\mu(U_n)}{s_n\mu(U_n)}=1.
$$
\end{proof}

\subsection{Proof of Theorem~\ref{m.2} and~\ref{m.gibbsmarkov}}\label{s.4.2}
First we prove Theorem~\ref{m.2} using the following lemma, which is stated for right $\phi$-mixing systems. The proof can be adapted for left $\phi$-mixing systems as well, with certain modification on the assumptions of $U_n$ (in particular, on how $U_n$ can be approximated by shorter cylinders). See Remark~\ref{rem:bothsides} below and the discussion in Section~\ref{s.4.3}.

\begin{lemma}\label{l.3} Let $\mu$ be right $\phi$-mixing for the partition $\cA$, with $\phi(k) \le Ck^{-p}$ for some $p>1$. Assume that $\{U_n\}$ is a good neighborhood system, such that $\hat\alpha_\ell(K)$ exists for $K$ large enough, and $\sum_{\ell}\hat{\alpha}_\ell<\infty$. Then we have 
$$
\lim_{n\to\infty}\frac{\PP(\tau_{U_n}\le s_n)}{s_n\mu(U_n)}=\alpha_1
$$
for any increasing sequence  $\{s_n\}$ for which $s_n\mu(U_n)\to0$ as $n\to\infty$.
\end{lemma}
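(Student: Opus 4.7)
The plan is to sandwich the ratio $\PP(\tau_{U_n}\le s_n)/(s_n\mu(U_n))$ between matching upper and lower bounds obtained by decomposing the time window $[1,s_n]$ into blocks of a shorter length $K$, thereby reducing everything to the behaviour of $\PP(\tau_{U_n}\le K)/(K\mu(U_n))$ as $n,K\to\infty$. The identification of this double limit with $\alpha_1$ follows from Theorem~\ref{t.1}: by the simple identity $\PP(\tau_{U_n}\le K)/(K\mu(U_n)) = 1/\EE[N_K\mid N_K\ge 1] = 1/\sum_\ell \ell\lambda_\ell(K,U_n)$, the limit in $n$ at fixed $K$ equals $1/\sum_\ell \ell\lambda_\ell(K)$, which in turn tends to $1/\sum_\ell \ell\lambda_\ell = \alpha_1$ as $K\to\infty$.

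The upper bound is obtained by pure sub-additivity and uses no mixing. Writing $s_n = qK + r$ with $0\le r<K$ and $E_j=\bigcup_{k\in B_j}T^{-k}U_n$ for blocks $B_j=[(j-1)K+1,jK]$, $T$-invariance gives $\mu(E_j)=\PP(\tau_{U_n}\le K)$ and hence
\[
\PP(\tau_{U_n}\le s_n)\le q\,\PP(\tau_{U_n}\le K) + K\mu(U_n).
\]
Dividing by $s_n\mu(U_n)$, fixing $K$ and letting $n\to\infty$ (so that $qK/s_n\to1$ and $K/s_n\to0$), and then sending $K\to\infty$, gives $\limsup_n \PP(\tau_{U_n}\le s_n)/(s_n\mu(U_n)) \le \alpha_1$.

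For the lower bound I would combine a two-term Bonferroni inequality with right $\phi$-mixing, which forces the insertion of gaps between blocks. Pick sequences with $\kappa_n\ll\Delta_n\ll K_n\ll s_n$, where $K_n\to\infty$ is chosen by a diagonal argument so that $\PP(\tau_{U_n}\le K_n)/(K_n\mu(U_n))\to\alpha_1$, and take blocks $B_j$ of length $K_n$ separated by gaps of length $\Delta_n$, giving $q_n=\lfloor s_n/(K_n+\Delta_n)\rfloor$ blocks. Each $E_j$ lies in $\sigma(\cA^{(j-1)(K_n+\Delta_n)+K_n+\kappa_n})$, so right $\phi$-mixing combined with $\phi(k)\le Ck^{-p}$ yields
\[
\mu(E_i\cap E_j)\le\mu(E_i)\mu(E_j)+\phi_R\!\left((j-i-1)(K_n+\Delta_n)+\Delta_n-\kappa_n\right)\mu(E_j),
\]
and Bonferroni produces the lower bound $q_n\PP(\tau_{U_n}\le K_n)$ minus a quadratic overlap of size $\mathcal{O}((s_n\mu(U_n))^2)$ and a mixing tail bounded by $C(\Delta_n-\kappa_n)^{-p}+CK_n^{-p}$. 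After normalising by $s_n\mu(U_n)$ both errors vanish (the mixing tail because $p>1$) and $q_nK_n/s_n\to 1$ since $\Delta_n/K_n\to 0$, so $\liminf_n\ge\alpha_1$, matching the upper bound.

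The main technical hurdle is the joint choice of the scales $\kappa_n\ll\Delta_n\ll K_n\ll s_n$: the mixing step demands $\Delta_n>\kappa_n$, yet $K_n$ must outrun $\Delta_n$ and still be slow enough that the diagonal limit $\PP(\tau_{U_n}\le K_n)/(K_n\mu(U_n))\to\alpha_1$ remains valid. When $s_n$ is too small relative to $\kappa_n$ to permit this choice directly, I would instead replace $U_n$ in one member of each pair by its $j$-cylinder approximation $U_n^j$ with $j<\kappa_n$, using the good-neighbourhood estimate $\mu(U_n^j)\le\mu(U_n)+Cj^{-p'}$ to trade a smaller effective cylinder length against a controllable extra error of order $\mathcal{O}(Kj^{-p'})$ per block.
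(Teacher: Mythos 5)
Your block-and-gap strategy reaches the right conclusion but diverges from the paper in two places, one cosmetic and one substantive. Cosmetically, the paper does not split the argument into a sub-additivity upper bound and a Bonferroni lower bound; it partitions $\{\tau_{U_n}\le s_n\}$ into the disjoint events $D_q$ ("block $q$ is the last one with a hit"), shows $\mu(D_q)=\mu(V_q)(1+\mathcal{O}^*(F))$, and reads both bounds off this at once. Your decomposition is the more standard one from extreme-value theory and is perfectly legitimate.

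The substantive issue is the scale conflict you yourself flag, and your primary construction does not resolve it. You insert gaps $\Delta_n\gg\kappa_n$ so the near-diagonal terms can be handled by $\phi$-mixing alone; this forces $K_n\gg\Delta_n\gg\kappa_n$, and you appeal to a diagonal argument to get $\PP(\tau_{U_n}\le K_n)/(K_n\mu(U_n))\to\alpha_1$. But the diagonal argument only supplies sequences $K_n\to\infty$ that grow \emph{slowly enough}, while the constraint $K_n\gg\kappa_n$ can force $K_n$ to grow fast (the good-neighborhood hypothesis only gives $\kappa_n=o(\mu(U_n)^{-\varepsilon})$, which can be large). These two demands on $K_n$ cannot in general be met simultaneously, so the primary route stalls precisely where you suspected. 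Your fallback of replacing $U_n$ by its outer $j$-cylinder approximation $U_n^j$ for $j<\kappa_n$ is exactly the paper's device (its term III), and with it you no longer need $\Delta_n>\kappa_n$: fixing $K$ and using a gap of size $\sqrt{K}$, one bounds the short-range overlaps by
\[
\mu(U_n\cap T^{-j}U_n)\le\mu\bigl(U_n^{\lfloor j/2\rfloor}\cap T^{-j}U_n\bigr)\le\mu(U_n)\bigl(\mu(U_n^{\lfloor j/2\rfloor})+\phi(\lfloor j/2\rfloor)\bigr),
\]
and the good-neighborhood estimate $\mu(U_n^j)\le\mu(U_n)+Cj^{-p'}$ makes the resulting sum over $j\in[\sqrt{K},\kappa_n]$ negligible. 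You should run this version explicitly rather than the gap version. One further caution: passing $\sum_\ell\ell\lambda_\ell(K,U_n)\to\sum_\ell\ell\lambda_\ell=1/\alpha_1$ through the double limit in $n$ then $K$ needs a uniform-tail step justified by $\sum_\ell\ell\hat\alpha_\ell<\infty$; the paper sidesteps this by invoking \cite[Lemma~3]{HV19}, which already provides the quantitative form $\PP(\tau_{U_n}\le K)=\alpha_1K\mu(U_n)(1+\mathcal{O}^*(\varepsilon))$, and you should either cite that or justify the interchange.
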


\begin{proof} 
For an given integer $s$, write $Z_n^{s} = \sum_{j=1}^s\mathbb{I}_{U_n}\circ T^j$ which counts the number of entries to $U_n$ before time $s$.
Let $K$ be a large integer, then  by~\cite{HV19} Lemma~3 for every $\varepsilon>0$ one has
$\mathbb{P}(\tau_{U_n}\le K)=\alpha_1K\mu(U_n)(1+\mathcal{O}^*(\varepsilon))$ for all $n$ large enough,
where the notation $\mathcal{O}^*$ means that the implied constant is one (i.e.\ $x=\mathcal{O}^*(\varepsilon)$
if $|x|< \varepsilon$).
For simplicity, assume $r=s_n/K$ is an integer and  put
$$
V_q=\{Z_n^K\circ T^{qK}\ge1\},
$$
$q=0,1,\dots,r-1$, and 
$$
D_q=\{V_q,Z_n^{(r-q-1)K}\circ T^{(q+1)K}=0\}.
$$
Then
$$
\{Z_n^{s_n}\ge1\}=\bigcup_{q=0}^{r-1}D_q
$$
is a disjoint union.
Let us now estimate
\begin{align*}
\mathbb{P}(Z_n^{(r-q-1)K}\circ T^{(q+1)K}\ge1, V_q)\hspace{-3cm}&\\
&\le \mathbb{P}(Z_n^{(r-q-1)K-2\sqrt{K}}\circ T^{(q+1)K+2\sqrt{K}}\ge1, V_q)+2\sqrt{K}\mu(U_n)\\
&\le2\sqrt{K}\mu(U_n)+\mu(V_q, Z_n^{s_n - (q+1)K - 2\kappa_n}\circ T^{(q+1)K+2\kappa_n} \ge 1)\numberthis\label{e.III}\\
&\hspace{3cm}+\sum_{j=qK}^{(q+1)K-1}\sum_{i=(q+1)K+2\sqrt{K}}^{(q+1)K+2\kappa_n}\mu(T^{-j}U_n\cap T^{-i}U_n)\\
&=: {\rm I}+{\rm II}+{\rm III}.
\end{align*}
To bound ${\rm II}$, note that $\{Z_n^{s_n - (q+1)K - 2\kappa_n}\circ T^{(q+1)K+2\kappa_n}\ge 1\}$ is the event of having a hit between $[(q+1)K+2\kappa_n, s_n]$. We cut this interval into $t_n = \floor{\frac{s_n - (q+1)K - 2\kappa_n}{K}} \ge 0$ (II is void when $t_n$ is negative) many blocks with length $K$. This allow us to estimate: 
\begin{align*}
{\rm II}\le& \sum_{j=0}^{t_n+1} \mu(Z_n^K\circ T^{qK}\ge 1, Z_n^K\circ T^{(q+1+j)K+2\kappa_n}\ge 1) \\
 =& \sum_{j=0}^{t_n+1}\sum_{k = 1}^K \mu(T^{-qK-k}U_n, Z_n^K\circ T^{(q+1+j)K+2\kappa_n}\ge 1) \\
 \le& \sum_{j=0}^{t_n+1}\sum_{k = 1}^K\mu(Z_n^K\ge 1)(\mu(U_n) + \phi((j+1)K+\kappa_n -k ))\\
 \le& \sum_{i=\kappa_n}^{s_n+K} \mu(V_q)(\mu(U_n) + \phi(i)),
\end{align*}
where we used $(t_n +1)$ many blocks instead of $t_n$ to cover the remaining $\le K$ many hits at the end. The third inequality follows from right $\phi$-mixing, and the last line is due to $\mu(Z_n^K\ge 1) = \mu(V_q)$.

For the third term in~\eqref{e.III}, we use right $\phi$-mixing again to get (and recall that $U_n^j$ is the outer-approximation of $U_n$ by $j$-cylinders):
\begin{align*}
{\rm III}\le & \sum_{j=qK}^{(q+1)K-1}\sum_{i=(p+1)K+2\sqrt{K}}^{(q+1)K+2\kappa_n}\mu(U_n\cap T^{-(i-j)}U_n)\\
\le & K\sum_{j=2\sqrt{K}}^{2\kappa_n}\mu(U_n^{j/2}\cap T^{-j}U_n)\\
\le & K\sum_{j=2\sqrt{K}}^{2\kappa_n} \mu(U_n)(\mu(U_n^{j/2}) + \phi(j/2))\\
= & \cO(1)\mu(V_q)\sum_{j=2\sqrt{K}}^{2\kappa_n} (\mu(U_n^{j/2}) + \phi(j/2)),
\end{align*}
where the last equality follows from 
$$
\mu(V_q) = \PP(\tau_{U_n}\le K) = \alpha_1K\mu(U_n)(1+\cO^*(\varepsilon)).
$$

Combining the previous estimates, we get
\begin{eqnarray*}
\mathbb{P}(Z_n^{(r-q-1)K}\circ T^{(q+1)K}\ge1, V_q)\hspace{-4cm}&&\\
&\le& \mathbb{P}(Z_n^{(r-q-1)K-2\sqrt{K}}\circ T^{(q+1)K+2\sqrt{K}}\ge1, V_q)+2\sqrt{K}\mu(U_n)\\
&\le&2\sqrt{K}\mu(U_n)
+\mu(V_q)\sum_{i=\kappa_n}^{s_n+K}(\mu(U_n)+\phi(i))\\
 & &\hspace{3cm}+\mu(V_q)\cO(1)\sum_{j=\sqrt{K}}^{\kappa_n}(\mu(U_n^{j})+\phi(j))\\
&\le&\mu(V_q)F,
\end{eqnarray*}
where
$$
F=\frac2{\sqrt{K}}+(s_n+K)\mu(U_n)+\cO(1)(\phi^1(\sqrt{K})
+\sum_{j=\sqrt{K}}^{\kappa_n}\mu(U_n^{i}))
$$
and $\phi^1(u)=\sum_{i=u}^\infty\phi(i)$ is the tail-sum of $\phi$ which by assumption goes to zero
as $u$ goes to infinity.

If $n$ is large enough so that $\max\{s_n\mu(U_n), \kappa_n\mu(U_n), \phi^1(\kappa_n)\}<\varepsilon$ then
\begin{align*}
F&\le2\varepsilon+\frac2{\sqrt{K}}+\cO(1)\left(\phi^1(\sqrt{K})
+\kappa_n\mu(U_n)+\sum_{i=\sqrt{K}}^{\kappa_n}i^{-p'}\right)\\
&\lesssim\varepsilon+\frac1{\sqrt{K}}+\phi^1(\sqrt{K})
+K^{-\frac{p'-1}2},
\end{align*}
where we used the assumption that $\mu(U_n^{i})\le\mu(U_n)+ Ci^{-p'}$ for some $p'>1$.
Consequently
$$
\mu(D_q)=\mu(V_q)-\mathbb{P}(V_q,Z_n^{(r-q-1)K}\circ T^{(q+1)K}\ge1)
=\mu(V_q)(1+\mathcal{O}^*(F)),
$$
and since $\{Z_n^{qK}\ge1,V_q\}=V_q$ and $\mu(V_q)=\mu(V_0)$ we get
$$
\mathbb{P}(Z_n^{s_n}\ge1)
=\sum_{q=0}^{r-1}\mathbb{P}(D_q)
=r\mu(V_0)(1+\mathcal{O}^*(F)).
$$
Since by~\cite{HV19} Lemma~3 $\mu(V_0)=\alpha_1K\mu(U_n)(1+\mathcal{O}^*(\varepsilon))$
we obtain
$$
\mathbb{P}(\tau_{U_n}\le s_n)
=r\mu(V_0)(1+\mathcal{O}^*(F))
=\alpha_1s_n\mu(U_n)(1+\mathcal{O}^*(\varepsilon+F)).
$$
The statement of the lemma now follows if we let $\varepsilon\to0$ and then $K\to\infty$.
\end{proof}

\begin{remark}\label{rem:bothsides}
Similar to the previous lemmas which hold for both left and right $\phi$-mixing measures, Lemma~\ref{l.3} has a similar formulation in the left $\phi$-mixing case. The estimate of II in~\eqref{e.III} is mostly the same (see the proof of Lemma~\ref{l.3'} below for more detail). However, this would require us to modify the definition of the approximated sets 
$U^i_n$ as 
$$
\tilde{U}_n^i=T^{-(n-i)}A_i(T^{n-i}U_n),
$$
with the assumption that the measure of $\tilde{U}^i_n$ is small (preferably summable in $i$, similar to (2) in Definition~\ref{d.2}). This is indeed the treatment in~\cite[Lemma 3]{HY} when $\Lambda = \{x\}$. However, such an assumption may not hold when $\Lambda$ is a non-singleton null set. The right $\phi$-mixing property avoids this problem. 
\end{remark}

\begin{remark}\label{r.invertible}
So far we have assumed that $T$ is non-invertible. This is because in the invertible case, the approximation $U^j_n$ and $\tilde U^j_n$ may become the entire space.
 As an example, take $\bM=\Omega$ to be a full, two-sided shift space and $T=\sigma$ the left-shift. Let the sets $U_n$ be $n$-approximation of an unstable leaf $\Gamma$ through a non-periodic point $x\in\Omega$,
e.g.\ $\Gamma=\{y\in\Omega: y_i=x_i\;\forall\;i\le0\}$. Obviously $\Gamma$ is a null set
but in this case we get that $\tilde{U}^j=\Omega$ the entire space whenever $i<n/2$. For a geometric example,let $T$ be an Anosov diffeomorphisms on $\TT^n$ with minimal unstable foliations and $\Lambda$ be the local unstable manifold at some $x\in \bM$. Then $T^j \Lambda$ eventually becomes $\varepsilon$-dense in $\bM$, and the approximation $\tilde{U}^i_n$ (with respect to a Markov partition $\cA$) is the entire space for $i$ small. By symmetry and Remark~\ref{r.invertible}, we see that if $\Lambda$ is chosen to be a local stable manifold then $U^j_n = \bM$ for $j$ small.  

On the other hand, in the proof of Lemma~\ref{l.3} the approximation $U^j_n$ is only used to control III of~\eqref{e.III}. Later this observation will allow us to obtain a result for invertible systems where this term does not appear. See Theorem~\ref{t.leftmixing} and~\ref{t.leftmixing.open} below.

\end{remark}

Below we state an alternate version of Lemma~\ref{l.3} where the right $\phi$-mixing assumption is replaced by the Gibbs-Markov property. This allows us to bypass the issue stated in Remark~\ref{rem:bothsides} and keep the choice of $U_n^i$. 

\begin{lemma}\label{l.3'}
Let $(T,\mu,\cA)$ be a Gibbs-Markov system. Assume that $\{U_n\}$ is a good neighborhood system, such that $\hat\alpha_\ell(K)$ exists for $K$ large enough, and $\sum_{\ell}\hat{\alpha}_\ell<\infty$. Then we have 
$$
\lim_{n\to\infty}\frac{\PP(\tau_{U_n}\le s_n)}{s_n\mu(U_n)}=\alpha_1
$$
for any increasing sequence  $\{s_n\}$ for which $s_n\mu(U_n)\to0$ as $n\to\infty$.	
\end{lemma}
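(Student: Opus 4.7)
The plan is to follow the argument of Lemma~\ref{l.3} essentially line by line, replacing the polynomial right $\phi$-mixing hypothesis by the exponential $\psi$-mixing property that is built into the Gibbs-Markov structure. First I would record that for a Gibbs-Markov system the Gibbs distortion bound, combined with the big-image property, upgrades the naturally-available left $\phi$-mixing to two-sided $\psi$-mixing at an exponential rate: for $A\in\sigma(\cA^m)$ and $B\in\sigma(\bigcup_j\cA^j)$,
$$
|\mu(A\cap T^{-m-k}B) - \mu(A)\mu(B)| \lesssim \eta^k\,\mu(A)\mu(B)
$$
for some $\eta\in(0,1)$. In particular the system is right $\phi$-mixing with $\phi(k)\lesssim \eta^k$, so the hypotheses of Lemma~\ref{l.3} are formally satisfied with any $p>1$.

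With this in hand, the block decomposition $\{Z_n^{s_n}\ge 1\}=\bigsqcup_{q=0}^{r-1}D_q$ and the three-term decomposition of $\mu(V_q, Z_n^{(r-q-1)K}\circ T^{(q+1)K}\ge 1)$ appearing in~\eqref{e.III} carry over verbatim. Terms I and II require only one-sided $\phi$-mixing, which we have. For III, I would apply the $\psi$-mixing estimate
$$
\mu(U_n^{j/2}\cap T^{-j}U_n) \le \mu(U_n^{j/2})\mu(U_n)(1+\psi(j/2)),
$$
so that after summing $j$ over $[2\sqrt{K},2\kappa_n]$ and dividing by $\mu(V_q)\asymp K\mu(U_n)$ one arrives at the same shape of bound as before, namely $F_{\mathrm{III}}\lesssim \kappa_n\mu(U_n)+K^{-(p'-1)/2}+\phi^1(\sqrt{K})$, which tends to zero as $n\to\infty$ and then $K\to\infty$ under the good-neighborhood hypothesis $\mu(U_n^j)\le \mu(U_n)+Cj^{-p'}$.

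The remainder of the argument is identical to Lemma~\ref{l.3}: one concludes $\mu(D_q)=\mu(V_q)(1+\mathcal{O}^*(F))$, sums over $q$, invokes $\mu(V_0)=\alpha_1 K\mu(U_n)(1+\mathcal{O}^*(\varepsilon))$ from~\cite{HV19} Lemma~3, and sends $\varepsilon\to 0$ followed by $K\to\infty$ to obtain
$$
\PP(\tau_{U_n}\le s_n)=\alpha_1 s_n\mu(U_n)(1+o(1)),
$$
as required.

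The main obstacle — and the reason this result is recorded as a separate lemma rather than as a corollary of Lemma~\ref{l.3} — is exactly the point flagged in Remark~\ref{rem:bothsides}: the ``standard'' mixing available for Gibbs-Markov maps is left $\phi$-mixing, and relying on that alone would force replacing the outer approximation $U_n^i$ by the inner approximation $\tilde{U}_n^i$, whose measure one cannot control for non-singleton null sets $\Lambda$. The substantive technical content of the proof is therefore the verification that Gibbs distortion does upgrade the mixing to $\psi$-mixing, thereby allowing the estimate for III to go through with $U_n^i$ and reducing the remaining argument to an application of the Lemma~\ref{l.3} scheme.
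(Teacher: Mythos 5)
Your proposal is correct in substance but departs from the route the paper actually takes, and the departure is worth flagging. The paper's proof does \emph{not} invoke right $\phi$-mixing or $\psi$-mixing for Gibbs-Markov systems at all. Instead, the paper uses left $\phi$-mixing (with exponential rate, cited from~\cite{MN05}) only for the term ${\rm II}$ of~\eqref{e.III}, and for the term ${\rm III}$ it bypasses mixing entirely: writing $\mu(U_n\cap T^{-j}U_n)\le\sum_{A\in\cC_j(U_n)}\mu(T^{-j}U_n\cap A)$ and then applying the distortion bound together with the big image property \emph{directly} to the conditional measures $\mu(T^{-j}U_n\cap A)/\mu(A)$, yielding $\mu(U_n\cap T^{-j}U_n)\lesssim\mu(U_n)\mu(U_n^j)$ with a constant coming only from the Gibbs distortion constant $D$. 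That estimate is elementary and does not rely on any spectral gap. Your route, by contrast, first asserts that Gibbs-Markov maps satisfy two-sided exponential $\psi$-mixing and then feeds that back into the Lemma~\ref{l.3} machinery via the bound $\mu(U_n^{j/2}\cap T^{-j}U_n)\le\mu(U_n^{j/2})\mu(U_n)(1+\psi(j/2))$. This is true, and if you grant it, Lemma~\ref{l.3'} is simply a corollary of Lemma~\ref{l.3} (since $\psi$-mixing implies right $\phi$-mixing, and exponential decay is certainly polynomial), so no re-derivation would even be needed. The catch is that your one-sentence justification --- that distortion plus big image ``upgrades'' left $\phi$-mixing to $\psi$-mixing --- understates the content of that fact: the genuine proof goes through the Lipschitz regularity of $L^m\mathbb{1}_A/\mu(A)$ (which distortion gives you) together with the spectral gap of the transfer operator on Lipschitz functions, i.e.\ it is the Aaronson--Denker continued-fraction mixing result, not a pointwise consequence of the Gibbs bounds. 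The paper's choice is thus the more self-contained one: it uses only the estimates it has already stated, and only where they are needed. Both arguments land in the same place and your final bound $F\lesssim \varepsilon + K^{-1/2}+\phi^1(\sqrt{K})+K^{-(p'-1)/2}$ matches the paper's, but you should either cite the $\psi$-mixing property of Gibbs-Markov maps as a theorem (with a reference) or, preferably, replace that invocation with the direct Gibbs estimate, which is shorter and transparently within the stated hypotheses.
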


\begin{proof}
Recall that Gibbs-Markov systems are left $\phi$-mixing with exponential rate. The proof follows the lines of Lemma~\ref{l.3} up to equation~\eqref{e.III}, which is now estimated using the left $\phi$-mixing as:
\begin{align*}
{\rm II} = &\ \mu(V_q, Z_n^{s_n - (q+1)K - 2\kappa_n}\circ T^{(q+1)K+2\kappa_n} \ge 1)\\
\le& \sum_{i=(q+1)K+2\kappa_n}^{s_n}\mu(V_q \cap T^{-i}U_n)\\
\le& \sum_{i=\kappa_n}^{s_n}\mu(V_q)(\mu(U_n) + \phi(i)).
\end{align*}
Note that the proof in this case is much short and the bound is almost the same as before.

For III, we first split the term into the summation over the intersections of $U_n$ with $T^iU_n$:

\begin{align*}
{\rm III}\le & \sum_{j=qK}^{(q+1)K-1}\sum_{i=(p+1)K+2\sqrt{K}}^{(q+1)K+2\kappa_n}\mu(U_n\cap T^{-(i-j)}U_n)\\
\le & \ K\sum_{j=2\sqrt{K}}^{2\kappa_n}\mu(U_n\cap T^{-j}U_n).
\end{align*}
Each term in the summation can be bounded by:
\begin{align*}
\mu(U_n\cap T^{-j}U_n)\le& \sum_{A\in\cC_j(U_n)}\mu(T^{-j}U_n\cap A)\\
=&\sum_{A\in\cC_j(U_n)}\frac{\mu(T^{-j}U_n\cap A)}{\mu(A)}\mu(A)\\
\lesssim&\sum_{A\in\cC_j(U_n)}\frac{\mu(T^j(T^{-j}U_n\cap A))}{\mu(T^jA)}\mu(A)\\
\lesssim&\sum_{A\in\cC_j(U_n)}\mu(U_n)\mu(A)\\
=&\mu(U_n)\mu\left(\bigcup_{A\in\cC_j(U_n)}A\right)=\mu(U_n)\mu(U_n^j),
\end{align*}
where the third and forth inequality follow from the distortion and the big image property of Gibbs-Markov systems. See~\cite[Theorem D]{FY19}.

Then
$$
{\rm III} \le c_1K\mu(U_n)\sum_{j= 2\sqrt{K}}^{\kappa_n} \mu(U_n^j) = \cO(1)\mu(V_p)\sum_{j= 2\sqrt{K}}^{2\kappa_n} \mu(U_n^j),
$$
for some $c_1$
and the rest of the proof is identical to Lemma~\ref{l.3}.
\end{proof}

Now Theorem~\ref{m.2} and~\ref{m.gibbsmarkov} are  immediate consequences of  Lemma~\ref{l.mainlemma},~\ref{l.3} and~\ref{l.3'}.

\begin{proof}[Proof of Corollary~\ref{mc.0}]
This corollary directly follows from Lemma~\ref{l.3.1}.
\end{proof}

\begin{proof}[Proof of Corollary~\ref{mc.1}]
We need the following proposition from~\cite{FY19}:
\begin{proposition}\cite[Proposition 6.3]{FY19}
Let $T$ be a continuous map on the compact metric space $\bM$, and $\{U_n\}$ a nested sequence of sets such that $\cap_nU_n = \cap_n \overline{U}_n$.  Then $\pi(U_n) \to\infty$ if and only if $\Lambda=\cap_nU_n $ intersects every forward orbit at most once.
\end{proposition}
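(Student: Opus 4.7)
The plan is to prove both implications by contraposition, using sequential compactness of $\bM$ together with continuity of $T$ to extract a limit point that witnesses a short recurrence in $\Lambda$ itself.

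For the easier direction, I would assume that $\Lambda$ intersects some forward orbit more than once: there exists $x\in\Lambda$ and an integer $k\ge 1$ with $T^k(x)\in\Lambda$. Since $\Lambda=\bigcap_n U_n$, for every $n$ one has $x\in U_n$ and $x\in T^{-k}U_n$, so $T^{-k}U_n\cap U_n\ne\emptyset$. Hence $\pi(U_n)\le k$ for every $n$, ruling out $\pi(U_n)\to\infty$.

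For the converse, I would suppose $\pi(U_n)\not\to\infty$, extract a subsequence $n_j\to\infty$ along which $\pi(U_{n_j})\le K$ for some fixed $K$, and for each $j$ pick $x_j\in U_{n_j}$ and $k_j\in\{1,\dots,K\}$ with $T^{k_j}x_j\in U_{n_j}$. Because $k_j$ only takes finitely many values, a further subsequence fixes $k_j=k$. Compactness of $\bM$ then yields a sub-subsequence with $x_j\to x$, and continuity gives $T^kx_j\to T^kx$. The key step is to show $x\in\Lambda$ and $T^kx\in\Lambda$: for any fixed $m$, $x_j\in U_{n_j}\subset U_m$ for $j$ large enough (since the sequence is nested and $n_j\to\infty$), so $x\in\overline{U_m}$, and likewise $T^kx\in\overline{U_m}$, for every $m$. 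Invoking the hypothesis $\bigcap_m U_m=\bigcap_m \overline{U_m}$ places both $x$ and $T^kx$ in $\Lambda$, contradicting the assumption that $\Lambda$ intersects every forward orbit at most once.

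I expect no serious obstacle here; the only subtle point is the role of the hypothesis $\bigcap_n U_n=\bigcap_n \overline{U_n}$, which is exactly what lets the compactness argument pass from limit points in $\overline{U_m}$ back to $\Lambda$. Without it one would only be able to conclude $x,T^kx\in\bigcap_n \overline{U_n}$, which need not coincide with $\Lambda$; this is why the hypothesis is essential and cannot be dropped.
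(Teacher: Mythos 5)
Your proof is correct and complete in both directions. Note that the paper itself gives no proof of this proposition—it is quoted verbatim from \cite{FY19} (Proposition 6.3)—so there is nothing in-paper to compare against; your argument (the trivial contrapositive for one direction, and for the other the extraction of a constant return time $k\le K$ plus a compactness/continuity limit point pushed into $\Lambda$ via the hypothesis $\bigcap_n U_n=\bigcap_n\overline{U}_n$) is exactly the standard route one expects, and your closing remark correctly identifies why that closure hypothesis cannot be dropped.
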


Since $\pi_{\ess}(U)\ge \pi(U)$, we have $\pi_{\ess}(U_n)\to\infty$. Combined with Corollary~\ref{mc.0}, we obtain Corollary~\ref{mc.1}.
\end{proof}

\subsection{Some remarks on the extremal index}\label{s.4.3}
In the classic literature (for example,~\cite{FFT13},~\cite{FFFH} and~\cite{FFRS}), the extremal index is defined as
\begin{equation}\label{e.extremal}
\theta = \lim_{n\to\infty}\mu_{U_n}(\tau_{U_n}>K_n),
\end{equation}
where $K_n\to\infty$ is some increasing sequence of integers. It is shown in~\cite[Proposition 5.4]{FY19} that under the assumption of Theorem~\ref{m.gibbsmarkov} we have 
$$
\alpha_1 = \theta.
$$

It is also straight forward to check that the proof of Lemma~\ref{l.3} and~\ref{l.3'} remain true with $\alpha_1$ replaced by $\theta$. We state this as the following proposition:

\begin{proposition}\label{p.4.9}
	Assume that one of the following assumptions holds:
	\begin{enumerate}
		\item either $\mu$ is right $\phi$-mixing with $\phi(k)\lesssim k^{-p}$, $p>1$;
		\item or $(T,\mu,\cA)$ is a Gibbs-Markov system.
	\end{enumerate}
	Let $\theta$ be the extremal index defined by~\eqref{e.extremal} for some sequence $\{K_n\}$. 
	Then for any good neighborhood system $\{U_n\}$ and any increasing sequence $\{s_n\}$ with $s_n\mu(U_n)\to 0$ and $s_n/K_n\to\infty$, we have 
	$$
	\lim_{n\to\infty}\frac{\PP(\tau_{U_n}\le s_n)}{s_n\mu(U_n)}=\theta.
	$$
	Furthermore, the local escape rate at $\Lambda = \bigcap_n U_n$ exists and satisfies 
	$$
	\rho(\Lambda) = \theta.
	$$
\end{proposition}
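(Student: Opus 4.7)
The second assertion $\rho(\Lambda)=\theta$ is an immediate consequence of the first via Lemma~\ref{l.mainlemma}, so I concentrate on proving $\PP(\tau_{U_n}\le s_n)/(s_n\mu(U_n))\to\theta$. The plan is to re-run the block arguments in Lemma~\ref{l.3} (right $\phi$-mixing case) and Lemma~\ref{l.3'} (Gibbs-Markov case) with exactly two systematic substitutions: the fixed cluster size $K$ is replaced by the sequence $K_n$ from the definition of $\theta$, and the constant $\alpha_1$ is replaced by $\theta$.

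Concretely, I would set $r_n=\lfloor s_n/K_n\rfloor$, which diverges by the hypothesis $s_n/K_n\to\infty$, and define the blocks $V_q=\{Z_n^{K_n}\circ T^{qK_n}\ge1\}$ and $D_q=\{V_q,\,Z_n^{(r_n-q-1)K_n}\circ T^{(q+1)K_n}=0\}$ as in Lemma~\ref{l.3} with $K_n$ in place of $K$. The disjoint decomposition $\{\tau_{U_n}\le s_n\}=\bigsqcup_{q=0}^{r_n-1}D_q$ remains valid, and the estimates on the terms I, II, III in \eqref{e.III} transcribe word-for-word with $\sqrt K$ replaced by $\sqrt{K_n}$ throughout; the Gibbs-Markov version needs only the analogous replacement in the proof of Lemma~\ref{l.3'}. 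The residual error becomes
$$F\lesssim\varepsilon+K_n^{-1/2}+\phi^1(\sqrt{K_n})+K_n^{-(p'-1)/2},$$
which tends to zero because $K_n\to\infty$ and $\phi$ decays polynomially.

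The only input that differs from the fixed-$K$ proof is the identity $\mu(V_0)=\PP(\tau_{U_n}\le K_n)=\theta K_n\mu(U_n)(1+o(1))$, which replaces the fixed-$K$ statement from \cite[Lemma~3]{HV19} used in Lemma~\ref{l.3}. Under our standing hypotheses, \cite[Proposition~5.4]{FY19} establishes $\alpha_1=\theta$, so the fixed-$K$ estimate holds with $\theta$ in place of $\alpha_1$, and a standard interchange of limits upgrades it to the diagonal version with $K=K_n$. Summing $\mu(D_q)=\mu(V_q)(1+\mathcal{O}^*(F))$ over $q=0,\dots,r_n-1$ and noting $\mu(V_q)=\mu(V_0)$ then yields $\PP(\tau_{U_n}\le s_n)=r_n\mu(V_0)(1+o(1))=\theta s_n\mu(U_n)(1+o(1))$, which is the first assertion.

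The main obstacle I anticipate is precisely this diagonal interchange in $\PP(\tau_{U_n}\le K_n)=\theta K_n\mu(U_n)(1+o(1))$: \cite[Lemma~3]{HV19} together with $\alpha_1=\theta$ gives the fixed-$K$ version, but one must control the error uniformly as $K=K_n$ grows with $n$ in order to pass to the diagonal. A direct alternative, bypassing the interchange, is to combine the lower bound supplied by the pairwise disjointness of $\{T^{-j}(U_n\cap\{\tau_{U_n}>K_n\})\}_{0\le j<K_n}$ (which by invariance contributes exactly $K_n\mu(U_n)\theta(1+o(1))$) with an upper bound obtained via a second-moment or Bonferroni estimate on overcounts of clustered entries under the mixing (resp.\ Gibbs-Markov) hypothesis. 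Either route completes the argument, after which Lemma~\ref{l.mainlemma} immediately yields $\rho(\Lambda)=\theta$.
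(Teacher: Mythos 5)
Your overall strategy is the paper's own: the paper dismisses the proof of Proposition~\ref{p.4.9} with the remark that the proof of Lemma~\ref{l.3} (resp.\ Lemma~\ref{l.3'}) ``remains true with $\alpha_1$ replaced by $\theta$,'' which is exactly your re-run of the block argument with $K$ replaced by $K_n$. You are also right that the nontrivial step hiding behind this remark is the diagonal estimate
$$\PP(\tau_{U_n}\le K_n)=\theta K_n\mu(U_n)(1+o(1)),$$
which replaces the use of \cite[Lemma~3]{HV19} for fixed $K$ in the original proof. The issue is that neither of your two proposed routes to this estimate is airtight. Route~1 (invoke \cite[Proposition~5.4]{FY19} to get $\alpha_1=\theta$ plus the fixed-$K$ result and then ``interchange limits'') silently imports the assumption that the limits $\hat\alpha_\ell(K)$ exist and that $\sum_\ell \ell\hat\alpha_\ell<\infty$, which are hypotheses of Theorem~\ref{m.gibbsmarkov} but are not hypotheses of Proposition~\ref{p.4.9} (nor of Theorem~\ref{t.leftmixing}, the point of which is to avoid them). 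Route~2 (disjointness plus Bonferroni) does give a correct lower bound via the decomposition by last visit time, $\PP(\tau_{U_n}\le K_n)=\mu(U_n)\sum_{i=0}^{K_n-1}\mu_{U_n}(\tau_{U_n}>i)\ge K_n\mu(U_n)\mu_{U_n}(\tau_{U_n}>K_n)$ (your index range should be $1\le j\le K_n$ rather than $0\le j<K_n$, but the idea is right). However, Bonferroni-type bounds go the wrong direction for what you actually need: the desired upper bound $\PP(\tau_{U_n}\le K_n)\le\theta K_n\mu(U_n)(1+o(1))$ asks that a definite \emph{fraction} of the sets $T^{-j}U_n$ overlap, and neither the inclusion--exclusion lower bound nor the trivial union upper bound delivers that.

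Here is a way to close the gap using only the definition of $\theta$ and the machinery already in the paper. Apply Lemma~\ref{l.blocks} (the block estimate) with time horizon $2K_n$ and block length $K_n$, so that $r=2$. All the error terms in $F$ are controlled exactly as in Lemma~\ref{l.3} once $K_n\to\infty$ and $K_n\mu(U_n)\to 0$ (which holds since $s_n\mu(U_n)\to 0$ and $s_n/K_n\to\infty$), giving
$$\PP(\tau_{U_n}\le 2K_n)=2\,\PP(\tau_{U_n}\le K_n)\,(1+o(1)).$$
On the other hand, the last-visit decomposition (equivalently, iterating the identity in Lemma~6.1) gives, for every $m$, $\PP(\tau_{U_n}\le m)=\mu(U_n)\sum_{i=0}^{m-1}\mu_{U_n}(\tau_{U_n}>i)$, and the integrands are nonincreasing in $i$. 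Hence
$$\PP(\tau_{U_n}\le 2K_n)-\PP(\tau_{U_n}\le K_n)=\mu(U_n)\sum_{i=K_n}^{2K_n-1}\mu_{U_n}(\tau_{U_n}>i)\le K_n\mu(U_n)\,\mu_{U_n}(\tau_{U_n}>K_n).$$
Combining the two displays yields $\PP(\tau_{U_n}\le K_n)(1+o(1))\le K_n\mu(U_n)(\theta+o(1))$, i.e.\ the upper bound, while the lower bound $\PP(\tau_{U_n}\le K_n)\ge K_n\mu(U_n)\mu_{U_n}(\tau_{U_n}>K_n)=K_n\mu(U_n)(\theta+o(1))$ follows directly from monotonicity. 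This establishes the diagonal estimate without invoking the $\hat\alpha_\ell$ hypotheses, after which your transcription of Lemma~\ref{l.3} (and \ref{l.3'}) and the appeal to Lemma~\ref{l.mainlemma} complete the proof.
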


Note that in the proof of Lemma~\ref{l.3}, the bound on II of~\eqref{e.III} holds for both left and right $\phi$-mixing systems, as already demonstrated in Lemma~\ref{l.3'}. On the other hand, for $\theta$ defined 
by~\eqref{e.extremal}, III of~\eqref{e.III} does not exist when $K_n > \kappa_n^2$. By Remark~\ref{rem:bothsides} and~\ref{r.invertible}, we can drop 
the right $\phi$-mixing and the Gibbs-Markov assumption, obtaining the following theorem for left $\phi$-mixing systems that is either invertible or non-invertible:

\begin{theorem}\label{t.leftmixing}
Assume that $T:\bM\to \bM$ is a dynamical system, either invertible or non-invertible, and preserves a measure $\mu$ that is left $\phi$-mixing  with $\phi(k)\le Ck^{p}$ for some  $C>0$ and $p>1$. Let $\{U_n\in\cA^{\kappa_n}\}$ be a nested sequence of sets with $\kappa_n \mu(U_n)^\varepsilon\to 0$ for some $\varepsilon\in(0,1)$. 

Assume that $\theta$ defined by~\eqref{e.extremal} exists for some sequence $\{K_n\}$ with $K_n > \kappa_n^2$. Then the localized escape rate at $\Lambda$ exists and satisfies
$$
\rho(\Lambda) = \theta.
$$
\end{theorem}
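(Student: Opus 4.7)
The plan is to combine Lemma~\ref{l.mainlemma} with a variant of the block argument from Lemma~\ref{l.3} (and its Gibbs-Markov analogue Lemma~\ref{l.3'}), exploiting the assumption $K_n>\kappa_n^2$ to bypass the step that originally forced us to assume either right $\phi$-mixing or Gibbs-Markov.  Concretely, I would first apply Lemma~\ref{l.mainlemma} — which is stated precisely for polynomially left (or right) $\phi$-mixing systems — to reduce the problem to showing
$$
\lim_{n\to\infty}\frac{\PP(\tau_{U_n}\le s_n)}{s_n\mu(U_n)}=\theta,
$$
with $s_n=\floor{\mu(U_n)^{-(1-a)}}$ for $a>0$ small, so in particular $s_n\mu(U_n)\to0$ and $s_n/K_n\to\infty$ (using that $K_n$ must grow sub-polynomially in $\mu(U_n)^{-1}$, which I would verify or build into the choice of $a$; the assumption $\kappa_n\mu(U_n)^\varepsilon\to0$ and $K_n>\kappa_n^2$ together give $K_n\mu(U_n)^{2\varepsilon}\to0$, so it suffices to pick $a$ with $1-a>2\varepsilon$).

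Next, I would run the block decomposition of Lemma~\ref{l.3} with block length $K_n$ in place of the fixed constant $K$.  Setting $r_n=\floor{s_n/K_n}$, $V_q=\{Z_n^{K_n}\circ T^{qK_n}\ge1\}$ and the disjointifying sets $D_q=\{V_q,\,Z_n^{(r_n-q-1)K_n}\circ T^{(q+1)K_n}=0\}$, I estimate
$$
\PP\bigl(Z_n^{(r_n-q-1)K_n}\circ T^{(q+1)K_n}\ge1,\,V_q\bigr)\le \text{I}+\text{II}+\text{III},
$$
exactly as in~\eqref{e.III}.  Term I$\,=2\sqrt{K_n}\mu(U_n)$ is negligible after division by $s_n\mu(U_n)$ since $\sqrt{K_n}/s_n\to0$.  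Term II is bounded, using \emph{left} $\phi$-mixing alone (as in the first estimate of the proof of Lemma~\ref{l.3'}), by $\mu(V_q)\sum_{i\ge\kappa_n}(\mu(U_n)+\phi(i))$, which is $\mu(V_q)\cdot o(1)$ because $\kappa_n\mu(U_n)\to0$ and $\phi^1(\kappa_n)\to0$ by polynomial decay.  The key point is that Term III, which in Lemmas~\ref{l.3} and~\ref{l.3'} forced the right $\phi$-mixing or Gibbs-Markov hypothesis via the approximations $U_n^j$, is now the sum over $i\in[(q+1)K_n+2\sqrt{K_n},\,(q+1)K_n+2\kappa_n]$ — an \emph{empty} range since $\sqrt{K_n}>\kappa_n$.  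This is precisely the simplification flagged in Remarks~\ref{rem:bothsides} and~\ref{r.invertible}.

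Finally, combining the bounds gives $\mu(D_q)=\mu(V_q)(1+o(1))$ uniformly in $q$, hence
$$
\PP(\tau_{U_n}\le s_n)=\sum_{q=0}^{r_n-1}\mu(D_q)=r_n\mu(V_0)(1+o(1)).
$$
By the definition of $\theta$ in~\eqref{e.extremal} and a standard computation (cf.~the relation $\mu(V_0)=\PP(\tau_{U_n}\le K_n)$ together with $\PP(\tau_{U_n}\le K_n)=K_n\mu(U_n)\,\mu_{U_n}(\tau_{U_n}>K_n)(1+o(1))$, which holds because $K_n\mu(U_n)\to0$), $\mu(V_0)=\theta K_n\mu(U_n)(1+o(1))$.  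Since $r_nK_n=s_n(1+o(1))$, dividing by $s_n\mu(U_n)$ and sending $n\to\infty$ yields the desired identity, after which Lemma~\ref{l.mainlemma} closes the argument.  I expect the main obstacle to be bookkeeping: one must coordinate $a$, $\varepsilon$, and the growth of $K_n$ carefully so that all of $s_n\mu(U_n)$, $K_n\mu(U_n)$, $\phi^1(\kappa_n)$ and $K_n/s_n$ tend to zero simultaneously, and one must justify the elementary identity $\mu(V_0)\sim\theta K_n\mu(U_n)$ under only left $\phi$-mixing (without appealing to the $\alpha_1=\theta$ identity that required stronger hypotheses in Section~\ref{s.4.3}).
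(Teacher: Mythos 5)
Your overall strategy is the same as the paper's: the paper proves Theorem~\ref{t.leftmixing} by appealing to Proposition~\ref{p.4.9} (Lemma~\ref{l.3} with $\alpha_1$ replaced by $\theta$, run with block length $K_n$ instead of fixed $K$) and then observing, as you do, that the assumption $K_n>\kappa_n^2$ forces $\sqrt{K_n}>\kappa_n$, so that the summation range of Term III in~\eqref{e.III} is empty. Once III disappears, the only mixing estimate remaining is the one for Term II, which needs only left $\phi$-mixing (this is exactly the point of Remark~\ref{rem:bothsides}). You have identified the correct mechanism and the correct sequence of lemmas; the structure of your argument matches the intended proof.

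However, there is a genuine logical slip in the parenthetical at the start: you claim that $\kappa_n\mu(U_n)^\varepsilon\to0$ together with $K_n>\kappa_n^2$ gives $K_n\mu(U_n)^{2\varepsilon}\to0$. That implication is backwards. The hypothesis $K_n>\kappa_n^2$ is a \emph{lower} bound on $K_n$, while $\kappa_n^2\mu(U_n)^{2\varepsilon}\to0$ bounds $\kappa_n^2$ from above; together they give no upper bound on $K_n$ at all. Consequently $s_n/K_n\to\infty$ does not follow from the hypotheses of Theorem~\ref{t.leftmixing} alone. This constraint is needed both for the block decomposition (you need $r_n=\lfloor s_n/K_n\rfloor\to\infty$) and for the theorem to be non-vacuous, and the paper makes it explicit in Proposition~\ref{p.4.9} ($s_n\mu(U_n)\to0$ and $s_n/K_n\to\infty$). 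So you should take the scaling $K_n\mu(U_n)\to0$ (equivalently, a choice of $s_n$ with $s_n/K_n\to\infty$ and $s_n\mu(U_n)\to0$) as part of the standing hypotheses rather than try to derive it. A second, smaller caveat: the asymptotic $\mu(V_0)=\PP(\tau_{U_n}\le K_n)\sim\theta K_n\mu(U_n)$ is not quite the ``elementary'' identity you describe — the paper invokes~\cite{HV19} Lemma~3 in the fixed-$K$ case, and in the $K_n\to\infty$ case it rests on the Ces\`aro average $\frac1{K_n}\sum_{k=1}^{K_n}\mu_{U_n}(\tau_{U_n}\ge k)$ converging to $\theta$, which requires $\mu_{U_n}(\tau_{U_n}>k)$ to stabilize at $\theta$ for most $k\le K_n$, not merely at $k=K_n$. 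The paper asserts this replacement is straightforward, but it is worth flagging that it is not a consequence of $K_n\mu(U_n)\to0$ alone.
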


\section{Escape rate for open sets: an approximation argument}\label{s.5}

First, observe that 
$$
\{M_t<u_n\} = \{\tau_{U_n} > t\}.
$$
As a result, we have 
$$
\zeta(u_n) = \lim_{t\to\infty}\frac1t |\log \PP(M_t < u_n)| = \lim_{t\to\infty}\frac1t|\log \PP(\tau_{U_n} > t)| = \rho(U_n), 
$$
therefore we have
$$
\zeta(\varphi, \{u_n\}) = \rho(\Lambda, \{U_n\}).
$$

The following proposition allows us to replace $\{U_n\}$ by its cylinder-approximation.
\begin{proposition}\label{p.approx}
	Let $\{U_n\}$, $\{V_n\}$ and $\{W_n\}$ be sequences of nested sets with $V_n\subset U_n\subset W_n$ for each $n$, and $\Lambda = \bigcap_n U_n = \bigcap_n V_n = \cap_n W_n$. Assume that 
	\begin{equation}\label{e.approx}
		\mu(W_n\setminus V_n) = o(1)\mu(V_n),
	\end{equation}
	and 
	$\rho(\Lambda, \{W_n\})=\rho(\Lambda, \{V_n\}) = \alpha$.
	
	Then
	$$
	\rho(\Lambda, \{U_n\}) =\alpha.
	$$
\end{proposition}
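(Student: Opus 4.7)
The plan is to prove Proposition~\ref{p.approx} via a direct two-sided sandwich on the ratios $\rho(\cdot)/\mu(\cdot)$. The two ingredients needed are the monotonicity of the escape rate under set inclusion, and the asymptotic equivalence of the three sequences $\mu(V_n)$, $\mu(U_n)$, $\mu(W_n)$; both come essentially for free from the hypotheses.

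First, I would invoke the elementary monotonicity noted just after~\eqref{e.escaperate}: since $V_n\subset U_n\subset W_n$, we have $\tau_{W_n}\le\tau_{U_n}\le\tau_{V_n}$ pointwise, hence $\PP(\tau_{W_n}>t)\le\PP(\tau_{U_n}>t)\le\PP(\tau_{V_n}>t)$ for every $t$, which gives $\rho(V_n)\le\rho(U_n)\le\rho(W_n)$ whenever the escape rates exist.

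Second, the hypothesis $\mu(W_n\setminus V_n)=o(1)\mu(V_n)$ combined with the inclusions $V_n\subset U_n\subset W_n$ yields
\begin{equation*}
\mu(V_n)\le \mu(U_n)\le \mu(W_n)=(1+o(1))\mu(V_n),
\end{equation*}
so $\mu(V_n)/\mu(U_n)\to 1$ and $\mu(W_n)/\mu(U_n)\to 1$. Rewriting the monotonicity chain as
\begin{equation*}
\frac{\rho(V_n)}{\mu(V_n)}\cdot\frac{\mu(V_n)}{\mu(U_n)}\ \le\ \frac{\rho(U_n)}{\mu(U_n)}\ \le\ \frac{\rho(W_n)}{\mu(W_n)}\cdot\frac{\mu(W_n)}{\mu(U_n)},
\end{equation*}
and passing to the limit, both bounding expressions tend to $\alpha\cdot 1=\alpha$; the squeeze theorem then produces $\rho(\Lambda,\{U_n\})=\alpha$.

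Since the proof is essentially a squeeze, I do not foresee a genuine obstacle. The only subtle point worth flagging is the existence of $\rho(U_n)$ for each $n$: if one does not tacitly assume this, one should instead work with $\liminf_t\tfrac{1}{t}|\log\PP(\tau_{U_n}>t)|$ and $\limsup_t\tfrac{1}{t}|\log\PP(\tau_{U_n}>t)|$ inside the sandwich. The same argument shows both of these equal $\alpha\mu(U_n)(1+o(1))$ asymptotically, which simultaneously yields existence of $\rho(U_n)$ for large $n$ and the desired identity.
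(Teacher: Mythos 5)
Your proof is correct and takes essentially the same approach as the paper's: monotonicity of escape rates under set inclusion ($V_n\subset U_n\subset W_n$ gives $\rho(V_n)\le\rho(U_n)\le\rho(W_n)$), asymptotic equivalence of $\mu(V_n),\mu(U_n),\mu(W_n)$ from~\eqref{e.approx}, and the squeeze theorem. Your final remark about replacing $\rho(U_n)$ by the $\liminf$/$\limsup$ to avoid tacitly assuming its existence is a genuine refinement that the paper's proof leaves implicit.
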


\begin{proof}
	$V_n\subset U_n\subset W_n$ implies that $\tau_{W_n}\ge \tau_{V_n} \ge \tau_{U_n}$ and therefore
	$$
	\rho(W_n)\ge \rho(U_n) \ge \rho(V_n). 
	$$ 
	On the other hand,~\eqref{e.approx} means that $\mu(W_n)/\mu(V_n)\to 1$. We thus obtain
	$$
	\rho(\Lambda, \{W_n\})\ge \rho(\Lambda, \{U_n\}) \ge \rho(\Lambda, \{V_n\}),
	$$
	and the proposition follows from the squeeze theorem.
\end{proof}

\begin{proof}[Proof of Theorem~\ref{m.3}]
	For the sequence $\{r_n\}$ given in Assumption~\ref{a.1}, we write $\kappa_n$ for the smallest integer such that $\diam \cA^{\kappa_n} \le r_n$. Then consider 
	$$
	V_n = \cup_{A\in\cA^{\kappa_n}, A\subset U_n} A, \hspace{2cm} 	W_n = \cup_{A\in\cA^{\kappa_n}, A\cap  U_n\ne\emptyset} A.
	$$
	Clearly we have $V_n\subset U_n\subset W_n$ for each $n$. Moreover, the choice of $\kappa_n$ gives
	$$
	U^i_n\subset V_n, \hspace{2cm} W_n\subset U^o_n.
	$$
	Combine this with~\eqref{e.5}, we have $\mu(W_n\setminus V_n) = o(1)\mu(V_n).$
	
	Let us write $\hat\alpha_\ell^*$, $* = U,V,W$ for $\hat\alpha_\ell$ defined using $\{U_n\},\{V_n\},\{W_n\}$, respectively. Then it is proven in~\cite[Lemma 5.6]{FY19} that 
	$$
	\hat\alpha_\ell^V = \hat\alpha_\ell^U =\hat\alpha_\ell^W. 
	$$
	In particular, $\sum_\ell \ell\hat\alpha_\ell^U<\infty$ implies that the same holds for $\hat\alpha_\ell^*$, $* = V, W$, and the value of $\alpha_1$ defined by $\{V_n\}, \{U_n\}, \{W_n\}$ are equal.
	
	It remains to show that $\{V_n\}$ and $\{W_n\}$ are good neighborhood systems. (1) of Definition~\ref{d.2} holds due to (a) in Theorem~\ref{m.3}. For (2) of Definition~\ref{d.2}, observe that 
	$$
	\mu(V_n^j) = \mu\left(\bigcup_{A\in\cC_j(V_n)} A\right) \le \mu(V_n) + \mu\left( \bigcup_{A\in\cA^j, A\cap  B_{r_n}(\partial U_n)\ne\emptyset} A\right) \le \mu(V_n) + Cj^{-p'},
	$$
	thanks to (b) in Theorem~\ref{m.3}. A similar argument shows that $\{W_n\}$ is also a good neighborhood system.
	
	Now we can apply Theorem~\ref{m.2} or~\ref{m.gibbsmarkov} on $\{V_n\}$ and $\{W_n\}$ to obtain 
	$$
	\rho(\Lambda, \{W_n\})=\rho(\Lambda, \{V_n\}) = \alpha_1.
	$$
	It then follows from Proposition~\ref{p.approx} that $\rho(\Lambda, \{U_n\}) = \alpha_1$. This concludes the proof of Theorem~\ref{m.3}.
\end{proof}

Similar to Theorem~\ref{t.leftmixing}, when the extremal index $\theta$ is defined as 
$$
\theta = \lim_{n\to\infty}\mu_{U_n}(\tau_{U_n}>K_n)
$$
for some sequence $K_n > \kappa_n^2$, the conditions on the right $\phi$-mixing and $V_n^j$ can be dropped. We thus obtain the following version of Theorem~\ref{t.leftmixing} for open sets $\{U_n\}$:

\begin{theorem}\label{t.leftmixing.open}
	Assume that $T:\bM\to \bM$ is a dynamical system, either invertible or non-invertible, and preserves a measure $\mu$ that is left $\phi$-mixing  with $\phi(k)\le Ck^{p}$ for some  $C>0$ and $p>1$.
	
	Let $\vp:\bM\to \RR\cup\{+\infty\}$ be a  continuous function achieving its maximum on a measure zero set $\Lambda$. Let $\{u_n\}$ be a non-decreasing sequence of real numbers with $u_n\nearrow \sup \vp$, such that the open sets $U_n$ defined by~\eqref{e.Un} satisfy Assumption~\ref{a.1} for a sequence $r_n$ that decreases to $0$ as $n\to\infty$.
	  Let $\kappa_n$ be the smallest positive integer for which $\diam\cA^{\kappa_n}\le r_n$ and assume that:
	\begin{enumerate}[label=(\roman*)]
		\item $\kappa_n\mu(U_n)^\varepsilon\to 0$ for some $\varepsilon \in (0,1)$;
		\item the extremal index $\theta$ defined by~\eqref{e.extremal} exists for some sequence $K_n > \kappa_n^2$.
	\end{enumerate}
	Then the exceedance rate of $\vp$ along $\{u_n\}$ exists and satisfies
	$$
	\zeta(\vp,\{u_n\}) = \rho(\Lambda, \{U_n\}) =  \theta.
	$$
	
\end{theorem}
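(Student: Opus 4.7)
The approach mirrors the proof of Theorem~\ref{m.3}, reducing the open-set case to the cylinder case handled by Theorem~\ref{t.leftmixing} via inner and outer cylinder approximations, combined with a sandwich on the extremal index that avoids imposing any additional boundary regularity on $U_n$. As in the opening of Section~\ref{s.5}, the identity $\{M_t<u_n\}=\{\tau_{U_n}>t\}$ gives $\zeta(\varphi,\{u_n\})=\rho(\Lambda,\{U_n\})$, so it suffices to prove $\rho(\Lambda,\{U_n\})=\theta$. Define the inner and outer cylinder approximations
$$V_n = \bigcup\{A\in\cA^{\kappa_n}: A\subset U_n\}, \qquad W_n=\bigcup\{A\in\cA^{\kappa_n}: A\cap U_n\ne\emptyset\},$$
so that $V_n\subset U_n\subset W_n$ and $V_n,W_n\in\sigma(\cA^{\kappa_n})$. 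Since $\diam\cA^{\kappa_n}\le r_n$, one has $U_n^i\subset V_n$ and $W_n\subset U_n^o$, and Assumption~\ref{a.1} yields $\mu(W_n\setminus V_n)=o(\mu(U_n))$, whence $\mu(V_n),\mu(W_n)=(1+o(1))\mu(U_n)$; hypothesis~(i) transfers verbatim to both sequences.

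The key step is to sandwich the extremal index along $\{K_n\}$. The inclusion $V_n\subset U_n$ gives $\{\tau_{V_n}>K_n\}\supset\{\tau_{U_n}>K_n\}$, whence
$$\mu(V_n\cap\{\tau_{V_n}>K_n\})\ge\mu(V_n\cap\{\tau_{U_n}>K_n\})\ge\mu(U_n\cap\{\tau_{U_n}>K_n\})-\mu(U_n\setminus V_n);$$
dividing by $\mu(V_n)$ and using $\mu(U_n\setminus V_n)=o(\mu(U_n))$ together with $\mu(V_n)/\mu(U_n)\to1$ gives $\liminf_n\mu_{V_n}(\tau_{V_n}>K_n)\ge\theta$. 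Symmetrically, $U_n\subset W_n$ gives $\{\tau_{W_n}>K_n\}\subset\{\tau_{U_n}>K_n\}$, hence $\mu(W_n\cap\{\tau_{W_n}>K_n\})\le\mu(U_n\cap\{\tau_{U_n}>K_n\})+\mu(W_n\setminus U_n)$, which yields $\limsup_n\mu_{W_n}(\tau_{W_n}>K_n)\le\theta$.

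To conclude, I pass to subsequences: along any $\{n_k\}$ for which $\mu_{V_{n_k}}(\tau_{V_{n_k}}>K_{n_k})$ converges to some $\theta'$, Theorem~\ref{t.leftmixing} applied to $\{V_{n_k}\}$ gives $\rho(V_{n_k})/\mu(V_{n_k})\to\theta'$; since $\theta'\ge\theta$ by the previous paragraph, this forces $\liminf_n\rho(V_n)/\mu(V_n)\ge\theta$, and a parallel argument gives $\limsup_n\rho(W_n)/\mu(W_n)\le\theta$. The monotonicity $\rho(V_n)\le\rho(U_n)\le\rho(W_n)$ combined with $\mu(V_n)\sim\mu(U_n)\sim\mu(W_n)$ then forces
$$\theta\le\liminf_n\rho(U_n)/\mu(U_n)\le\limsup_n\rho(U_n)/\mu(U_n)\le\theta,$$
so $\rho(\Lambda,\{U_n\})=\theta$. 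The main technical point I foresee is ensuring that the proof of Theorem~\ref{t.leftmixing} (which proceeds via Lemma~\ref{l.mainlemma} and the block arguments of Lemma~\ref{l.blocks}) remains valid along an arbitrary subsequence; this should be straightforward since each of its estimates depends only on the individual terms of the sequence being fed in.
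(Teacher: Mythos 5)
Your proof is correct and follows the route the paper intends: reduce the open-set sequence to inner and outer cylinder approximations $V_n,W_n\in\sigma(\cA^{\kappa_n})$, apply Theorem~\ref{t.leftmixing} to these, and conclude via monotonicity of $\rho$ and $\mu(V_n)\sim\mu(U_n)\sim\mu(W_n)$, exactly as in the proof of Theorem~\ref{m.3}. The one place you deviate, and it is a genuine (small) improvement, is the transfer of the extremal index from $\{U_n\}$ to $\{V_n\}$ and $\{W_n\}$. The paper, in the $\alpha_1$ setting of Theorem~\ref{m.3}, invokes \cite[Lemma~5.6]{FY19} to conclude $\hat\alpha_\ell^V=\hat\alpha_\ell^U=\hat\alpha_\ell^W$, and for Theorem~\ref{t.leftmixing.open} it is left implicit that an analogue holds for $\theta$ as defined along the moving cut-off $K_n$. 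Your $\liminf/\limsup$ sandwich, using only $\{\tau_{V_n}>K_n\}\supset\{\tau_{U_n}>K_n\}\supset\{\tau_{W_n}>K_n\}$ and $\mu(W_n\setminus V_n)=o(1)\mu(U_n)$, gives exactly the one-sided estimates that are needed, with no appeal to an external lemma; and because the limits of $\mu_{V_n}(\tau_{V_n}>K_n)$ and $\mu_{W_n}(\tau_{W_n}>K_n)$ are not asserted to exist a priori, the passage to a further subsequence on which they converge before invoking Theorem~\ref{t.leftmixing} is the right way to close the argument. Two small points worth flagging if you write this up: first, one should note explicitly that $V_n,W_n$ are unions of $\kappa_n$-cylinders and that the same sequence $K_n>\kappa_n^2$ serves for all three families; second, as in the paper's own proof of Theorem~\ref{m.3}, the sequence $\{V_n\}$ need not be literally nested (a $\kappa_{n+1}$-cylinder contained in $U_{n+1}$ need not sit inside a $\kappa_n$-cylinder contained in $U_n$), but this is immaterial since the estimates in Lemmas~\ref{l.blocks} and~\ref{l.mainlemma} are term-by-term and only use $\mu(V_n)\to 0$.
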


\section{The conditional escape rate}\label{s.6}
In this section we will prove Theorem~\ref{m.4}.

First we establish the following relation between the hitting times and return times. 

\begin{lemma}
    For any set $U\subset M$ with $\mu(U)>0,$ let $A_k:=\{x\in \bM|\tau_U\geq k\},$ and $B_k:=\{x\in U|\tau_U\geq k\}=A_k\cap U.$ Then we have \begin{equation}
       \mu_U(A_k)\mu(U) =\mu(B_k)=\mu(A_k)-\mu(A_{k+1})
    \end{equation}
\end{lemma}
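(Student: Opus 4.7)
The first equality is essentially definitional: since $A_k\cap U = B_k$, the definition of the conditional measure $\mu_U(\cdot) = \mu(\cdot \cap U)/\mu(U)$ yields $\mu_U(A_k)\mu(U) = \mu(A_k\cap U) = \mu(B_k)$ immediately. So no work is needed there.

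For the second equality $\mu(B_k) = \mu(A_k) - \mu(A_{k+1})$, my plan is to decompose the preimage $T^{-1}(A_k)$ according to whether the point itself lies in $U$ or not, and then apply the $T$-invariance of $\mu$. Specifically, I first note that
$$A_{k+1} = \{x : T^j(x)\notin U,\ j=1,\dots,k\} = T^{-1}(U^c)\cap T^{-1}(A_k) = T^{-1}(U^c\cap A_k),$$
because $\tau_U(x)\geq k+1$ is equivalent to $T(x)\notin U$ together with $\tau_U(T(x))\geq k$. On the other hand, $T^{-1}(B_k) = T^{-1}(U\cap A_k)$ by definition of $B_k$. Splitting $A_k = (U\cap A_k)\sqcup(U^c\cap A_k)$ and taking preimages gives the disjoint decomposition
$$T^{-1}(A_k) = T^{-1}(B_k) \sqcup A_{k+1}.$$

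Taking the measure of both sides and using $\mu(T^{-1}(A_k)) = \mu(A_k)$ then yields $\mu(A_k) = \mu(B_k) + \mu(A_{k+1})$, which rearranges to the second equality.

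There is no real obstacle here; the only point demanding care is the verification of the disjoint decomposition above (in particular, that $A_{k+1}$ is indeed $T^{-1}(U^c\cap A_k)$ rather than just a subset), after which $T$-invariance does all the work. No mixing, ergodicity, or other hypotheses on $U$ or $T$ beyond measure preservation are needed, which is consistent with the statement's generality and with its later use in proving Theorem~\ref{m.4}.
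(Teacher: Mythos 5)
Your proof is correct and is essentially the same argument as the paper's: both rest on the identification $A_{k+1}=T^{-1}(U^c\cap A_k)$ together with $T$-invariance of $\mu$, the paper applying invariance once to $\mu(A_{k+1})$ directly while you phrase it as a disjoint decomposition of $T^{-1}(A_k)$. No substantive difference.
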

\begin{proof}
    By definition we have $A_{k+1}\subset A_k.$ Thus, we compute 
    \begin{align*}
        \mu(A_{k+1})&=\mu(\cap_{j=1}^k T^{-j}U^c)\\
        &=\mu(T^{-1}(\cap_{j=0}^{k-1} T^{-j}U^c))\\
        &=\mu(U^c\cap_{j=1}^{k-1}T^{-j}U^c)\\
        &=\mu(U^c\cap A_k)\\
        &=\mu(A_k)-\mu(U\cap A_k)\\
        &=\mu(A_k)-\mu(B_k),
    \end{align*}
    where the third equality follows from the invariance of $\mu.$
\end{proof}

Next, we need the following arithmetic lemma on the exponential decay rate for a sequence of real numbers $\{a_n\}$ and its difference sequence $\{b_n = a_n - a_{n+1}\}$.
\begin{lemma}
    Suppose that $\{a_n\}$ is a decreasing sequence of positive real numbers with $a_n\searrow 0$.  Let $b_n=a_n-a_{n+1}.$ Suppose, also, that $b_n$ is monotonically decreasing. Then the following statements are equivalent:
    \begin{enumerate}
    \item $\lim_{n\ra\infty}-\frac{\log a_n}{n}=\vartheta$ for some $\vartheta>0$;
    \item $\lim_{n\ra\infty}-\frac{\log b_n}{n}=\vartheta$ for some $\vartheta>0$.
    \end{enumerate}
\end{lemma}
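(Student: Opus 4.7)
\medskip

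\noindent\textbf{Proof proposal.}
The key structural identity is that, since $a_n \searrow 0$, telescoping gives
$$
a_n \;=\; \sum_{k=n}^{\infty} (a_k - a_{k+1}) \;=\; \sum_{k=n}^{\infty} b_k.
$$
From this, together with the monotonicity of $b_n$, all the required inequalities between $a_n$ and $b_n$ follow, and the equivalence reduces to elementary estimates on exponential rates. The plan is to prove both implications with the same $\vartheta$.

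\smallskip

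\noindent\emph{Step 1: (2) $\Rightarrow$ (1).} Suppose $-\log b_n / n \to \vartheta$. Since $b_n = a_n - a_{n+1} \le a_n$, we immediately get $-\log a_n \le -\log b_n$, hence $\limsup_n -\log a_n / n \le \vartheta$. For the reverse direction, fix $\varepsilon \in (0,\vartheta)$; then for all $k$ sufficiently large one has $b_k \le e^{-(\vartheta-\varepsilon)k}$, so summing a geometric tail gives
$$
a_n \;=\; \sum_{k\ge n} b_k \;\le\; \frac{e^{-(\vartheta-\varepsilon)n}}{1-e^{-(\vartheta-\varepsilon)}},
$$
which yields $\liminf_n -\log a_n / n \ge \vartheta - \varepsilon$. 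Letting $\varepsilon \to 0$ closes the argument.

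\smallskip

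\noindent\emph{Step 2: (1) $\Rightarrow$ (2).} Assume $-\log a_n / n \to \vartheta$. Again $b_n \le a_n$ gives $\liminf_n -\log b_n / n \ge \vartheta$. The main point is a matching upper bound on $-\log b_n$, and here the monotonicity of $b_n$ is essential: since $b_k \le b_n$ for all $k \ge n$,
$$
a_n - a_{2n} \;=\; \sum_{k=n}^{2n-1} b_k \;\le\; n\, b_n,
$$
so $b_n \ge (a_n - a_{2n})/n$. Because $-\log a_{2n}/(2n) \to \vartheta$ as well, we have $a_{2n}/a_n \to 0$, whence $a_n - a_{2n} = a_n(1+o(1))$. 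Taking logs,
$$
-\log b_n \;\le\; -\log a_n + \log n - \log(1+o(1)),
$$
so $\limsup_n -\log b_n / n \le \vartheta$, completing the equivalence.

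\smallskip

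\noindent\emph{Main obstacle.} The only delicate point is the lower bound on $b_n$ in Step~2; here the hypothesis that $b_n$ is monotonically decreasing is crucial, since without it one could have $b_n$ occasionally much smaller than the average slope of $a_n$, breaking the matching exponential rate. The block estimate $a_n - a_{2n} \le n b_n$ is the cleanest way to exploit this monotonicity, and the fast decay of $a_{2n}$ relative to $a_n$ (a consequence of the exponential rate assumption) absorbs the polynomial factor $n$ into the $o(1)$ correction in the exponential rate.
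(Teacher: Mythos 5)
Your proof is correct and rests on the same two ingredients as the paper's: the telescoping identity $a_n = \sum_{k\ge n} b_k$ and a block estimate $a_n - a_{2n} \le n\,b_n$ that uses the monotonicity of $(b_n)$. One small improvement worth noting: for the $\liminf$ direction in Step~2 you use the trivial observation $b_n \le a_n$, which immediately gives $\liminf_n -\log b_n/n \ge \vartheta$; the paper instead derives this bound via the comparison $b_{\gamma n}(\gamma-1)n \le a_n - a_{\gamma n}$ for a parameter $\gamma>1$ and then lets $\gamma \to 1$, which is more roundabout. Your use of a fixed doubling factor (comparing $a_n$ to $a_{2n}$) is exactly what the monotonicity buys, and combined with $b_n\le a_n$ it closes both directions in (1) $\Rightarrow$ (2) without any auxiliary limit in $\gamma$. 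The argument in Step~1 (geometric tail summation) matches what the paper calls ``obvious'' and is fine.
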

\begin{remark}
    Note that there are counter-examples for which the statement of the lemma fails without the monotonicity assumption on the sequence  $\{b_n\}$.
\end{remark}
\begin{proof}
First note that $a_n = \sum_{k\ge n} b_k$; therefore (2) $\implies $ (1) is obvious. 
It thus remains to show that (1) $\implies$ (2).
Let $\gamma>1$ be fixed.
Since by assumption the limit $\lim_{n\to\infty}\frac1n|\log a_n|=\vartheta$ exists and $\vartheta>0$,
 there is an $N$ so that 
$$
\left|\frac{\log a_n}n+\vartheta\right|<\varepsilon \;\;\; \forall n\ge N,
$$
for some positive $\varepsilon<(\gamma-1)\vartheta/4$.
Hence
$$
\left|\frac{\log a_n}n-\frac{\log a_{\gamma n}}{\gamma n}\right|<2\varepsilon \;\;\; \forall n\ge N
$$
which implies 
$$
a_{\gamma n}<\left(a_ne^{2\varepsilon n}\right)^\gamma<\frac12a_n
$$
for all $n$ large enough (assuming $\varepsilon<\frac\vartheta2$).
Since 
$$
a_n-a_{\gamma n}=\sum_{j=n}^{\gamma n-1}b_j
$$
we get by monotonicity of the sequence $b_j$
$$
b_{\gamma n}(\gamma-1)n\le a_n-a_{\gamma n}\le b_n(\gamma-1)n
$$
and consequently $a_n\le 2b_n(\gamma-1)n$.
Hence
$$
\frac{\log b_{\gamma n}}n +\frac{\log(\gamma-1)n}n
\le\frac{\log a_n}n
\le \frac{\log b_n}n+\frac{\log 2(\gamma-1)n}n
$$
which in the limit $n\to \infty$ yields
$$
\limsup_{n\to\infty}\frac{\log b_n}n\le -\frac\vartheta\gamma
$$ 
and 
$$
-\vartheta\le \liminf_{n\to\infty}\frac{\log b_n}n.
$$
Since this applies to every $\gamma>1$ we obtain that 
$\lim_{n\to\infty}\frac1n\log b_n=-\vartheta$.
\end{proof}

\begin{proof}[Proof of Theorem~\ref{m.4}]

The theorem now follows with $a_k=\mu(A_k) = \PP(\tau_U\ge k)$, and $b_k=\mu(U)\mu(B_k) = \mu(U)\PP_U(\tau_U\ge k).$ 

\end{proof}

\section{Escape rate under inducing}\label{s.7}

In this section, we will state a general theorem for the local escape rate under inducing. For this purpose, we consider  a measure preserving dynamical system $(\tom, \tT,\tmu)$  with $\tmu$ being a probability measure. Given a measurable function $R: \tom \to \ZZ^+$ consider the space $\Omega = \tom\times \ZZ^+ /\sim$ with the equivalence relation $\sim$ given by
$$
(x,R(x))\sim (\tT(x), 0).
$$
Define {\em the (discrete-time) suspension map over $\tom$ with roof function $R$} as the measurable map $T$ on the space $\Omega$ acting by
$$\
T(x,j)=\left\{\begin{array}{ll}(x,j+1) &\mbox{if } j<R(x)-1,\\
(\hat{T}x,0)&\mbox{if } j=R(x)-1.\end{array}\right.
$$
We will call $\Omega$ a {\em tower over $\tom$} and refer to the set $\Omega_k:=\{(x,k): x\in\tom, k<R(x)\}$ as the {\em $k$th floor} where
$\tom$ can be naturally identified with the $0$th floor called {\em the base of the tower}.

For $0\le k < i$, set $\Omega_{k,i} = \{(x,k): R(x) = i\}$. The map
$$
\Pi: (x,k)\mapsto x
$$
is naturally viewed as a projection from the tower $\Omega$ to the base $\tom$ and for any given set $U\subset\Omega$ we will write
$$
\tU = \Pi(U).
$$

The measure $\tmu$ can be lifted to a measure $\hat\mu$ on $\Omega$ by
$$
\hat\mu(A) = \sum_{i=1}^{\infty}\sum_{k=0}^{i-1} \tmu(\Pi(A\cap \Omega_{k,i})).
$$
It is easy to verify that $\hat\mu$ is $T$-invariant and if $\tmu(R) = \int R\, d\tmu<\infty$ then $\hat\mu$ is a finite measure. In this case, the measure
$$
\mu = \frac{\hat\mu}{\tmu(R)}
$$
is a $T$-invariant probability measure on $\Omega$.





We write $\tilde U = \Pi(U)\subset \tom$, $\tilde\Lambda = \cap_n\tilde U_n$  and define $\tilde\rho(\tilde\Lambda, \{\tilde U_n\})$ to be the localized escape rate at $\tilde\Lambda$ for the system $(\tom, \tT,\tmu)$. The following theorem relates the escaped rate of the base system with that of the suspension. A similar result is obtained for continuous suspensions under the assumption that $R$ is bounded, see~\cite{DK}.

\begin{theorem}\label{t.suspension}
	Let $(\Omega, T, \mu)$ be a discrete-time suspension over an ergodic measure preserving system $(\tom, \tT, \tmu)$ with a roof function $R$ satisfying the following assumptions:
	\begin{enumerate}
		\item {\em $R$ has exponential tail:} there exists $C, c>0$ such that $\tmu(R>n) \le Ce^{-cn}$;
		\item {\em exponential large deviation estimate:} for every $\varepsilon>0$ small, there exists $C_\varepsilon, c_\varepsilon>0$ such that the set
		$$
		B_{\varepsilon, k} = \left\{y\in \tom:  \left|\frac{1}{n}\sum_{j=0}^{n-1}R(\tilde{T}^jy_0) - \frac{1}{\mu(\Omega_0)} \right|>\varepsilon \mbox{ for some } n\ge k\right\},
		$$	
		satisfies $\tilde\mu(B_{\varepsilon,k}) \le C_\varepsilon e^{-c_\varepsilon k}$.
	\end{enumerate}
	Then for every nested sequence 	$\{U_n\}$, we have 
	$$
	\rho(\Lambda, \{U_n\}) = \tilde \rho(\tilde\Lambda, \{\tilde U_n\}).
	$$ 
\end{theorem}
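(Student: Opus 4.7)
The plan is to relate the hitting time to $U_n$ on the tower $\Omega$ with the hitting time to $\tilde U_n$ on the base $\tom$ via Birkhoff sums of the roof function $R$, and then use the two exponential assumptions to transfer the escape rate from the base to the tower. Assume (as in the setting of Theorem~\ref{m.5}) that each $U_n$ sits inside the base floor $\Omega_0$, so that $\Pi$ identifies $U_n$ with $\tilde U_n \subset \tom$. For any $y = (x, k) \in \Omega$ the $T$-orbit of $y$ meets the base precisely at times $S_m(x) - k$, $m \ge 1$, where $S_m(x) = \sum_{j=0}^{m-1} R(\tilde T^j x)$. A direct check (valid also at the edge cases $x \in \tilde U_n$ or $k = 0$) gives the identity
$$
\tau_{U_n}(y) = S_{\tilde\tau_{\tilde U_n}(x)}(x) - k,
$$
so that, writing $\tilde\tau = \tilde\tau_{\tilde U_n}$ for brevity,
$$
\hat\mu(\tau_{U_n} > t) = \sum_{k=0}^{\infty} \tmu\!\left(\{x \in \tom : R(x) > k,\ S_{\tilde\tau(x)}(x) > t+k\}\right).
$$

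The core step is to replace $S_{\tilde\tau}(x)$ by $\tmu(R)\,\tilde\tau(x)$ using assumption (2). Fix $\varepsilon > 0$ and a cutoff $K$. Outside the bad set $B_{\varepsilon, K}$ and on $\{\tilde\tau \ge K\}$, assumption (2) gives $(\tmu(R)-\varepsilon)\tilde\tau \le S_{\tilde\tau} \le (\tmu(R)+\varepsilon)\tilde\tau$; combined with the crude bound $\tmu(\tilde\tau < K) \le K\tmu(\tilde U_n)$, this yields
$$
\tmu\!\left(\tilde\tau > \tfrac{t}{\tmu(R)-\varepsilon}\right) - E(t,K)\ \le\ \tmu(S_{\tilde\tau} > t)\ \le\ \tmu\!\left(\tilde\tau > \tfrac{t}{\tmu(R)+\varepsilon}\right) + E(t,K),
$$
where $E(t,K) = \tmu(B_{\varepsilon, K}) + K\tmu(\tilde U_n)$. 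For the lower bound on $\hat\mu(\tau_{U_n} > t)$ I keep only the $k = 0$ term: $\hat\mu \ge \tmu(S_{\tilde\tau} > t)$. For the upper bound I use $\hat\mu \le \int R\,\mathbb{I}_{\{S_{\tilde\tau} > t\}}\, d\tmu$ and split into $\{R \le M\}$ (contributing at most $M\tmu(S_{\tilde\tau} > t)$) and $\{R > M\}$ (contributing $\lesssim e^{-cM/2}$ by assumption (1)). Picking both $K$ and $M$ to grow linearly in $t$ with sufficiently small slopes makes every error term strictly smaller, in the exponential-in-$t$ sense, than $\tmu(\tilde\tau > t/(\tmu(R) \pm \varepsilon))$.

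Taking $\tfrac{1}{t}|\log(\cdot)|$, letting $t \to \infty$, and then $\varepsilon \to 0$ yields $\rho(U_n) = \tilde\rho(\tilde U_n)/\tmu(R)$. Combined with the identity $\mu(U_n) = \tmu(\tilde U_n)/\tmu(R)$ (valid since $U_n \subset \Omega_0$), this gives $\rho(U_n)/\mu(U_n) = \tilde\rho(\tilde U_n)/\tmu(\tilde U_n)$, and letting $n \to \infty$ yields the desired equality $\rho(\Lambda, \{U_n\}) = \tilde\rho(\tilde\Lambda, \{\tilde U_n\})$. The main obstacle is the quantitative balance between the three error sources: the large-deviation bad set $B_{\varepsilon, K}$, the short-hitting event $\{\tilde\tau < K\}$, and the large-$R$ tail. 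All three must decay in $t$ faster than the target exponential rate $\tilde\rho(\tilde U_n)/(\tmu(R) \pm \varepsilon)$, which is itself small (comparable to $\tmu(\tilde U_n)$) for large $n$; this slack is what allows the linear-in-$t$ choices of $K$ and $M$ to succeed. The exponential strength of both (1) and (2) is essential here: with only polynomial decay in either, the corresponding error could dominate and the transfer of the exponential escape rate would break down.
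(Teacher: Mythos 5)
Your approach is essentially the same as the paper's: start from the identity $\tau_{U_n}(y) = S_{\tilde\tau_{\tilde U_n}(y_0)}(y_0) - m$, use the exponential large-deviation estimate to replace the Birkhoff sum $S_{\tilde\tau}$ by $\tmu(R)\tilde\tau$ outside a small bad set, and use the exponential tail of $R$ to control the contribution of high floors. The only real structural difference is that the paper sandwiches an auxiliary set $A_t$ between two sets $A_t^{\pm}$ defined directly in terms of $\tilde\tau$, whereas you sum explicitly over floors and use the Markov-type bound $\hat\mu(\tau_{U_n}>t)\le\int R\,\ind_{\{S_{\tilde\tau}>t\}}\,d\tmu$; these are interchangeable ways to package the floor sum and neither changes the essence of the argument.

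However, there is a concrete gap in the way you control the ``short-return'' contribution, i.e.\ the event $\{\tilde\tau < K,\ S_{\tilde\tau}>t\}$. You bound its measure by the crude estimate $\tmu(\tilde\tau<K)\le K\tmu(\tilde U_n)$, which is independent of $t$. When you then take $K$ to grow linearly in $t$ (as you propose), the term $K\tmu(\tilde U_n)$ grows linearly in $t$, and when $K$ is kept fixed, it is a nonzero constant. In either case it cannot be ``exponentially smaller in $t$'' than the target $\tmu(\tilde\tau > t/(\tmu(R)\pm\varepsilon))$, so the upper bound on $\hat\mu(\tau_{U_n}>t)$ collapses to a quantity that does not decay exponentially, and the corresponding lower bound on $\rho(U_n)$ becomes trivial. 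The correct observation, which is also the one hiding inside the paper's choice of $A_t$, is that the short-return event is eliminated automatically by the large-deviation control together with the monotonicity of $n\mapsto S_n$ (here one uses $R\ge 1$): fix $\delta<1/(\tmu(R)+\varepsilon)$ and take $K=\lceil\delta t\rceil$; then on $B_{\varepsilon,K}^c$ one has $S_{\tilde\tau}\le S_K\le(\tmu(R)+\varepsilon)K<t$ whenever $\tilde\tau<K$, so $\{S_{\tilde\tau}>t\}\cap B_{\varepsilon,K}^c\subset\{\tilde\tau>t/(\tmu(R)+\varepsilon)\}$, leaving only the error $\tmu(B_{\varepsilon,K})\lesssim e^{-c_\varepsilon\delta t}$. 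Replacing your crude bound with this argument closes the gap; the rest of your estimates (the tail-of-$R$ bound on high floors, the Markov-type floor sum, and the final two-step limit in $t$ then $n$, with $n$ chosen large so that $\tilde\rho(\tilde U_n)$ falls below the exponential rates $c\varepsilon$ and $c_\varepsilon\delta$) then go through as you describe.
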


\begin{proof}
	The result of this theorem is in fact hidden in the proof of Theorem 4 of~\cite{HY} and Theorem 3.2 (1) in~\cite{BDT}. We include the proof here for completeness.
	
	For every $y= (x,m)\in\Omega$, we take $y_0 = x\in \tom$.	Then we have
	\begin{equation}\label{e.0}
	\tau_{U_n}(y) =-m+\sum_{j=0}^{\tilde\tau_{\tilde U_n}(y_0)-1} R(\tilde{T}^j(y_0)),
	\end{equation}
	where $\tilde\tau$ is the return times defined for the system $(\tom, \tT,\tmu)$. 
	By the Birkhoff ergodic theorem on $(\tom,\tilde{T},\tilde\mu)$, we see that
	$$
	\frac{1}{n}\sum_{j=0}^{n-1}R(\tilde{T}^jy_0)\to \int_{\tom} R(y)\,d\tilde\mu(y) = \frac{1}{\mu(\Omega_0)},
	$$
	where we apply the Kac's formula on the last equality and use the fact that $\mu$ is the lift of $\tilde\mu$.
	
	On the other hand, since the return time function $R$ has exponential tail, we get, for each $\varepsilon>0$ and $t$ large enough,
	$$
	\mu((x,m):m>\varepsilon t)\lesssim e^{-c\varepsilon t}.
	$$
	To simplify notation, we introduce the set ($n$ is fixed) 
	$$A_t=\left\{ y = (x,m):m<\varepsilon t,\sum_{j=0}^{\tilde\tau_{\tilde U_n}(y_0)-1} R(\tilde {T}^j(y_0))>(1+\varepsilon)t\right\}
	\cap B^c_{\varepsilon,k}.$$ 
	Combine~\eqref{e.0} with the previous  estimates on $B_{\varepsilon,k}$, for  $k=t(1+\varepsilon)$ we get
	\begin{equation}\label{e.1}
	\left|\mu(\tau_{\tilde U_n}>t) - \mu(A_t)\right|
	\lesssim e^{-c\varepsilon t}+e^{-c_\varepsilon (1+\varepsilon) t}.
	\end{equation}
	
	Note that  $A_t$ contains the set
	$$
	A_t^- = \left\{y:m<\varepsilon t, \tilde \tau_{\tilde U_n}(y_0)>\frac{(1+\varepsilon)t}{\mu^{-1}(\Omega_0)-\varepsilon}\right\},
	$$
	and is contained in 
	$$
	A_t^+ = \left\{y:m<\varepsilon t, \tilde\tau_{\tilde U_n}(y_0)>\frac{(1+\varepsilon)t}{\mu^{-1}(\Omega_0)+\varepsilon}\right\}.
	$$
	Now we are left to estimate $\mu(A_t^\pm)$. Since $\mu$ is the lift of $\tilde \mu$, we have
	\begin{align}\label{e.25}
	\mu(A^\pm_t) = &\frac{1}{\tilde\mu(R)}\sum_{j=0}^{\infty}\sum_{i=0}^{\min(\varepsilon t, R_j)}\tilde\mu(T^{-i}A^\pm_t\cap \Omega_{0,i})\\\nonumber
	=&\mu(\Omega_0)(1+\mathcal{O}(\varepsilon t))\tilde\mu(\tilde{A}^\pm_t),
	\end{align}
	where
	$$
	\tilde{A}^\pm_t = \left\{y_0\in\Omega_0: \tilde\tau_{\tilde U_n}(y_0)>\frac{(1+\varepsilon)t}{\mu^{-1}(\Omega_0)\pm\varepsilon}\right\}.
	$$
	
	Let $\alpha = \tilde\rho(\tilde\Lambda, \{\tilde U_n\})$. Then we have (recall that $\tilde\mu(\tilde U_n) \mu(\Omega_0)=\mu(U_n)$)
	$$
	\lim_{n\to\infty}\lim_{t\to\infty}\frac{1}{t\mu(U_n)}|\log\tilde\mu(\tilde{A}^\pm_t)| =\alpha \frac{(1+\varepsilon)}{1\pm\varepsilon\mu(\Omega_0)}.
	$$
	By~\eqref{e.25}, we get that 
	$$
	\lim_{n\to \infty}\lim_{t\to\infty}\frac{1}{t\mu(U_n)}|\log\mu(\tilde{A}^\pm_t)| = \alpha \frac{(1+\varepsilon)}{1\pm\varepsilon\mu(\Omega_0)}.
	$$
	
	For each $\varepsilon>0$ we can take $n_0$ large enough, such that for $n>n_0:$
	$$
	\alpha\frac{1+\varepsilon}{1\pm\varepsilon \mu(\Omega_0)}\mu(U_n) < \min\{c\varepsilon ,c_\varepsilon(1+\varepsilon)\}. 
	$$
	It then follows that the right-hand-side of \eqref{e.1} is of order $o(\mu(A^\pm_t)).$ We thus obtain
	$$
	\rho(\Lambda,\{U_n\})=\lim_{n\to\infty}\lim_{t\to\infty}\frac{1}{t\mu(U_n)}|\log\mu(\tau_{U_n}>t)| \in \left(\alpha\frac{(1+\varepsilon)}{1+\varepsilon\mu(\Omega_0)},\alpha\frac{(1+\varepsilon)}{1-\varepsilon\mu(\Omega_0)}\right)
	$$
	for every $\varepsilon>0$. This shows that $\rho(\Lambda, \{U_n\})=\alpha = \tilde\rho(\tilde\Lambda, \{\tilde U_n\})$.
\end{proof}

\begin{proof}[Proof of Theorem~\ref{m.5}]
	Young towers can be seen as discrete-time suspension over Gibbs-Markov maps. Moreover, the exponential tail  of $\mu(R>n)$ implies the exponential large deviation estimate (see for example~\cite{BDT} Appendix B). Therefore Theorem~\ref{m.5} immediately follows from Theorem~\ref{m.gibbsmarkov}, Theorem~\ref{m.3} and Theorem~\ref{t.suspension}.
\end{proof}

\section{Examples}\label{s.8}

\subsection{Periodic and non-periodic points dichotomy}
First we consider the case where $\Lambda = \{x\}$ is a singleton, and $U_n = B_{\delta_n}(x)$ is a sequence of balls shrinking to $x$. Alternatively one could take $\varphi(y) = g(d(y,x))$ for some function $g(x): \RR\to \RR\cup\{+\infty\}$ achieving its maximum at $0$ (for example, $g(y) = -\log y$) and let $u_n\nearrow\infty$ be a sequence of threshold tending to infinity. Then $U_n = \{y:\varphi(y) > u_n\}$ is a sequence of balls with diameter shrinking to zero.

This situation has been dealt with in~\cite{BDT} for certain interval maps, and in~\cite{HY} for maps that are polynomially $\phi$-mixing. A dichotomy is obtained: when $x$ is non-periodic the local escape rate is $1$; when $x$ is periodic then $\rho(x) = 1-\theta$ where 
\begin{equation}\label{e.theta}
\theta = \theta(x) = \lim_{n\to\infty} \frac{\mu(U_n\cap T^{-m}U_n)}{\mu(U_n)},
\end{equation}
where $m$ is the periodic of $x$. When $\mu$ is an equilibrium state for some potential function $h(x)$ with zero pressure, one has $\theta = e^{S_mh(x)}$ where $S_m$ is the Birkhoff sum. See~\cite{BDT}.

Note that if $x$ is non-periodic then one naturally deduces that $\pi(U_n)\nearrow\infty$ (see for example~\cite[Lemma 1]{HY}). When $x$ is periodic, in~\cite[Section 8.3]{HV19} it is shown that $\hat\alpha_\ell = \theta^{l-1}$ is a geometric distribution. In particular one has $\sum_\ell \ell\hat\alpha_\ell<\infty$ and $\alpha_1 = 1-\theta.$ This leads to the following theorem:

\begin{theorem}	
	Assume that 
	\begin{enumerate}
		\item either $\mu$ is right $\phi$-mixing with $\phi(k)\le Ck^{-p}$, $p>1$;
		\item or $(T,\mu,\cA)$ is a Gibbs-Markov system.
	\end{enumerate}
	Assume that $0<r_n<\delta_n$ satisfies 
	$$
	\mu(B_{\delta_n + r_n}(x)\setminus B_{\delta_n - r_n}(x)) = o(1) \mu(B_{\delta_n}(x)).
	$$
	Write $\kappa_n$ for the smallest positive integer with $\diam\cA^{\kappa_n}\le r_n$.  We assume that:
	\begin{enumerate}[label=(\alph*)]
		\item $\kappa_n\mu(U_n)^\varepsilon\to 0$ for some $\varepsilon \in (0,1)$;
		\item $U_n$ has small boundary: there exists $C>0$ and $p'>1$, such that \\$\mu\left(\bigcup_{A\in\cA^j, A\cap  B_{r_n}(\partial U_n) \ne\emptyset}A\right)\le C j^{-p'}$ for all $n$ and $j\le\kappa_n$.
		\item when $x$ is periodic with period $m$, $\theta$ defined by~\eqref{e.theta} exists.
	\end{enumerate}
	Then we have
	$$
	\rho(\{x\},\{B_{\delta_n}(x)\}) = \alpha_1 = \begin{cases}
	1&\mbox{if $x$ is non-periodic}\\
	1-\theta&\mbox{if $x$ is periodic}\end{cases}.
	$$
\end{theorem}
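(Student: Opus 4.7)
The plan is to reduce the claim to Theorem~\ref{m.3} applied to the sequence of balls $U_n = B_{\delta_n}(x)$, and then to identify $\alpha_1$ in each case of the dichotomy. The annulus estimate $\mu(B_{\delta_n+r_n}(x)\setminus B_{\delta_n-r_n}(x)) = o(1)\mu(B_{\delta_n}(x))$ is precisely Assumption~\ref{a.1} for this $U_n$, while conditions (a) and (b) of Theorem~\ref{m.3} are copied verbatim from the current hypothesis. Thus once I verify that the limits $\{\hat\alpha_\ell\}$ exist and satisfy $\sum_\ell \ell\hat\alpha_\ell < \infty$, Theorem~\ref{m.3} yields $\rho(\{x\}, \{B_{\delta_n}(x)\}) = \alpha_1$, and it only remains to compute $\alpha_1$.

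First I would handle the non-periodic case. A standard compactness-continuity argument (see e.g.~\cite[Lemma 1]{HY}) shows that $\pi(B_{\delta_n}(x)) \to \infty$ whenever $x$ is non-periodic: if some $k_0$ were realised as a return depth along a subsequence, passing to a limit would yield a period-$k_0$ point at $x$. Since in both settings the invariant measure has full support on open sets, $\pi_{\ess}(U_n) = \pi(U_n) \to \infty$, and Lemma~\ref{l.3.1} then gives $\hat\alpha_\ell \equiv 0$ for all $\ell \ge 2$. Consequently $\alpha_1 = 1 - \hat\alpha_2 = 1$, as required.

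For the periodic case, let $m$ denote the period of $x$. Short returns to $B_{\delta_n}(x)$ that contribute in the limit come only from multiples of $m$: returns at indices $k \not\equiv 0 \pmod m$ with $1 \le k \le K$ are ruled out because $T^k x \neq x$ together with continuity of $T$ forces $\mu(U_n \cap T^{-k}U_n)/\mu(U_n) \to 0$ for each such $k$. Iterating the definition of $\theta$ in~\eqref{e.theta} across successive $m$-periodic returns yields the geometric law $\hat\alpha_\ell = \theta^{\ell-1}$, already worked out in~\cite[Section 8.3]{HV19}. Provided $\theta < 1$, the summability $\sum_\ell \ell\theta^{\ell-1} = (1-\theta)^{-2} < \infty$ is automatic, and one reads off $\alpha_1 = \hat\alpha_1 - \hat\alpha_2 = 1 - \theta$.

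The main obstacle I anticipate is making the identification $\hat\alpha_\ell = \theta^{\ell-1}$ in the periodic case rigorous under the present right $\phi$-mixing or Gibbs-Markov hypotheses, rather than merely citing~\cite{HV19}: concretely, one must estimate $\mu_{U_n}(\tau^{\ell-1}_{U_n} \le K)$ for fixed $K$, partitioning return patterns according to how many of the $\ell-1$ returns occur at indices divisible by $m$ and showing that all non-periodic patterns contribute $o(1)$ relative to $\mu(U_n)^{\ell-1}$ as $n \to \infty$. With that identification in hand, feeding $\alpha_1$ into Theorem~\ref{m.3} closes the argument in both branches of the dichotomy.
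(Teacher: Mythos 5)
Your proposal follows exactly the route the paper takes: the theorem is proved by feeding the balls $U_n=B_{\delta_n}(x)$ into Theorem~\ref{m.3}, with the non-periodic case handled via $\pi(U_n)\to\infty$ (hence $\pi_{\ess}(U_n)\to\infty$ and Lemma~\ref{l.3.1} gives $\hat\alpha_\ell=0$ for $\ell\ge2$) and the periodic case via the geometric cluster law $\hat\alpha_\ell=\theta^{\ell-1}$ from~\cite[Section 8.3]{HV19}. The paper gives no further detail beyond these two citations, so your observation that one still needs $\theta<1$ for the summability $\sum_\ell\ell\hat\alpha_\ell<\infty$ required by Theorem~\ref{m.3} is a fair point, but it is an implicit hypothesis in the paper as well and does not constitute a gap in your argument relative to theirs.
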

This theorem improves~\cite[Theorem 2]{HY} by dropping the assumption $\theta < 1/2$. Also note that such results can be generalized to interval maps which can be modeled by Young's towers using Theorem~\ref{m.5}.

\subsection{Cantor sets for interval expanding maps}
For simplicity, below we will only consider the Cantor ternary set. However the argument below can be adapted to a large family of dynamically-defined Cantor set discussed in~\cite{FFRS} with only minor modification.

Consider the uniformly expanding map $T(x) = 3x \mod 1$ defined on the unit interval $[0,1]$. We take $\Lambda$ to be the ternary Cantor set on $[0,1]$, and define recursively:
$
U_0 = [0,1]; 
$
$U_{n+1}$ is obtained by removing the middle third of each connected component of  $U_{n}$. 
Then we have $\cap_n U_n = \Lambda$.

\begin{theorem}
	For the uniformly expanding map $T(x) = 3x\mod 1$ on $[0,1]$, the Cantor ternary set $\Lambda$ and the nested sets $\{U_n\}$, we have 
	$$
	\rho(\Lambda, \{U_n\}) = \frac13.
	$$
\end{theorem}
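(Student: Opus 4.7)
The plan is to verify the hypotheses of Theorem~\ref{m.gibbsmarkov} and to compute $\alpha_1$ explicitly. Equip $[0,1]$ with the ternary partition $\cA=\{[0,1/3),[1/3,2/3),[2/3,1]\}$; then $(T,\mu,\cA)$ is a Gibbs--Markov system, since every branch maps onto $[0,1)$, the Jacobian $JT^{-1}\equiv 1/3$ is constant (hence trivially log-Lipschitz), Lebesgue is $T$-invariant, and the big-image property is immediate. Identify $x\in[0,1]$ with its ternary expansion $x=0.d_1d_2\cdots$; then the $n$-cylinder of $x$ is determined by $(d_1,\ldots,d_n)$, and a short induction shows that $U_n$ coincides modulo null sets with $\{x:d_1,\ldots,d_n\in\{0,2\}\}$. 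In particular $U_n\in\sigma(\cA^n)$ with $\kappa_n=n$ and $\mu(U_n)=(2/3)^n$.

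Next I check that $\{U_n\}$ is a good neighborhood system. Item (1) of Definition~\ref{d.2} is immediate since $\kappa_n\mu(U_n)^{\varepsilon}=n(2/3)^{n\varepsilon}\to 0$ for any $\varepsilon>0$. For item (2), a $j$-cylinder with $j\le n$ meets $U_n$ iff its first $j$ digits all lie in $\{0,2\}$, so $U_n^j=U_j$, and
\[
\mu(U_n^j)-\mu(U_n)=(2/3)^j-(2/3)^n\le(2/3)^j,
\]
which decays faster than any polynomial in $j$.

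The heart of the argument is a short-return dichotomy for $x\in U_n$. From $T^k(x)\in U_n\iff d_{k+1},\ldots,d_{k+n}\in\{0,2\}$, one deduces: if $d_{n+1}\in\{0,2\}$ then $\tau_{U_n}(x)=1$, while if $d_{n+1}=1$ then for every $1\le k\le n$ the window $\{d_{k+1},\ldots,d_{k+n}\}$ contains position $n+1$ and therefore contains the digit $1$, which forces $\tau_{U_n}(x)>n$. Iterating this observation along successive returns, for any fixed $K$ and every $n>K$ the event $\{\tau^{\ell-1}_{U_n}\le K\}$ forces every intermediate return to equal $1$ (any larger return would already exceed $n>K$), which in turn is equivalent to $d_{n+1},\ldots,d_{n+\ell-1}\in\{0,2\}$ and requires $\ell-1\le K$. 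Since under Lebesgue the digits $d_{n+1},d_{n+2},\ldots$ are i.i.d.\ uniform on $\{0,1,2\}$ and independent of $(d_1,\ldots,d_n)$, it follows that
\[
\hat\alpha_\ell(K,U_n)=(2/3)^{\ell-1}\quad\text{for all }n>K\ge\ell-1,
\]
so $\hat\alpha_\ell=(2/3)^{\ell-1}$ and $\sum_\ell\ell\hat\alpha_\ell=9<\infty$.

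Applying Theorem~\ref{m.gibbsmarkov} then yields $\rho(\Lambda,\{U_n\})=\alpha_1=1-\hat\alpha_2=1/3$. I do not anticipate any genuine obstacle: all mixing-type analysis has been absorbed into Theorem~\ref{m.gibbsmarkov}, and the only nontrivial input is the dichotomy above, which is forced by the purely combinatorial fact that any two length-$n$ windows whose starting positions differ by at most $n$ must overlap at position $n+1$.
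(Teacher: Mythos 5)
Your proof is correct and follows essentially the same route as the paper's: both reduce the problem to identifying the short-return structure of the Cantor approximants $U_n$ over the ternary partition, verify the good-neighborhood conditions via $U_n^j=U_j$, and read off the answer from one of the main theorems. The one genuine (minor) difference: the paper only computes $\hat\alpha_2(K,U_n)=2/3$ via the identity $U_n\cap T^{-j}U_n=U_{n+j}$ and then invokes Proposition~\ref{p.4.9}, which is phrased in terms of the extremal index $\theta$ and therefore does not require the summability $\sum_\ell\ell\hat\alpha_\ell<\infty$. You instead apply Theorem~\ref{m.gibbsmarkov}, which does require that summability, and correctly supply it by computing the full sequence $\hat\alpha_\ell=(2/3)^{\ell-1}$ through the clean dichotomy that for $x\in U_n$ every return time is either $1$ or greater than $n$. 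This costs a little more work but gives a slightly more informative description of the cluster-size distribution (it confirms the geometric law, hence that the limiting process is P\'olya--Aeppli), and it exhibits the combinatorics driving the answer more transparently than the paper's direct set-theoretic identity. Either route is valid and they are interchangeable here.
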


\begin{proof}
Let $\cA = \{[0,1/3), [1/3, 2/3), [2/3, 1]\}$ be a Markov partition of $T$, with respect to which the Lebesgue measure $\mu$ is exponentially $\psi$-mixing.  Below we will verify the assumptions of Proposition~\ref{p.4.9}.

It is easy to see that $U_n \in \cA^n$, i.e., $\kappa_n = n$. On the other hand, $\mu(U_n) = 2^n/3^n$ which shows that item (1) of Definition~\ref{d.2} is satisfied for any $\varepsilon \in (0,1)$. For item (2), note that $U_n^j = U_j$
which implies that 
$$
\mu(U_n^j) \le \mu(U_n) + \mu(U_j) = \mu(U_n) + \left(\frac{2}{3}\right)^j.
$$
We conclude that $\{U_n\}$ is a good neighborhood system. 

The extremal index can be found as follows. Let us write
$U_n=\bigcup_{|\alpha|=n} J_\alpha$ where the disjoint union is over all $n$-words 
$\alpha=\alpha_1\alpha_2\dots\alpha_n\in\{0,2\}^n$ and 
$$
J_\alpha =\sum_{j=1}^n\frac{\alpha_j}{3^j}+3^{-n}I,
$$
where $I=[0,1]$ is the unit interval. The length $|J_\alpha|$ is equal to $3^{-n}$. 
For $j<n$
$$
T^{-j}J_\alpha=\bigcup_{\beta\in\{0,1,2\}^j} J_{\beta\alpha}
$$
(disjoint union), thus 
$$
U_n\cap T^{-j}U_n
=\bigcup_{\alpha\in\{0,2\}^n}\bigcup_{\beta\in\{0,2\}^j}J_{\beta\alpha}
$$
and therefore $U_n\cap T^{-j}U_n=U_{n+j}$. Consequently
$$
\{\tau_{U_n}\le K\}\cap U_n
=U_n\cap\bigcup_{j=1}^KT^{-j}U_n=U_{n+1}.
$$
Since $\mu(U_{n+j})=\!\left(\frac23\right)^j\mu(U_n)$
this implies that $\hat\alpha_2(K,U_n)=\frac{\mu(U_{n+1})}{\mu(U_n)}=\frac23$
and therefore $\alpha_1=\frac13$.

This result was recently shown in~\cite[Theorem 3.3]{FFRS} in more generality.
By Proposition~\ref{p.4.9} we conclude that $\rho(\Lambda, \{U_n\})$ = 1/3. 
\end{proof}

\subsection{Submanifolds of Anosov maps}\label{s.8.2}
In this section we consider the case where $\Lambda$ is a submanifold for some Anosov map $T$. More importantly, we will show how our results can be applied to those cases where the extremal index $\theta$ is defined using time cut-off $K_n$ that depends on $U_n$ (see~\eqref{e.extremal}).

Let $T = \begin{pmatrix}
2 & 1\\
1 & 1
\end{pmatrix}$ be an Anosov system on $\TT^2$ and $\mu$ be the Lebesgue measure. It is well known that $\mu$ is exponentially $\psi$-mixing with respect to its Markov partition $\cA$. Also denote by $\lambda>1$ the eigenvalue of $T$. Following~\cite{CHN} we take $\Lambda$ to be a line segment with finite length $l(\Lambda)$. We will lift $\Lambda$ to $\hat\Lambda\subset \RR^2$ and parametrize $\hat\Lambda$ by $p_1 + t v$ for some $p_1\in\RR^2$ and $t\in[0, l(\Lambda)]$. Write $p_2$ for the other end point of $\hat\Lambda$, that is, $p_2 = p_1 + l(\Lambda) v$.

Consider the function $\varphi_\Lambda(y) = -\log d(x, \Lambda)$ which achieves its maximum ($+\infty$) on $\Lambda$. Write $v^{*}, * = s, u$ for the unit vector along the stable and unstable direction respectively. Then we have:

\begin{theorem}
	For the sequence $\{u_n = \log n\}$, 
	\begin{enumerate}
		\item if $\Lambda$ is not aligned with the stable direction $v^s$ or the unstable direction $v^u$ then $\zeta(\varphi_\Lambda, \{u_n\}) = 1$;
		\item if $\Lambda$ is aligned with the unstable direction but $\{p_1 + tv^u, t\in\RR\}$ has no periodic point, then $\zeta(\varphi_\Lambda, \{u_n\}) = 1$;
		\item if $\Lambda$ is aligned with the stable direction but $\{p_1 + tv^s, t\in\RR\}$ has no periodic point, then $\zeta(\varphi_\Lambda, \{u_n\}) = 1$;
		\item $\Lambda$ is aligned with $v^{*}, *=s,u$ and $L$ contains a periodic point with prime period $q$, then $\zeta(\varphi_\Lambda, \{u_n\}) = 1 - \lambda^{-q}$;
		\item $\Lambda$ is aligned with the unstable direction $v^u$, $\Lambda$ has no periodic points but $\{p_1+tv^u, t\in\RR\}$ contains a periodic point of prime period $q$; if $\Lambda \cap T^{-q}\Lambda = \emptyset $ then $\zeta(\varphi_\Lambda, \{u_n\}) = 1$; if $\Lambda\cap T^{-q} \Lambda\ne\emptyset$ then $\zeta(\varphi_\Lambda, \{u_n\}) = (1 - \lambda^{-q})\frac{|p_2|}{l(\Lambda)}$;
		\item $\Lambda$ is aligned with the stable direction $v^u$, $\Lambda$ has no periodic points but $\{p_1+tv^u, t\in\RR\}$ contains a periodic point of prime period $q$; if $\Lambda \cap T^{-q}\Lambda = \emptyset $ then $\zeta(\varphi_\Lambda, \{u_n\}) = 1$; if $\Lambda\cap T^{-q} \Lambda\ne\emptyset$ then $\zeta(\varphi_\Lambda, \{u_n\}) = (1 - \lambda^{-q})\frac{|p_2|}{l(\Lambda)}$;
  	\end{enumerate}
\end{theorem}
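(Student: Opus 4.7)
The plan is to apply the open-set escape-rate results (Theorem~\ref{m.3} in the non-invertible formulation, or Theorem~\ref{t.leftmixing.open} in the invertible formulation, since $T$ is Anosov) to the tubular neighborhoods $U_n = \{y\in\TT^2: d(y,\Lambda)<1/n\}$. Because $u_n = \log n$ and $\varphi_\Lambda(y)=-\log d(y,\Lambda)$, we have exactly $U_n = \{\varphi_\Lambda > u_n\}$, so $\zeta(\varphi_\Lambda,\{u_n\})=\rho(\Lambda,\{U_n\})$. Lebesgue measure on $\TT^2$ is exponentially $\psi$-mixing with respect to the standard Markov partition $\cA$ of the cat map, so the mixing hypotheses hold at any polynomial rate. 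The entire task then reduces to computing the extremal index $\alpha_1$ (equivalently $\theta=1-\hat\alpha_2$) from the geometry of how the strip $U_n$ intersects its pre-iterates $T^{-k}U_n$.

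Before the case analysis I would verify the abstract hypotheses. Setting $r_n = n^{-2}$ makes $\kappa_n\sim 2\log n/\log\lambda$, which satisfies condition (a) for any $\varepsilon\in(0,1)$; the measure of the annulus $U^o_n\setminus U^i_n$ is $O(l(\Lambda)/n^2)=o(1)\mu(U_n)$, so Assumption~\ref{a.1} holds; and a cylinder cover of $B_{r_n}(\partial U_n)$ has total measure at most $O(\lambda^{-j})+O(l(\Lambda)/n^2)$, better than $Cj^{-p'}$ uniformly in $n$ for any $p'>1$, giving (b). After this, everything reduces to computing the extremal index.

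The key geometric input is the singular-value decomposition of $T^{-k}$, which stretches by $\lambda^k$ along $v^s$ and contracts by $\lambda^k$ along $v^u$. In case (1) the pre-image $T^{-k}\Lambda$ is asymptotically tangent to $v^s$ with length of order $\lambda^k$, while $T^{-k}U_n$ is a strip of transverse width of order $1/(n\lambda^k)$; transversal intersection with the $v$-directed strip $U_n$ gives $\mu(U_n\cap T^{-k}U_n)=O(1/(n^2\lambda^k))$, so $\mu_{U_n}(\tau_{U_n}^{\ell-1}\le K)\to 0$ as $n\to\infty$ for every fixed $K$, and hence $\hat\alpha_\ell=0$ for $\ell\ge 2$, yielding $\alpha_1=1$. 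Cases (2) and (3) are similar but now $T^{-k}\Lambda$ stays on a line parallel to $\Lambda$; the non-periodicity assumption guarantees that for each fixed $K$ the perpendicular distance between these parallel lines is bounded below by some $\delta_K>0$, hence the strips miss each other for $n$ large. In case (4) the line containing $\Lambda$ is invariant under $T^{-q}$, which fixes the periodic point, and $T^{-q}\Lambda\subset\Lambda$ has length $l(\Lambda)\lambda^{-q}$; an elementary area computation gives $\mu(U_n\cap T^{-q}U_n)/\mu(U_n)=\lambda^{-q}+o(1)$, and more generally $\mu_{U_n}(\tau_{U_n}^{\ell-1}\le K)\to\lambda^{-q(\ell-1)}$ for $K\ge q(\ell-1)$, producing the geometric family $\hat\alpha_\ell=\lambda^{-q(\ell-1)}$ and $\alpha_1=1-\lambda^{-q}$. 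Cases (5) and (6) are intermediate: only the sub-segment $\Lambda\cap T^{-q}\Lambda$ admits the periodic-type self-intersection, so the ratio picks up an additional factor equal to the relative length of this sub-segment, giving the stated pre-factor $|p_2|/l(\Lambda)$.

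The main obstacle will be verifying the cluster structure $\hat\alpha_\ell$ in cases (5) and (6), where the iterated self-intersections $\Lambda\cap T^{-q}\Lambda\cap\cdots\cap T^{-(\ell-1)q}\Lambda$ form a nested family that is no longer purely geometric, and showing the summability $\sum_\ell \ell\hat\alpha_\ell<\infty$. A related technical point is absorbing the ``stray'' intersections $T^{-k}\Lambda\cap\Lambda$ for $k$ not a multiple of $q$, which by the transverse estimate of case (1) contribute only $O(1/(n\lambda^k))$ to $\hat\alpha_\ell(K,U_n)$ and vanish in the double limit $n\to\infty$ followed by $K\to\infty$. Once $\hat\alpha_\ell$ is pinned down in each case, Theorem~\ref{m.3} (or Theorem~\ref{t.leftmixing.open}) delivers the claimed values of $\zeta(\varphi_\Lambda,\{u_n\})$.
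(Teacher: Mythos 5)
Your overall strategy is on the right track, but there are two points of confusion that, in the paper's own proof, are deliberately engineered away and that your write-up reintroduces.

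First, since $T$ is an Anosov diffeomorphism (invertible), the paper applies Theorem~\ref{t.leftmixing.open}, \emph{not} Theorem~\ref{m.3}. You name both, but then proceed to verify the hypotheses of Theorem~\ref{m.3}: condition~(b) on small boundaries and the summability $\sum_\ell \ell\hat\alpha_\ell<\infty$. Neither of these is required for Theorem~\ref{t.leftmixing.open}. That theorem needs only $\kappa_n\mu(U_n)^\varepsilon\to0$ (which you verify correctly) and the existence of the single number $\theta$ defined by~\eqref{e.extremal} with an $n$-dependent cut-off $K_n>\kappa_n^2$. In particular, the ``main obstacle'' you flag at the end --- establishing the full cluster-size spectrum $\hat\alpha_\ell$ and its summability in cases (5) and (6) --- is a non-issue once you commit to Theorem~\ref{t.leftmixing.open}: you only need $\theta=\lim_n\mu_{U_n}(\tau_{U_n}>K_n)$, not the whole family $\{\hat\alpha_\ell\}$. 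The reason this matters structurally is exactly Remark~\ref{r.invertible}: for invertible systems the cylinder-approximation machinery behind condition (b) and the $\hat\alpha_\ell$ argument is delicate, and Theorem~\ref{t.leftmixing.open} exists precisely to sidestep it. Using $K_n=(\log n)^5\gg\kappa_n^2$ eliminates the troublesome term~III from~\eqref{e.III} altogether.

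Second, the paper does not re-derive the extremal-index computations from scratch the way you do: it cites~\cite[Theorem~2.1]{CHN}, case by case, for the value of $\theta$, and for case~(1) cites~\cite[Section~3.3, page~16]{CHN} for the short-return estimate $\sum_{j\le(\log n)^5}\mu(U_n\cap T^{-j}U_n)=o(1/n)$. Your geometric sketch is broadly consistent with what CHN prove, and for cases (1)--(4) your reasoning about transverse versus parallel pre-images is plausible (though note your bound $\mu(U_n\cap T^{-k}U_n)=O(1/(n^2\lambda^k))$ ignores the $O(\lambda^k)$ wrapping of $T^{-k}\Lambda$ around the torus; the correct uniform bound $O(l(\Lambda)^2/n^2)$ still yields $\theta=1$). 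For cases (5) and (6) your hand-wave ``the ratio picks up an additional factor equal to the relative length of the overlap sub-segment'' does not obviously produce the stated formula $(1-\lambda^{-q})|p_2|/l(\Lambda)$ --- a naive ``fraction-returns'' decomposition would give $1-\lambda^{-q}\cdot|p_2|/l(\Lambda)$ instead, so the delicate part of the CHN computation is not recoverable from your sketch. In short: your route is genuinely more self-contained than the paper's (which leans on CHN as a black box), but it carries more verification burden than necessary because you are checking the wrong theorem's hypotheses, and the hardest geometric step remains unargued.
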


\begin{proof}
	We will only prove case (1), in which we will need the result of \cite[Theorem 2.1 (1)]{CHN}. The other cases use similar arguments and correspond to case (2) to (6) of~\cite[Theorem 2.1]{CHN}.

	Below we verify the assumptions of Theorem~\ref{t.leftmixing.open}.
	
	Put $\delta_n = e^{-u_n}$. Then we see that $U_n = \{y: \varphi_\Lambda(y) > u_n\} = B_{\delta_n}(\Lambda)$. 
	Since $\mu$ is the Lebesgue measure, it is straight forward to verify that Assumption 1 is satisfied with 
	$r_n = \delta_n^2 = e^{-2 u_n}$. 
	
	By the hyperbolicity of $T$, there exists $C>0$ such that $\diam \cA^n < C\lambda^{-n}$. This invites us to take
	$$
	\kappa_n = \floor{\frac{\ln C + 2u_n}{\ln \lambda}}+1 = \cO(\log n)
	$$
	which guarantees that $\diam\cA^{\kappa_n} < r_n$.
	On the other hand, $\mu(U_n)\lesssim e^{-u_n}l(\Lambda) = \cO(1/n)$, so item (i) of Theorem~\ref{t.leftmixing.open} is satisfied for any $\varepsilon \in (0,1)$.
	
	We are left with the extremal index $\theta$ defined by~\eqref{e.extremal}. For this purpose we choose $K_n = (\log n)^5 \gg \kappa_n^2$. Now we estimate:
	\begin{align*}
		\mu_{U_n}(\tau_{U_n} \le K_n)\le& \frac{1}{\mu(U_n)} \sum_{j=1}^{(\log n)^5} \mu(U_n\cap T^{-j} U_n )\\
		\lesssim&\, n\sum_{j=1}^{(\log n)^5} \mu(U_n\cap T^{-j} U_n )\\
		 =&\, o(1)
	\end{align*}
	where the last inequality follows from~\cite[Section 3.3, page 16]{CHN}. This shows that 
	$$
	\theta = \lim_n\mu_{U_n}(\tau_{U_n} >  K_n) = 1- \lim_n\mu_{U_n}(\tau_{U_n} \le K_n) = 1,
	$$
	finishing the proof of (ii) of Theorem~\ref{t.leftmixing.open}. We conclude that 
	$$
	\zeta(\varphi, \{\log n\}) = \theta = 1.
	$$

\end{proof}


\end{document}